\newtheorem{thm}{Theorem}[section]
\newtheorem{df}{Definition}[section]
\newtheorem{rem}{Remark}[section]
\newtheorem{prp}{Proposition}[section]
\newtheorem{lem}{Lemma}[section]
\newtheorem{cor}{Corollary}[section]
\newcommand{\mathsym}[1]{{}}
\newcommand{\unicode}[1]{{}}
\begin{document}

\title{Boolean polynomial expression of the discrete average on the space of rhythms}
\author{Fumio HAZAMA\\Tokyo Denki University\\Hatoyama, Hiki-Gun, Saitama JAPAN\\
e-mail address:hazama@mail.dendai.ac.jp\\Phone number: (81)49-296-5267}
\date{\today}
\maketitle
\thispagestyle{empty}

\begin{abstract}
An infinite family of Boolean polynomials which correspond to the discrete average maps, defined in [2], is constructed.  Each member $Bav_N^0$ of the family is found to be balanced. A recursive formula, which computes $Bav_{N+1}^0$ from $Bav_N^0$, is provided.
\end{abstract}

\noindent
$\mathbf{Keywords}$: Boolean polynomial, space of rhythms, discrete average, Boolean average, balanced polynomial\\
MSC 2010: 05E18, 06E30, 37P25.\\

\newpage
\tableofcontents

\newpage
\section{Introduction}
The main purpose of this paper is to construct a Boolean counterpart of the discrete average map $Rav$, investigated in  [2], and to study its algebraic and combinatorial properties. The map $Rav$ is defined on a space $\mathbf{R}_N^n$ of rhythms with $N$ beats and with $n$ onsets, and it is shown that an arbitrary rhythm in $\mathbf{R}_N^n$ can be made smooth by a finite number of iterations of $Rav$. An observation which motivates our study in the present paper is that a certain quotient space $\overline{\mathbf{R}}_N^n$ of $\mathbf{R}_N=\cup_{n=0}^N\mathbf{R}_N^n$ is found to be naturally isomorphic to the Boolean space $\mathbf{B}_N=\mathbf{F}_2^N$, and that the discrete average descends to a self map on $\overline{\mathbf{R}}_N^n$. It follows that there should be a Boolean counterpart $Bav:\mathbf{B}_N\rightarrow\mathbf{B}_N$, which might shed light on the true nature of $Rav$ from the Boolean viewpoint. We do \underline{not} assume the reader's acquaintance with any theory of music. We will encounter some unexpected algebraic phenomena which might be entitled to be investigated independently of its rhythm-theoretical origin. \\
As a self map on $\mathbf{B}_N$, the Boolean average $Bav$ consists of $N$ Boolean functions. We show, however, that only one of them, called $Bav_N^0$, determines the other $N-1$ functions completely. Furthermore we derive a formula which expresses $Bav_N^0$ as a sum of a simple-looking products (Theorem 5.2), and obtain a recurrence formula which computes $Bav_N^0$ from $Bav_{N-1}^0$ (Theorem 5.3). As a consequence, we see that the Boolean polynomial $Bav_N^0$ is {\it balanced}. According to [4, Chapter 3.3], ``Boolean functions in cryptographic applications almost always need to be balanced, ... .'' Our polynomials $Bav_N^0$ for $N\geq 3$ provide us unexpectedly with an infinite family of balanced ones.   \\
The plan of this paper is as follows. In Section one, we recall the definition of the space $\mathbf{R}_N^n$ of rhythms and of the discrete average $Rav$ on $\mathbf{R}_N^n$. The space $\mathbf{R}_N^n$ is defined to be a subset of the self-product $\mathbf{Z}_N^n$ of $\mathbf{Z}_N=\{0,1,\cdots, N-1\}$ equipped with the addition and subtraction operations inherited from those on the abelian group $\mathbf{Z}/N\mathbf{Z}$. Thus rhythms are originally inhabitants of the ``modulo-$N$-world.'' Since our investigation in this article is based on the fact that $Rav$ preserves the space $\mathbf{R}_N^n$, we give a simplified proof of this fact here. In Section two, we construct a bridge from this modulo-$N$-world to the Boolean world by introducing a certain equivalence relation on $\mathbf{R}_N$, which is compatible with the map $Rav$. The quotient space $\overline{\mathbf{R}}_N^n$ by this relation is found to be naturally isomorphic to the Boolean space $\mathbf{B}_N$, and hence we obtain Boolean counterpart $Bav_N$, called {\it Boolean average}, on $\mathbf{B}_N$. Section three is devoted to deduce several functorial properties of the Boolean average, which enable us to show that only one of the coordinate $Bav_N^0$ of the self map $Bav_N$ is enough to determine the other coordinates. In Section four we recall a standard way to translate a given propositional function to a Boolean polynomial. By this method one can express the $N$ coordinates of $Bav$ as Boolean polynomials. However several examples of the polynomial expressions of $Bav_N^0$ for small $N$ are listed in Table 4.3, they do not give any clue for our attempt to express the polynomials in a unified way. This obstacle leads us to change the basic set of indices $\mathbf{Z}_N$ of Boolean vectors to the set $\mathbf{Z}_{N,\pm}$ of the least absolute remainders modulo $N$. Thanks to the transition to $\mathbf{Z}_{N,\pm}$ we obtain in Section five a simple formula which expresses the polynomial $Bav_N^0$. In order to formulate the result we introduce the notion of a "parental pair of zero", and that of an "ancestor of zero" for which $Bav_N^0$ takes value 1. By determining the structure of the set of ancestors of zero, we find an explicit form of $Bav_N^0$ as well as some recurrence formulas. Furthermore we derive from the structure of the set of the ancestors of zero that $Bav_N^0$ is always balanced for any $N\geq 3$.

\section{The discrete average on the space of rhythms $\mathbf{R}_N^n$}
In this section we recall some definitions in our previous paper [2], and reformulate them in more geometric language. Throughout the present article we fix an integer $N$ which is assumed to be greater than or equal to 3.

\subsection{$\mathbf{Z}_N$-interval and $S^1$-interval}
Let $\mathbf{Z}_N=\{0,1,\cdots, N-1\}$. We denote by $\langle n\rangle_N$ the least nonnegative remainder of an integer $n$ divided by $N$. We define the addition "$+_N$" and the difference "$-_N$" on $\mathbf{Z}_N$ by the rules
\begin{eqnarray*}
a+_Nb&=&\langle a+b\rangle_N,\\
a-_Nb&=&\langle a-b\rangle_N,
\end{eqnarray*}
for any $a,b\in\mathbf{Z}_N$. For any pair $a,b\in\mathbf{Z}_N$, we define the $\mathbf{Z}_N$-{\it interval } $[a,b]_{\mathbf{Z}_N}$ by
\begin{eqnarray*}
[a,b]_{\mathbf{Z}_N}=\left\{
\begin{array}{ll}
\{a,a+1,\cdots,b\}, & \mbox{if $a\leq b$},\\
\{a,a+1,\cdots,N-1,0,1,\cdots,b\}, & \mbox{if $a>b$},
\end{array}
\right.
\end{eqnarray*}
and define half-closed intervals by
\begin{eqnarray*}
[a,b)_{\mathbf{Z}_N}&=&[a,b]_{\mathbf{Z}_N}\setminus\{b\},\\ 
(a,b]_{\mathbf{Z}_N}&=&[a,b]_{\mathbf{Z}_N}\setminus\{a\}.
\end{eqnarray*}
Therefore the number $|[a,b)_{\mathbf{Z}_N}|$ of the elements in $[a,b)_{\mathbf{Z}_N}$ is given by 
\begin{eqnarray}
|[a,b)_{\mathbf{Z}_N}|=b-_Na.
\end{eqnarray}
We call the number $b-_Na$ the $\mathbf{Z}_N$-{\it distance from} $a$ {\it to} $b$, and denote it by $d_{\mathbf{Z}_N}(a,b)$. Note that, when $a\neq b$, the distance satisfies the equality
\begin{eqnarray*}
d_{\mathbf{Z}_N}(a,b)=N-d_{\mathbf{Z}_N}(b,a),
\end{eqnarray*}
hence it is \underline{not} necessarily symmetric. These two notions, $\mathbf{Z}_N$-interval and $\mathbf{Z}_N$ distance, are translated into geometric ones through a character on the additive group $\mathbf{R}$ as follows. Let $S^1=\{z\in\mathbf{C}:|z|=1\}\subset \mathbf{C}$ be the unit circle in the complex plane, and let $\chi_N:\mathbf{R}\rightarrow \mathbf{C}^*=\mathbf{C}\setminus\{0\}$ denote the additive character defined by $\chi_N(t)=\exp(2\pi it/n)$ for any $t\in\mathbf{R}$. Note that $\chi_N(\mathbf{R})=S^1$ and that $\chi_N(\mathbf{Z}_N)=\mu_N$, where $\mu_N$ denotes the group of the $N$-th roots of unity. For any pair $(z,w)\in S^1\times S^1$, we define the $S^1$-{\it interval} $[z,w]_{S^1}\subset S^1$ to be the counterclockwise closed circular arc from $z$ to $w$, and put 
\begin{eqnarray*}
[z,w)_{S^1}&=&[z,w]_{S^1}\setminus \{w\},\\
(z,w]_{S^1}&=&[z,w]_{S^1}\setminus \{z\}.
\end{eqnarray*}
 Let $d_{S^1}(z,w)$ denote the length of the arc $[z,w]_{S^1}$. We call $d_{S^1}(z,w)$ the $S^1$-{\it distance from }$z$ {\it to} $w$. Note that we have
\begin{eqnarray*}
d_{S^1}(z,w)=2\pi-d_{S^1}(w,z),
\end{eqnarray*}
for any distinct pair of elements $z,w\in S^1$. The $\mathbf{Z}_N$-distance and the $S^1$-distance are related by the equation
\begin{eqnarray}
d_{S^1}(\chi_N(a),\chi_N(b))=\frac{2\pi}{N}d_{\mathbf{Z}_N}(a,b),
\end{eqnarray}
for any $(a,b)\in\mathbf{Z}_N\times\mathbf{Z}_N$. For any pair $(z,w)\in \mu_N\times \mu_N$, we define the $\mu_N$-{\it interval} $[z,w]_{\mu_N}$ by 
\begin{eqnarray*}
[z,w]_{\mu_N}&=&[z,w]_{S^1}\cap \mu_N,
\end{eqnarray*}
and let  
\begin{eqnarray*}
[z,w)_{\mu_N}&=&[z,w]_{\mu_N}\setminus \{w\},\\
(z,w]_{\mu_N}&=&[z,w]_{\mu_N}\setminus \{z\}.
\end{eqnarray*}
Note that, for any $(a,b)\in\mathbf{Z}_N\times \mathbf{Z}_N$, the map $\chi_N$ defines a bijection from the $\mathbf{Z}_N$-interval $[a,b]_{\mathbf{Z}_N}$ to the $\mu_N$-interval $[\chi_N(a),\chi_N(b)]_{\mu_N}$.

\subsection{$\mathbf{Z}_N$-average and $\mu_N$-average}
First we define a map $av_{\mathbf{Z}_N}:\mathbf{Z}_N\times\mathbf{Z}_N\rightarrow \mathbf{Z}_N$ by the rule
\begin{eqnarray}
av_{\mathbf{Z}_N}(a,b)=a+_N\left\lfloor\frac{b-_Na}{2}\right\rfloor
\end{eqnarray}
for any $(a,b)\in \mathbf{Z}_N\times\mathbf{Z}_N$. However we use the notation ``$rav$'' in [2] to express this map $av_{\mathbf{Z}_N}$, we rename it in this paper in order to contrast it with a corresponding average on $\mu_N$. The latter is defined as follows. For any $(z,w)\in S^1\times S^1$, we denote the midpoint of the circular arc $[z,w]_{S^1}$ by $av_{S^1}(z,w)$, and call it $S^1$-{\it average} of $z$ and $w$. Furthermore a kind of {\it rounding-off} operator $\lfloor\cdot\rfloor_{\mu_N}$ on $S^1$ is introduced as follows. Note that we have the equality
\begin{eqnarray*}
S^1=\bigsqcup_{k=0}^{N-1}[\zeta_N^k,\zeta_N^{k+1})_{S^1},
\end{eqnarray*}
where $\zeta_N=\chi_N(1)$. Hence for any $z\in S^1$, there exists a unique $k\in \mathbf{Z}_N$ such that $z\in [\zeta_N^k,\zeta_N^{k+1})_{S^1}$. We put $\lfloor z\rfloor_{\mu_N}=\zeta_N^k$ employing this $k$. The $\mu_N$-{\it average} $av_{\mu_N}(z,w)$ of $(z,w)\in\mu_N\times\mu_N$ is define by 
\begin{eqnarray*}
av_{\mu_N}(z,w)=\lfloor av_{S^1}(z,w)\rfloor_{\mu_N}.
\end{eqnarray*}
The following proposition shows that the $\mathbf{Z}_N$-average corresponds to the $\mu_N$-average through the bijection $\chi_N|_{\mathbf{Z}_N}$.

\begin{prp}
For any $(a,b)\in\mathbf{Z}_N\times\mathbf{Z}_N$, we have
\begin{eqnarray}
\chi_N(av_{\mathbf{Z}_N}(a,b))=av_{\mu_N}(\chi_N(a),\chi_N(b)),
\end{eqnarray}
namely, the diagram
\begin{eqnarray*}
  \begin{CD}
     \mathbf{Z}_N\times\mathbf{Z}_N @>{\chi_N\times\chi_N}>> \mu_N\times\mu_N \\
  @V{av_{\mathbf{Z}_N}}VV @V{av_{\mu_N}}VV \\
      \mathbf{Z}_N @>{\chi_N}>> \mu_N \\
  \end{CD}
\end{eqnarray*}
commutes.
\end{prp}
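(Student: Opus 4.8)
The plan is to track both sides of the asserted identity through the defining chain $\mathbf{Z}_N\to S^1\to\mu_N$ and reduce the whole statement to a single floor computation. Fix $(a,b)\in\mathbf{Z}_N\times\mathbf{Z}_N$ and abbreviate $d=d_{\mathbf{Z}_N}(a,b)=b-_Na$, so that $0\leq d\leq N-1$. First I would pass to $S^1$: by the distance relation $d_{S^1}(\chi_N(a),\chi_N(b))=\frac{2\pi}{N}d_{\mathbf{Z}_N}(a,b)$ established just above, the counterclockwise closed arc $[\chi_N(a),\chi_N(b)]_{S^1}$ has length $\frac{2\pi d}{N}$, and its midpoint $av_{S^1}(\chi_N(a),\chi_N(b))$ is the point reached from $\chi_N(a)=\exp(2\pi i a/N)$ by travelling counterclockwise along half of that length. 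Hence
\begin{eqnarray*}
av_{S^1}(\chi_N(a),\chi_N(b))=\exp\left(\frac{2\pi i a}{N}+\frac{\pi i d}{N}\right)=\exp\left(\frac{\pi i (2a+d)}{N}\right).
\end{eqnarray*}

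The heart of the argument is then to apply the rounding operator $\lfloor\cdot\rfloor_{\mu_N}$ to this point. For an arbitrary $\zeta=\exp(i\theta)\in S^1$ with $0\leq\theta<2\pi$, the partition $S^1=\bigsqcup_{k=0}^{N-1}[\zeta_N^k,\zeta_N^{k+1})_{S^1}$ places $\zeta$ in the cell indexed by $k=\lfloor N\theta/(2\pi)\rfloor$, so that $\lfloor\zeta\rfloor_{\mu_N}=\zeta_N^k$. Taking $\theta=\pi(2a+d)/N$ (reduced modulo $2\pi$) gives $N\theta/(2\pi)=a+d/2$, and since $a$ is an integer I would compute
\begin{eqnarray*}
\left\lfloor a+\frac{d}{2}\right\rfloor=a+\left\lfloor\frac{d}{2}\right\rfloor.
\end{eqnarray*}
Therefore $av_{\mu_N}(\chi_N(a),\chi_N(b))=\zeta_N^{\,a+\lfloor d/2\rfloor}$. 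On the other hand the definition of the $\mathbf{Z}_N$-average gives $av_{\mathbf{Z}_N}(a,b)=a+_N\lfloor d/2\rfloor=\langle a+\lfloor d/2\rfloor\rangle_N$, whence $\chi_N(av_{\mathbf{Z}_N}(a,b))=\zeta_N^{\,a+\lfloor d/2\rfloor}$ as well, and the two sides coincide; the modular reduction in the exponent is harmless because $\zeta_N^k$ depends only on $k$ modulo $N$.

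The step I expect to require the most care is the rounding when $d$ is even, for then the midpoint $\exp(\pi i(2a+d)/N)$ lands exactly on the root of unity $\zeta_N^{a+d/2}$. Here one must invoke the half-open convention $[\zeta_N^k,\zeta_N^{k+1})_{S^1}$, which \emph{includes} its left endpoint, to guarantee that a lattice point is rounded to itself rather than to a neighbour; this is precisely what makes $\lfloor N\theta/(2\pi)\rfloor=a+d/2$ come out correctly when $d/2$ is an integer. The degenerate case $a=b$ (so $d=0$, the arc collapsing to a point) should be noted separately but is immediate, since both sides then return $\chi_N(a)$. Apart from this boundary bookkeeping, the proof is a direct unwinding of the definitions of $av_{S^1}$ and $\lfloor\cdot\rfloor_{\mu_N}$ together with the already-established relation between the $\mathbf{Z}_N$-distance and the $S^1$-distance.
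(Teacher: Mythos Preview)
Your argument is correct. The only point that could be written more tightly is the modular bookkeeping in the rounding step: once you reduce $\theta_0=\pi(2a+d)/N$ to $\theta=\theta_0-2\pi m\in[0,2\pi)$, the quantity $N\theta/(2\pi)$ equals $a+d/2-Nm$, and because $a$ and $Nm$ are integers its floor is exactly $a+\lfloor d/2\rfloor-Nm$; the exponent of $\zeta_N$ then agrees with $a+\lfloor d/2\rfloor$ modulo $N$, as you note. Your treatment of the boundary case (even $d$, midpoint landing on a lattice point) via the left-closed convention is the right justification; the separate remark on $a=b$ is unnecessary since your uniform computation already covers $d=0$.

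The paper proceeds differently. It first isolates an auxiliary Lemma~1.1 expressing $av_{\mathbf{Z}_N}(a,b)$ in closed form---$\lfloor(a+b)/2\rfloor$ when $a\le b$ and $\langle\lfloor(a+b+N)/2\rfloor\rangle_N$ when $a>b$---and then, in each of these two cases, identifies $av_{S^1}(\chi_N(a),\chi_N(b))$ with $\chi_N((a+b)/2)$ (respectively $\chi_N((a+b+N)/2)$) and rounds. Your route bypasses both the case split and the intermediate lemma by parametrising everything through the single quantity $d=b-_Na$, which lets the definition $av_{\mathbf{Z}_N}(a,b)=a+_N\lfloor d/2\rfloor$ match the rounded midpoint directly. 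This is cleaner and slightly shorter; the paper's approach, in exchange, produces the explicit closed formula of Lemma~1.1 as a byproduct, though that formula is not used elsewhere.
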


\begin{proof}
For the proof, we employ the following lemma.

\begin{lem}
For any $(a,b)\in\mathbf{Z}_N\times \mathbf{Z}_N$, we have
\begin{eqnarray}
av_{\mathbf{Z}_N}(a,b)=
\left\{
\begin{array}{ll}
\left\lfloor\frac{a+b}{2}\right\rfloor, & \mbox{if }a\leq b,\\
\langle\left\lfloor\frac{a+b+N}{2}\right\rfloor\rangle_N, & \mbox{if }a>b.\\
\end{array}
\right.
\end{eqnarray}
\end{lem}

\noindent
{\it Proof of Lemma} 1.1. Suppose that $a\leq b$. Then we have
\begin{eqnarray*}
\chi_N(av_{\mathbf{Z}_N}(a,b))&=&\chi_N(a+_N\left\lfloor\frac{b-_Na}{2}\right\rfloor)\\
&=&\chi_N(a)\chi_N(\left\lfloor\frac{b-a}{2}\right\rfloor)\\
&=&\chi_N(a+\left\lfloor\frac{b-a}{2}\right\rfloor)\\
&=&\chi_N(\left\lfloor\frac{a+b}{2}\right\rfloor).
\end{eqnarray*}
Since $0\leq a\leq b \leq N-1$, we have $0\leq \left\lfloor\frac{a+b}{2}\right\rfloor\leq N-1$, and hence the assertion (1.5) holds by the bijectivity of $\chi_N|_{\mathbf{Z}_N}$. When $a>b$, we have $b-_Na=b-a+N$. Hence we have
\begin{eqnarray*}
\chi_N(av_{\mathbf{Z}_N}(a,b))&=&\chi_N(a+_N\left\lfloor\frac{b-_Na}{2}\right\rfloor)\\
&=&\chi_N(a)\chi_N(\left\lfloor\frac{b-a+N}{2}\right\rfloor)\\
&=&\chi_N(a+\left\lfloor\frac{b-a+N}{2}\right\rfloor)\\
&=&\chi_N(\left\lfloor\frac{a+b+N}{2}\right\rfloor).
\end{eqnarray*}
This time we need to take the remainder modulo $N$ of $\left\lfloor\frac{a+b+N}{2}\right\rfloor$, which need not be smaller than $N$. This finishes the proof of Lemma 1.1.\\
With the help of this lemma, we can prove Proposition 1.1. When $a\leq b$, by the definition of $av_{S^1}$, we see that $av_{S^1}(\chi_N(a),\chi_N(b))=\chi_N(\frac{a+b}{2})$, since the usual average $(a+b)/2$ is the midpoint of the interval $[a,b]\subset\mathbf{R}$. Therefore $av_{\mu_N}(\chi_N(a),\chi_N(b))=\chi_N(\lfloor \frac{a+b}{2}\rfloor)$, which is equal to $\chi_N(av_{\mathbf{Z}_N}(a,b))$ by Lemma 1.1.  The case when $a>b$ can be treated similarly also by Lemma 1.1. 
\end{proof}

\noindent
As a consequence, we can prove the following.

\begin{prp}
(1) For any $(z,w)\in\mu_N\times\mu_N$, we have
\begin{eqnarray*}
av_{\mu_N}(z,w)\in [z,w]_{\mu_N}.
\end{eqnarray*}
(2) For any $(a,b)\in\mathbf{Z}_N\times\mathbf{Z}_N$, we have
\begin{eqnarray*}
av_{\mathbf{Z}_N}(a,b)\in [a,b]_{\mathbf{Z}_N}.
\end{eqnarray*}
\end{prp}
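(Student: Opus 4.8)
The plan is to prove part (2) directly from the definition of $av_{\mathbf{Z}_N}$ together with a combinatorial description of the $\mathbf{Z}_N$-interval, and then to obtain part (1) from part (2) by transporting the membership statement through the bijection $\chi_N|_{\mathbf{Z}_N}$. In fact the two parts are equivalent: Proposition 1.1 gives $\chi_N(av_{\mathbf{Z}_N}(a,b))=av_{\mu_N}(\chi_N(a),\chi_N(b))$, and the remark at the end of Subsection 1.1 states that $\chi_N$ carries $[a,b]_{\mathbf{Z}_N}$ bijectively onto $[\chi_N(a),\chi_N(b)]_{\mu_N}$, so a membership assertion on one side corresponds exactly to the one on the other.

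First I would set $d=d_{\mathbf{Z}_N}(a,b)=b-_Na$, so that $d$ is an honest integer with $0\le d\le N-1$. The key preliminary observation is the parametrized description
\[
[a,b]_{\mathbf{Z}_N}=\{\,a+_Nk : 0\le k\le d\,\},
\]
obtained by unwinding the definition of the $\mathbf{Z}_N$-interval. When $a\le b$ the index $k$ runs without wraparound and reproduces $\{a,\dots,b\}$; when $a>b$ one has $d=b-a+N$ and the index $k$ wraps once around $\mathbf{Z}_N$, reproducing $\{a,\dots,N-1,0,\dots,b\}$. In both cases this reparametrization is consistent with the element count $|[a,b)_{\mathbf{Z}_N}|=d$ recorded in equation (1.1).

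Granting this description, part (2) is immediate. By equation (1.3) we have $av_{\mathbf{Z}_N}(a,b)=a+_N\lfloor d/2\rfloor$, and since $d\ge 0$ the elementary inequality $0\le\lfloor d/2\rfloor\le d$ holds; hence $av_{\mathbf{Z}_N}(a,b)$ is of the form $a+_Nk$ with $0\le k\le d$ and therefore lies in $[a,b]_{\mathbf{Z}_N}$. For part (1) I would then write an arbitrary $(z,w)\in\mu_N\times\mu_N$ as $(\chi_N(a),\chi_N(b))$ for a unique $(a,b)\in\mathbf{Z}_N\times\mathbf{Z}_N$, invoke Proposition 1.1 to get $av_{\mu_N}(z,w)=\chi_N(av_{\mathbf{Z}_N}(a,b))$, and finally apply part (2) together with $\chi_N([a,b]_{\mathbf{Z}_N})=[z,w]_{\mu_N}$ to conclude $av_{\mu_N}(z,w)\in[z,w]_{\mu_N}$.

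The only point that demands care, rather than a genuine obstacle, is justifying the displayed set description uniformly across the two cases $a\le b$ and $a>b$, that is, checking that the additive index $k$ tracks the circular structure correctly through the single wraparound. Once this bookkeeping is settled, both parts collapse onto the trivial inequality $0\le\lfloor d/2\rfloor\le d$, and no further estimation is required.
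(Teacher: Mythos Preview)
Your proof is correct but proceeds in the opposite direction from the paper. The paper first establishes (1) geometrically---observing that the $S^1$-midpoint $av_{S^1}(z,w)$ of the arc $[z,w]_{S^1}$ lies in that arc by definition, and that the rounding operator $\lfloor\cdot\rfloor_{\mu_N}$ cannot carry it past the endpoint $z\in\mu_N$---and then derives (2) from (1) via Proposition~1.1 and the bijection $\chi_N$ between $[a,b]_{\mathbf{Z}_N}$ and $[\chi_N(a),\chi_N(b)]_{\mu_N}$. You instead prove (2) directly by parametrizing $[a,b]_{\mathbf{Z}_N}$ as $\{a+_Nk:0\le k\le d\}$ with $d=b-_Na$ and observing that $\lfloor d/2\rfloor$ lies in the admissible range, then push (2) forward to (1) through the same bijection. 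Your argument is purely combinatorial and self-contained, avoiding the circle geometry altogether; the paper's argument is shorter once the geometric picture is accepted, but it rests on the informal clause ``the rounding-off operator cannot go beyond $z\in\mu_N$,'' a step your route renders unnecessary.
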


\begin{proof}
(1) The midpoint $av_{S^1}(z,w)$ belongs to $[z,w]_{S^1}$ by definition, and hence $av_{\mu_N}(z,w)=\lfloor av_{S^1}(z,w)\rfloor_{\mu_N}\in [z,w]_{\mu_N}$, since the rounding-off operator cannot go beyond $z\in\mu_N$.\\
(2) Let $c=av_{\mathbf{Z}_N}(a,b)$. Then if follows from (1.4) that 
\begin{eqnarray*}
\chi_N(c)=av_{\mu_N}(\chi_N(a),\chi_N(b)),
\end{eqnarray*}
and hence
\begin{eqnarray}
\chi_N(c)\in [\chi_N(a),\chi_N(b)]_{\mu_N}
\end{eqnarray}
by the above (1). Since $\chi_N$ is a bijection from $[a,b]_{\mathbf{Z}_N}$ onto $[\chi_N(a),\chi_N(b)]_{\mu_N}$, the statement (1.6) implies that $c=av_{\mathbf{Z}_N}(a,b)\in [a,b]_{\mathbf{Z}_N}$. This completes the proof.
\end{proof}

\noindent
Noting that the midpoint of $[z,w]_{S^1}$ coincides with $w$ only when $z=w$, we obtain the following.
\begin{cor}
(1) For any pair $(z,w)$ of \underline{distinct} elements of $\mu_N$, we have
\begin{eqnarray*}
av_{\mu_N}(z,w)\in [z,w)_{\mu_N}.
\end{eqnarray*}
(2) For any pair $(a,b)$ of \underline{distinct} elements of $\mathbf{Z}_N$, we have
\begin{eqnarray*}
av_{\mathbf{Z}_N}(a,b)\in [a,b)_{\mathbf{Z}_N}.
\end{eqnarray*}
\end{cor}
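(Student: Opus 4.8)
The plan is to obtain both assertions by sharpening the closed-interval memberships of Proposition 1.3, excluding only the single boundary value. The phrasing ``as a consequence'' together with the remark on midpoints suggests establishing (1) from the $S^1$-geometry first and then transporting (2) across the bijection $\chi_N$, so I would follow that order.

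For (1), I would start from $av_{\mu_N}(z,w)\in[z,w]_{\mu_N}$, which is Proposition 1.3(1), and show that the endpoint $w$ is not attained. Since $z\neq w$, the arc length satisfies $0<d_{S^1}(z,w)<2\pi$, so the midpoint $av_{S^1}(z,w)$, which sits at arc-distance $\tfrac12 d_{S^1}(z,w)$ from $z$, lies strictly inside the open arc $(z,w)_{S^1}$ and in particular strictly precedes $w$ in the counterclockwise order. Recalling that the rounding-off operator sends any point of the fundamental segment $[\zeta_N^k,\zeta_N^{k+1})_{S^1}$ to its initial point $\zeta_N^k$, the equality $\lfloor av_{S^1}(z,w)\rfloor_{\mu_N}=w$ would force $av_{S^1}(z,w)$ to lie in the segment $[w,w\zeta_N)_{S^1}$ beginning at $w$, contradicting the fact that it strictly precedes $w$. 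Hence $av_{\mu_N}(z,w)\neq w$, and therefore $av_{\mu_N}(z,w)\in[z,w]_{\mu_N}\setminus\{w\}=[z,w)_{\mu_N}$.

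For (2), I would transfer (1) through $\chi_N$ using the commutativity (1.4). Because $a\neq b$ and $\chi_N|_{\mathbf{Z}_N}$ is injective, $z:=\chi_N(a)$ and $w:=\chi_N(b)$ are distinct, so (1) gives $\chi_N(av_{\mathbf{Z}_N}(a,b))=av_{\mu_N}(z,w)\in[z,w)_{\mu_N}$. Since $\chi_N$ restricts to a bijection from $[a,b]_{\mathbf{Z}_N}$ onto $[z,w]_{\mu_N}$ carrying $b$ to $w$, it also carries $[a,b)_{\mathbf{Z}_N}$ bijectively onto $[z,w)_{\mu_N}$; pulling the membership back along this bijection yields $av_{\mathbf{Z}_N}(a,b)\in[a,b)_{\mathbf{Z}_N}$.

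I expect no genuine difficulty here; the single delicate point is the same one already handled in Proposition 1.3(1), namely the precise claim that the rounding-off operator never overshoots. One must note that, even when the midpoint happens to lie in the last fundamental segment immediately before $w$, it rounds down to the element preceding $w$ rather than to $w$ itself, which is exactly what keeps the value inside $[z,w)_{\mu_N}$. As an alternative route fully internal to $\mathbf{Z}_N$, one could set $d=d_{\mathbf{Z}_N}(a,b)=b-_Na\geq 1$ and observe that $av_{\mathbf{Z}_N}(a,b)$ lies at $\mathbf{Z}_N$-distance $\lfloor d/2\rfloor<d$ from $a$, so it cannot coincide with $b$; this avoids $S^1$ altogether at the cost of re-running the case split of Lemma 1.1.
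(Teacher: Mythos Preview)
Your argument is correct and follows essentially the same route as the paper, which dispatches the corollary in the single sentence preceding its statement (``the midpoint of $[z,w]_{S^1}$ coincides with $w$ only when $z=w$''); you have simply unpacked that remark into the explicit rounding-operator argument and then transported it through $\chi_N$, exactly as the paper intends. One cosmetic point: the closed-interval membership you cite is Proposition~1.2 in the paper's numbering, not Proposition~1.3.
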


\subsection{The space of rhythms and the discrete average transformation}
In this subsection, we recall the definition of the space of rhythms given in [2], and reformulate it by employing the $S^1$-average and the $\mu_N$-average.

\begin{df}
For any positive integer $n$ with $2\leq n \leq N$, an $n$-tuple $\mathbf{a}=(a_0,\cdots,a_{n-1})\in\mathbf{Z}_N^n$ is said to be a {\rm rhythm modulo }$N$ {\rm with} $n$ {\rm onsets}, if  $a_i\neq a_j$ for $i\neq j$, and the following condition holds:
\begin{eqnarray}
\sum_{i=0}^{n-1}(a_{i+1}-_Na_i)=N.
\end{eqnarray}
(Here the summation on the left hand side means the usual addition of integers, and $a_n$ is set to be $a_0$.) We denote by $\mathbf{R}_N^n$ the set of rhythms modulo $N$ with $n$ onsets.
\end{df}

\noindent
This is the original definition given in [2] of the space of rhythms. Note that the condition (1.7) is equivalent to the equality
\begin{eqnarray}
\mathbf{Z}_N=\bigsqcup_{i=0}^{n-1}[a_i,a_{i+_n1})_{\mathbf{Z}_N}.
\end{eqnarray}
Applying the map $\chi_N$ to the both sides of (1.8) we obtain another equivalent condition
\begin{eqnarray}
S^1=\bigsqcup_{i=0}^{n-1}[\chi_N(a_i),\chi_N(a_{i+_n1}))_{S^1},
\end{eqnarray}
which says that the ordered set $(\chi_N(a_0),\cdots,\chi_N(a_{n-1}))$ constitutes a simple convex $n$-gon inscribed in $S^1$. The condition (1.9) clarifies the meaning of the algebraic condition (1.7) in more geometric terms, and will be employed frequently afterwards. \\
In order to make our transition to the Boolean world smooth, we need, however, to define the space $\mathbf{R}_N^1$ of rhythms with 1 onset as well as $\mathbf{R}_N^0$. The following definition fills the gap:

\begin{df}
Any singleton $(a)$ of $a\in\mathbf{Z}_N$ is regarded as a rhythm with 1 onset. The empty set $\phi$ is considered to be the unique rhythm with 0 onset. Accordingly we define
\begin{eqnarray*}
\mathbf{R}_N^1&=&\{(a);a\in\mathbf{Z}_N\}, \\
\mathbf{R}_N^0&=&\{\phi\}.
\end{eqnarray*}
Furthermore we define the total space of rhythms $\mathbf{R}_N$ with $N$ beats by
\begin{eqnarray*}
\mathbf{R}_N=\bigcup_{n=0}^N\mathbf{R}_N^n.
\end{eqnarray*} 
\end{df}

\noindent
The {\it discrete average transformation} on $\mathbf{R}_N$, which is the central topic in [2], is defined as follows.

\begin{df}
For a rhythm $\mathbf{a}=(a_0,\cdots,a_{n-1})\in\mathbf{R}_N^n$ with $2\leq n\leq N$, we define its discrete average transformation $Rav(\mathbf{a})$ by
\begin{eqnarray}
Rav(\mathbf{a})=(av_{\mathbf{Z}_N}(a_0,a_1),av_{\mathbf{Z}_N}(a_1,a_2),\cdots, av_{\mathbf{Z}_N}(a_{n-1},a_0)).
\end{eqnarray}
\end{df}

\noindent
Since we have enlarged the space of rhythms in Definition 1.2, we need to define discrete average on $\mathbf{R}_N^1$ and $\mathbf{R}_N^0$ too. This is accomplished simply as follows:

\begin{df}
For any $(a)\in\mathbf{R}_N^1$, we define
\begin{eqnarray}
Rav((a))=(a),
\end{eqnarray}
and on $\mathbf{R}_N^0$ we put
\begin{eqnarray}
Rav(\phi)=\phi.
\end{eqnarray}
\end{df}

\begin{rem}
It follows from the definition that $av_{\mathbf{Z}_N}(a,a)=0$ holds for any $a\in\mathbf{Z}_N$. Hence our definition (1.11) is natural.
\end{rem}

\noindent
The main purpose of this subsection is to show that the discrete average transformation $Rav$ maps $\mathbf{R}_N$ into itself. This fact is proved in [2] through an algebraic argument. We, however, give below a simpler and more geometric proof.

\begin{prp}
For any $\mathbf{a}\in\mathbf{R}_N^n$, we have $Rav(\mathbf{a})\in\mathbf{R}_N^n$.
\end{prp}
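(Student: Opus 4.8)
The plan is to show that $Rav(\mathbf{a})$ is again a rhythm with $n$ onsets by verifying the two defining conditions of Definition 1.1: that its entries are pairwise distinct, and that the cyclic-gap sum equals $N$. The most efficient route is to pass to the circle $S^1$ via $\chi_N$ and work with the geometric condition (1.9), which states that the $n$ points $\chi_N(a_0),\dots,\chi_N(a_{n-1})$ form a simple convex $n$-gon inscribed in $S^1$ (equivalently, they appear in counterclockwise cyclic order without repetition). Writing $b_i=av_{\mathbf{Z}_N}(a_i,a_{i+1})$ for the entries of $Rav(\mathbf{a})$, I want to prove that the points $\chi_N(b_0),\dots,\chi_N(b_{n-1})$ likewise appear in strict counterclockwise cyclic order, since by the equivalence of (1.7), (1.8), (1.9) this is exactly what is needed.

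First I would record the local picture. By Proposition 1.1 the image point $\chi_N(b_i)=av_{\mu_N}(\chi_N(a_i),\chi_N(a_{i+1}))$ is the $\mu_N$-rounding of the arc-midpoint of $[\chi_N(a_i),\chi_N(a_{i+1})]_{S^1}$; and since the $a_i$ are distinct, Corollary 1.1(1) gives the crucial containment
\begin{eqnarray*}
\chi_N(b_i)\in[\chi_N(a_i),\chi_N(a_{i+1}))_{S^1}.
\end{eqnarray*}
Thus each new vertex $\chi_N(b_i)$ lies on the half-open counterclockwise arc strictly after $\chi_N(a_i)$ and at or before $\chi_N(a_{i+1})$. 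The key observation is that the arcs $[\chi_N(a_i),\chi_N(a_{i+1}))_{S^1}$ for $i=0,\dots,n-1$ are pairwise disjoint and tile $S^1$ by (1.9). Hence the points $\chi_N(b_0),\chi_N(b_1),\dots,\chi_N(b_{n-1})$ lie one in each successive arc of the tiling, which forces them to occur in the same counterclockwise cyclic order as the $a_i$, and in particular to be pairwise distinct. This simultaneously yields distinctness of the $b_i$ and the convex-$n$-gon condition (1.9) for $Rav(\mathbf{a})$.

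Pulling this back through the bijection $\chi_N|_{\mathbf{Z}_N}$ converts (1.9) for the $b_i$ into the partition (1.8), which is equivalent to the gap-sum identity (1.7); therefore $Rav(\mathbf{a})\in\mathbf{R}_N^n$. The degenerate cases $n=0,1$ are immediate from Definition 1.4, where $Rav$ acts as the identity. I expect the main obstacle to be purely bookkeeping rather than conceptual: one must argue carefully that each $\chi_N(b_i)$ really lands in the $i$-th arc of the tiling and not merely in some arc, so that the cyclic order is preserved exactly — the half-open convention in Corollary 1.1(1) (the new point never reaches the right endpoint $\chi_N(a_{i+1})$) is precisely what prevents two consecutive averaged points from colliding, and this boundary behavior is the one place where a sloppy argument could fail.
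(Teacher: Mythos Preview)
Your proposal is correct and mirrors the paper's own proof almost exactly: both pass to $S^1$, invoke Proposition~1.1 and Corollary~1.1 to place each $\chi_N(b_i)$ in the half-open arc $[\chi_N(a_i),\chi_N(a_{i+_n1}))_{S^1}$, and conclude from the disjoint tiling (1.9) that the $b_i$ again satisfy (1.9). One small slip in your prose: you describe $\chi_N(b_i)$ as lying ``strictly after $\chi_N(a_i)$ and at or before $\chi_N(a_{i+1})$,'' which reverses the half-open convention of your own displayed containment --- it should read ``at or after $\chi_N(a_i)$ and strictly before $\chi_N(a_{i+1})$,'' consistent with your later (correct) remark that the new point never reaches the right endpoint.
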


\begin{proof}
We may assume that $n\geq 2$, since the assertion follows directly from Definition 1.4 when $n=0$ or $n=1$. Let $Rav(\mathbf{a})=\mathbf{b}=(b_0,b_1,\cdots,b_{n-1})$. It follows from the assumption that we have
\begin{eqnarray*}
S^1=\bigsqcup_{i=0}^{n-1}[\chi_N(a_i),\chi_N(a_{i+_n1}))_{S^1}.
\end{eqnarray*}
Furthermore it follows from Proposition 1.1 and Corollary 1.1 that
\begin{eqnarray*}
\chi_N(b_i)\in [\chi_N(a_i),\chi_N(a_{i+_n1}))_{\mu_N}.
\end{eqnarray*}
Hence we have 
\begin{eqnarray*}
S^1=\bigsqcup_{i=0}^{n-1}[\chi_N(b_i),\chi_N(b_{i+_n1}))_{S^1}.
\end{eqnarray*}
Therefore the $n$-tuple $\mathbf{b}$ satisfies the condition (1.9), and hence $\mathbf{b}\in \mathbf{R}_N^n$. This completes the proof.
\end{proof}

\section{From modulo-$N$-world to the Boolean world}
The purpose of this section is to construct a Boolean counterpart of the discrete average transformation $Rav$ on $\mathbf{R}_N$. We accomplish this by introducing two intermediate spaces $\overline{\mathbf{R}}_N$ and $\mathbf{I}_N$, which lie between $\mathbf{R}_N$ and our destination, the Boolean space $\mathbf{B}_N=\mathbf{F}_2^N$. We define a certain average map on each of the new spaces, and build three bridges $\pi, s, ItoB$ between each pair of the four spaces, which make the following diagram commutative.

\begin{eqnarray*}
  \begin{CD}
     \mathbf{R}_N @>{\pi}>> \overline{\mathbf{R}}_N @>{s}>> \mathbf{I}_N @>{ItoB}>> \mathbf{B}_N \\
  @V{Rav}VV @V{\overline{R}av}VV @V{Iav}VV @V{Bav}VV \\
     \mathbf{R}_N @>{\pi}>> \overline{\mathbf{R}}_N @>{s}>> \mathbf{I}_N @>{ItoB}>> \mathbf{B}_N\\
  \end{CD}
\end{eqnarray*}

\subsection{Quotient map $\mathbf{R}_N^n\rightarrow\overline{\mathbf{R}}_N^n$ and its section} 
A self map "$rot$" on $\mathbf{R}_N^n$ is introduced in [2]. Let us recall the definition and supply it with additional versions for the case $n=0,1$:

\begin{df}
For any $\mathbf{a}=(a_0,a_1,\cdots,a_{n-1})\in\mathbf{R}_N^n$ with $2\leq n\leq N$, let
\begin{eqnarray}
rot(\mathbf{a})=(a_{n-1},a_0,\cdots,a_{n-2}).
\end{eqnarray}
When $n=0,1$, it is defined to be the identity, namely,
\begin{eqnarray}
rot((a_0))&=&(a_0) \mbox{ for any }(a_0)\in\mathbf{R}_N^1,\\
rot(\phi)&=&\phi.
\end{eqnarray}
\end{df}

\noindent
Note that $rot(\mathbf{a})\in\mathbf{R}_N^n$ for any $\mathbf{a}\in\mathbf{R}_N^n$. We define an equivalence relation on $\mathbf{R}_N^n$ employing this map as follows:

\begin{df}
For any pair $\mathbf{a},\mathbf{b}\in\mathbf{R}_N^n$, they are said to be {\rm equivalent} if there exists an integer $k$ such that 
\begin{eqnarray*}
rot^k(\mathbf{a})=\mathbf{b}.
\end{eqnarray*}
\end{df}

\noindent
Since the map $rot$ is bijective, this defines an equivalence relation on $\mathbf{R}_N^n$, which we denote by "$\sim_{rot}$".

\begin{df}
We denote by $\overline{\mathbf{R}}_N^n$ the quotient of $\mathbf{R}_N^n$ by the equivalence relation $\sim_{rot}$:
\begin{eqnarray*}
\overline{\mathbf{R}}_N^n=\mathbf{R}_N^n/\sim_{rot},
\end{eqnarray*}
and by $\pi:\mathbf{R}_N^n\rightarrow\overline{\mathbf{R}}_N^n$ the natural projection.
\end{df}

\begin{rem}
Our definition of "$rot$" here is actually the inverse of the one used in $[2]$. The reason why we adopt the inverse is that it makes several formulas in this paper look more natural. The equivalence relation "$\sim_{rot}$", however, is actually defined through the action of the group $\mathbf{Z}_n$, generated by $rot$, hence the quotient space $\overline{\mathbf{R}}_N^n$ is exactly the same as the one defined in $[$loc.cit.$]$.
\end{rem}

\begin{rem}
When $n=0,1$, the space $\overline{\mathbf{R}}_N^n$ is nothing other than $\mathbf{R}_N^n$, since $rot$ is the identity on $\mathbf{R}_N^1$ or on $\mathbf{R}_N^0$.
\end{rem}

It follows from Definition 1.3, 1.4, and 2.1 that the two self maps $rot$ and $Rav$ on $\mathbf{R}_N^n$ commute with each other for any $n\geq 0$ :
\begin{eqnarray*}
rot\circ Rav=Rav\circ rot,
\end{eqnarray*}
namely, we have the following commutative diagram:
\begin{eqnarray*}
  \begin{CD}
     \mathbf{R}_N^n @>{rot}>> \mathbf{R}_N^n \\
  @V{Rav}VV @V{Rav}VV \\
     \mathbf{R}_N^n @>{rot}>> \mathbf{R}_N^n \\
  \end{CD}
\end{eqnarray*}
Therefore the map $Rav$ descends to a self map $\overline{R}av$ on $\overline{\mathbf{R}}_N^n$, namely, the equality
\begin{eqnarray}
\pi\circ Rav=\overline{R}av\circ\pi.
\end{eqnarray}
holds. Hence we have the following commutative diagram:
\begin{eqnarray*}
  \begin{CD}
     \mathbf{R}_N^n @>{\pi}>> \overline{\mathbf{R}}_N^n \\
  @V{Rav}VV @V{\overline{R}av}VV \\
     \mathbf{R}_N^n @>{\pi}>> \overline{\mathbf{R}}_N^n \\
  \end{CD}
\end{eqnarray*}

\subsection{Construction of a section of $\pi$}
The purpose of this subsection is to construct a section of the quotient map $\pi:\mathbf{R}_N^n\rightarrow\overline{\mathbf{R}}_N^n$. Here the notion of {\it jumping number}, introduced in [2], plays a crucial role.

\begin{df}
When $n\geq 2$, for any $\mathbf{a}=(a_0,a_1,\cdots, a_{n-1})\in\mathbf{R}_N^n$, there is a unique number $j\in\mathbf{Z}_n$ such that $a_{j-_n1}>a_j$ and that $a_{k-_n1}<a_k$ holds for every other $k\in\mathbf{Z}_n\setminus\{j\}$. The number $j$ is called the {\rm jumping number} of $\mathbf{a}$. When $n=1$, the jumping number of an arbitrary element $(a_0)$ of $\mathbf{R}_N^1$ is defined to be $0$, In any case it is denoted by $j(\mathbf{a})$.
\end{df}

\begin{rem}
Our definition of the jumping number increases the corresponding number defined in $[2]$ by one. One of the main reasons why we adopt this version is that we want every element in the basic set $\mathbf{I}_N^n$, defined in Definition 2.5 later, of increasing sequences to acquire the jumping number $0$. Since we derive several results in this paper by choosing from $\mathbf{I}_N^n$ a representative of the class in $\overline{\mathbf{R}}_N^n$, this convention makes our description simpler.
\end{rem}

We can characterize the jumping number in geometric and intuitive terms by employing the half-closed intervals.

\begin{prp}
Assume that $n\geq 2$. For any $\mathbf{a}=(a_0,a_1,\cdots, a_{n-1})\in\mathbf{R}_N^n$, the jumping number $j(\mathbf{a})$ is equal to the unique number $i\in \mathbf{Z}_n$ such that the half-open interval $(\chi_N(a_{i-_n1}),\chi_N(a_i)]_{S^1}$ contains $1\in S^1$.
\end{prp}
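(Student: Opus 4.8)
The plan is to transport the membership question from the circle $S^1$ down to the combinatorial world of $\mathbf{Z}_N$, where the base point $1\in S^1$ becomes the distinguished element $0\in\mathbf{Z}_N$. Concretely, $1=\chi_N(0)$ lies in $\mu_N$, so $1$ belongs to the $S^1$-arc $(\chi_N(a_{i-_n1}),\chi_N(a_i)]_{S^1}$ precisely when it belongs to the corresponding $\mu_N$-interval $(\chi_N(a_{i-_n1}),\chi_N(a_i)]_{\mu_N}$; and since $\chi_N$ restricts to a bijection $[a,b]_{\mathbf{Z}_N}\to[\chi_N(a),\chi_N(b)]_{\mu_N}$ (recorded at the end of Subsection 1.1) which carries left endpoint to left endpoint, this in turn happens precisely when $0\in(a_{i-_n1},a_i]_{\mathbf{Z}_N}$. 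Thus the whole proposition reduces to the purely combinatorial claim that, for each $i\in\mathbf{Z}_n$, one has $0\in(a_{i-_n1},a_i]_{\mathbf{Z}_N}$ if and only if $a_{i-_n1}>a_i$.

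To prove this claim I would simply unwind the definition of the $\mathbf{Z}_N$-interval in its two cases. If $a_{i-_n1}<a_i$ (the possibility $a_{i-_n1}=a_i$ being excluded, since the coordinates of a rhythm are distinct), then $[a_{i-_n1},a_i]_{\mathbf{Z}_N}=\{a_{i-_n1},\dots,a_i\}$ consists of integers all at least $a_{i-_n1}\geq 0$, so the only way $0$ could occur is as the left endpoint $a_{i-_n1}$, which is deleted in passing to $(a_{i-_n1},a_i]_{\mathbf{Z}_N}$; hence $0\notin(a_{i-_n1},a_i]_{\mathbf{Z}_N}$. If instead $a_{i-_n1}>a_i$, then by definition $[a_{i-_n1},a_i]_{\mathbf{Z}_N}=\{a_{i-_n1},\dots,N-1,0,1,\dots,a_i\}$ visibly contains $0$, and $0\neq a_{i-_n1}$ because $a_{i-_n1}>a_i\geq 0$; so $0$ survives the deletion of the left endpoint and lies in $(a_{i-_n1},a_i]_{\mathbf{Z}_N}$. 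This settles the claim.

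Finally I would assemble the conclusion. By Definition 2.4 there is a \emph{unique} index $j\in\mathbf{Z}_n$ with $a_{j-_n1}>a_j$; by the claim this $j$ is exactly the unique index $i$ for which $0\in(a_{i-_n1},a_i]_{\mathbf{Z}_N}$, and hence, by the reduction in the first paragraph, the unique index for which $(\chi_N(a_{i-_n1}),\chi_N(a_i)]_{S^1}$ contains $1$. This is precisely the assertion of the proposition.

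The underlying computation is trivial; the only step that demands care is the bookkeeping of the half-open endpoints. In particular one must handle the boundary sub-case $a_i=0$, where $\chi_N(a_i)=1$ is itself the right endpoint of the arc and is retained precisely because the interval is closed on the right, so that the straddling arc is counted once and only once. A purely metric argument via the angles $\theta_i=2\pi a_i/N$ is also available, but it forces one to juggle the $2\pi$-wraparound by hand; routing everything through the $\chi_N$-bijection and the definition of the $\mathbf{Z}_N$-interval keeps all of this bookkeeping inside the machinery already built in Subsection 1.1 and is, I expect, the cleanest path.
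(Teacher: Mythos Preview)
Your proof is correct and follows essentially the same route as the paper: reduce the $S^1$-membership $1\in(\chi_N(a_{i-_n1}),\chi_N(a_i)]_{S^1}$ via $\chi_N$ to the $\mathbf{Z}_N$-membership $0\in(a_{i-_n1},a_i]_{\mathbf{Z}_N}$, and then match this against the defining condition of the jumping number. The only cosmetic difference is that the paper obtains uniqueness of the index from the disjoint decomposition $S^1=\bigsqcup_i(\chi_N(a_{i-_n1}),\chi_N(a_i)]_{S^1}$, whereas you extract it from the uniqueness clause in Definition~2.4; your added case analysis for the equivalence $0\in(a_{i-_n1},a_i]_{\mathbf{Z}_N}\Leftrightarrow a_{i-_n1}>a_i$ simply spells out what the paper leaves implicit.
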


\begin{proof}
Replacing the left-closed and right-open intervals in the condition (1.9) by the left-open and right-closed intervals, we obtain an equivalent condition
\begin{eqnarray*}
S^1=\bigsqcup_{i=0}^{n-1}(\chi_N(a_{i-_n1}),\chi_N(a_{i})]_{S^1}.
\end{eqnarray*}
Hence there exists a unique member $i\in \mathbf{Z}_n$ such that 
\begin{eqnarray*}
1\in (\chi_N(a_{i-_n1}),\chi_N(a_i)]_{S^1}.
\end{eqnarray*}
This condition is equivalent to the condition that 
\begin{eqnarray*}
0\in (a_{i-_n1},a_i]_{\mathbf{Z}_N},
\end{eqnarray*}
which holds if and only if $j(\mathbf{a})=i$. This completes the proof.
\end{proof}

\noindent
By the very definition of the jumping number, we have the following:

\begin{prp}
For any $\mathbf{a}\in\mathbf{R}_N^n$, we have
\begin{eqnarray*}
j(rot(\mathbf{a}))=j(\mathbf{a})+_n1.
\end{eqnarray*}
\end{prp}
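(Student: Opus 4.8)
The plan is to reduce the statement to a direct bookkeeping with the cyclic index arithmetic $-_n$, after disposing of the degenerate cases. Since $\mathbf{R}_N^0$ carries no jumping number, the statement is understood for $n\geq 1$; and for $n=1$ both sides are trivial, because $rot$ is the identity on $\mathbf{R}_N^1$ and $j((a_0))=0$, while $0+_11=\langle1\rangle_1=0$, so the equality holds at once. Thus the entire content lies in the case $n\geq 2$, which I would settle by unwinding Definition 2.4 of the jumping number.

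For $n\geq 2$, write $\mathbf{b}=rot(\mathbf{a})=(b_0,\ldots,b_{n-1})$. From (2.1) the action of $rot$ is a cyclic shift, which I would record compactly as the single identity $b_i=a_{i-_n1}$ for every $i\in\mathbf{Z}_n$ (the case $i=0$ being $b_0=a_{n-1}$, consistent with $0-_n1=n-1$). By Definition 2.4 the jumping number $j(\mathbf{b})$ is the unique index $j'\in\mathbf{Z}_n$ with $b_{j'-_n1}>b_{j'}$. Substituting the identity twice gives $b_{j'-_n1}=a_{(j'-_n1)-_n1}$ and $b_{j'}=a_{j'-_n1}$, so this descent condition reads $a_{(j'-_n1)-_n1}>a_{j'-_n1}$. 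Setting $m=j'-_n1$, this is exactly $a_{m-_n1}>a_m$, which by the uniqueness clause defining $j(\mathbf{a})$ holds precisely when $m=j(\mathbf{a})$. Hence $j'-_n1=j(\mathbf{a})$, that is, $j(rot(\mathbf{a}))=j'=j(\mathbf{a})+_n1$, as claimed.

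As a cross-check I would run the same argument through the geometric characterization of Proposition 2.3: since $b_{i-_n1}=a_{i-_n2}$ and $b_i=a_{i-_n1}$, the arc $(\chi_N(b_{i-_n1}),\chi_N(b_i)]_{S^1}$ coincides with $(\chi_N(a_{i-_n2}),\chi_N(a_{i-_n1})]_{S^1}$, so the index $i$ whose arc contains $1$ is obtained from the corresponding index $j(\mathbf{a})$ of $\mathbf{a}$ by the shift $i=j(\mathbf{a})+_n1$. I expect no genuine obstacle beyond the care demanded by the index arithmetic: the single point to keep straight is that $rot$ shifts each coordinate backward by one (namely $b_i=a_{i-_n1}$), so the location of the descent migrates forward by one. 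One must also note that this relabelling preserves the uniqueness of the jumping index, but that is immediate, since a cyclic relabelling is a bijection of $\mathbf{Z}_n$ and therefore carries the unique descent of $\mathbf{a}$ to a unique descent of $\mathbf{b}$.
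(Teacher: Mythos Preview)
Your proof is correct and follows essentially the same approach as the paper: both use the index identity $b_i=a_{i-_n1}$ for $rot(\mathbf{a})$ and track where the unique descent $a_{j-_n1}>a_j$ lands after relabelling. The paper phrases this via the full increasing chain $a_j<a_{j+_n1}<\cdots<a_{j-_n1}$ rewritten in the $b$-coordinates, whereas you isolate only the single descent condition; these are minor stylistic variants of the same argument.
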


\begin{proof}
Let $\mathbf{a}=(a_0,\cdots,a_{n-1})$ and $rot(\mathbf{a})=\mathbf{b}=(b_0,\cdots, b_{n-1})$ so that
\begin{eqnarray}
b_i=a_{i-_n1}
\end{eqnarray}
holds for any $i\in\mathbf{Z}_n$. If we put $j=j(\mathbf{a})$, then we have
\begin{eqnarray*}
0\leq a_{j}<a_{j+_n1}<\cdots <a_{j-_n1}\leq N-1.
\end{eqnarray*}
This series of inequalities is equivalent to
\begin{eqnarray*}
0\leq b_{j+_n1}<b_{j+_n2}<\cdots <b_j\leq N-1
\end{eqnarray*}
by (2.5). Hence $j(\mathbf{b})=j+_n1$. This completes the proof.
\end{proof}

\noindent
As a direct consequence, we obtain the following:

\begin{cor}
For any $x\in\overline{\mathbf{R}}_N^n$, the set of jumping numbers of the elements in the fiber $\pi^{-1}(x)$ coincides with the whole $\mathbf{Z}_n$. 
\end{cor}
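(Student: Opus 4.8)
The plan is to deduce the corollary directly from Proposition 2.3, which tells us how the jumping number transforms under the map $rot$. The key observation is that two elements $\mathbf{a},\mathbf{b}\in\mathbf{R}_N^n$ lie in the same fiber $\pi^{-1}(x)$ precisely when they are related by $\sim_{rot}$, that is, when $\mathbf{b}=rot^k(\mathbf{a})$ for some integer $k$. So the entire fiber is the single orbit $\{rot^k(\mathbf{a}):k\in\mathbf{Z}\}$ generated by any one of its members.

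First I would treat the trivial cases $n=0$ and $n=1$ separately: here $\mathbf{Z}_n$ is a singleton ($\mathbf{Z}_0$ being empty, $\mathbf{Z}_1=\{0\}$), and since $rot$ is the identity on $\mathbf{R}_N^1$ and on $\mathbf{R}_N^0$, the fiber consists of a single element whose jumping number is $0$ by Definition 2.4, so the set of jumping numbers is all of $\mathbf{Z}_1=\{0\}$ (and the case $n=0$ is vacuous). Then I would assume $n\geq 2$ and argue by iterating Proposition 2.3. Fixing any $\mathbf{a}\in\pi^{-1}(x)$ and writing $j=j(\mathbf{a})$, repeated application of Proposition 2.3 yields
\begin{eqnarray*}
j(rot^k(\mathbf{a}))=j+_nk
\end{eqnarray*}
for every integer $k$; this follows by a straightforward induction on $k\geq 0$ (with the analogous step for negative $k$ using the bijectivity of $rot$), since each application of $rot$ adds $1$ modulo $n$.

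Finally I would observe that as $k$ ranges over $\mathbf{Z}$ (equivalently over $\{0,1,\dots,n-1\}$), the value $j+_nk$ ranges over all of $\mathbf{Z}_n$, because $k\mapsto j+_nk$ is a bijection of $\mathbf{Z}_n$. Since every element of $\pi^{-1}(x)$ is of the form $rot^k(\mathbf{a})$, the set of their jumping numbers is exactly $\{j+_nk:k\in\mathbf{Z}_n\}=\mathbf{Z}_n$. I do not anticipate a genuine obstacle here: the corollary is essentially a formal consequence of Proposition 2.3 together with the description of fibers as $rot$-orbits. The only point requiring a little care is bookkeeping the induction across all integers $k$ (positive and negative) and confirming that the orbit exhausts the fiber, which is immediate from the definition of $\sim_{rot}$.
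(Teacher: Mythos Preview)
Your argument is correct and matches the paper's approach: the paper states the corollary as ``a direct consequence'' of the preceding proposition without further proof, and you have simply spelled out that direct consequence by iterating the formula $j(rot(\mathbf{a}))=j(\mathbf{a})+_n1$ over the $rot$-orbit that constitutes the fiber. One minor correction: the relevant result in the paper is Proposition~2.2, not Proposition~2.3.
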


As a result, we can choose from each fiber of $\pi$ a unique rhythm with its jumping number equal to zero. For this reason, we introduce the following:

\begin{df}
We denote by $\mathbf{I}_N^n$ the subset of $\mathbf{R}_N^n$ which consists of every element $(a_0,\cdots,a_{n-1})\in\mathbf{R}_N^n$ such that
\begin{eqnarray}
0\leq a_0<\cdots <a_{n-1}\leq N-1,
\end{eqnarray}
namely, we put
\begin{eqnarray*}
\mathbf{I}_N^n=\{\mathbf{a}\in\mathbf{R}_N^n; j(\mathbf{a})=0\}.
\end{eqnarray*}
\end{df}

\begin{rem}
The condition (2.6) says that the sequence $(a_0,\cdots,a_{n-1})$ is \underline{i}ncreasing. We adopt its initial as the name of the space.
\end{rem}

Proposition 2.2 enables one to find the unique element in $\pi^{-1}(x)\cap\mathbf{I}_N^n$ algorhythmically:

\begin{prp}
For any $\mathbf{a}\in\mathbf{R}_N^n$, let
\begin{eqnarray}
pr_{\mathbf{I}}(\mathbf{a})=rot^{n-_nj(\mathbf{a})}(\mathbf{a}).
\end{eqnarray}
Then it defines a contraction map $pr_{\mathbf{I}}:\mathbf{R}_N^n\rightarrow\mathbf{I}_N^n$ such that
\begin{eqnarray}
pr_{\mathbf{I}}\circ\iota_{\mathbf{I}}=id_{\mathbf{I}_N^n},
\end{eqnarray}
where $\iota_{\mathbf{I}}:\mathbf{I}_N^n\rightarrow\mathbf{R}_N^n$ denotes the natural inclusion map. 
\end{prp}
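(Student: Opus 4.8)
The plan is to read the formula (2.7) for $pr_{\mathbf{I}}$ as a power of $rot$ whose exponent is calibrated precisely to annihilate the jumping number, and then to verify the two required properties — that the image lands in $\mathbf{I}_N^n$, and that the restriction to $\mathbf{I}_N^n$ is the identity — by tracking how iterating $rot$ shifts the jumping number via Proposition 2.2.

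First I would record that $rot$ has order dividing $n$ on $\mathbf{R}_N^n$: since $rot$ cyclically shifts the $n$ coordinates by one position, $rot^n=id$, so $rot^k$ depends only on $k$ modulo $n$, and the exponent $n-_nj(\mathbf{a})\in\mathbf{Z}_n$ unambiguously names a power of $rot$. This makes (2.7) an honest self-map of $\mathbf{R}_N^n$. Next I would iterate Proposition 2.2: a one-line induction on $k\geq 0$ gives $j(rot^k(\mathbf{a}))=j(\mathbf{a})+_nk$, the base case $k=0$ being trivial and the inductive step a single application of Proposition 2.2. Setting $k=n-_nj(\mathbf{a})$ then yields
\[
j(pr_{\mathbf{I}}(\mathbf{a}))=j(\mathbf{a})+_n(n-_nj(\mathbf{a}))=0,
\]
since $j(\mathbf{a})+(n-j(\mathbf{a}))\equiv 0\pmod n$ and $+_n,-_n$ are merely the operations of $\mathbf{Z}/n\mathbf{Z}$ represented by least nonnegative remainders. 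By Definition 2.5 this means $pr_{\mathbf{I}}(\mathbf{a})\in\mathbf{I}_N^n$, so $pr_{\mathbf{I}}$ does take values in $\mathbf{I}_N^n$.

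For the retraction identity (2.8) I would specialize to $\mathbf{a}\in\mathbf{I}_N^n$, where $j(\mathbf{a})=0$ by definition; then the exponent is $n-_n0=\langle n\rangle_n=0$, whence $pr_{\mathbf{I}}(\mathbf{a})=rot^0(\mathbf{a})=\mathbf{a}$, which is exactly $pr_{\mathbf{I}}\circ\iota_{\mathbf{I}}=id_{\mathbf{I}_N^n}$. The degenerate cases $n=0,1$ are immediate, since there $rot$ is the identity and every element already carries jumping number $0$.

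I do not anticipate a genuine obstacle: the statement is essentially a bookkeeping consequence of Proposition 2.2. The only point that deserves a moment's care is the well-definedness of $rot^{n-_nj(\mathbf{a})}$ — that reducing the exponent modulo $n$ changes nothing — which rests on $rot^n=id$; everything else is a direct computation in $\mathbf{Z}/n\mathbf{Z}$.
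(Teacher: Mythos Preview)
Your proof is correct and follows essentially the same approach as the paper's: use Proposition 2.2 (iterated) to compute $j(pr_{\mathbf{I}}(\mathbf{a}))=j(\mathbf{a})+_n(n-_nj(\mathbf{a}))=0$, conclude $pr_{\mathbf{I}}(\mathbf{a})\in\mathbf{I}_N^n$, and then specialize to $j(\mathbf{a})=0$ for the retraction identity. Your version is slightly more thorough in spelling out the well-definedness of the exponent modulo $n$ and the degenerate cases $n=0,1$, but the argument is the same.
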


\begin{proof}
It follows form Proposition 2.2 and (2.7) that
\begin{eqnarray*}
j(pr_{\mathbf{I}}(\mathbf{a}))=j(\mathbf{a})+_n(n-_nj(\mathbf{a}))=0.
\end{eqnarray*}
Hence $pr_{\mathbf{I}}(\mathbf{a})\in\mathbf{I}_N^n$. Furthermore, if $\mathbf{a}\in\mathbf{I}_N^n$, then $j(\mathbf{a})=0$, and hence $pr_{\mathbf{I}}(\mathbf{a})=\mathbf{a}$. This shows that the equality (2.8) holds true. This completes the proof.
\end{proof}

\noindent
Thus we obtain a section of the quotient map $\pi$:

\begin{prp}
For any $\mathbf{a}\in\mathbf{R}_N^n$, let
\begin{eqnarray*}
s(\pi(\mathbf{a}))=pr_{\mathbf{I}}(\mathbf{a}).
\end{eqnarray*}
Then $s$ defines a section of the quotient map $\pi:\mathbf{R}_N^n\rightarrow \overline{\mathbf{R}}_N^n$, and its image coincides with the set $\mathbf{I}_N^n$. In particular $\mathbf{I}_N^n$ and $\overline{\mathbf{R}}_N^n$ are in one-to-one correspondence.
\end{prp}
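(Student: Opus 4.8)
The goal is to show that $s$ is a well-defined section of $\pi$ whose image is exactly $\mathbf{I}_N^n$. The plan is to first verify that the assignment $s(\pi(\mathbf{a}))=pr_{\mathbf{I}}(\mathbf{a})$ does not depend on the choice of representative $\mathbf{a}$ in the fiber $\pi^{-1}(\pi(\mathbf{a}))$; this is the crux of the argument, since $s$ is defined on the quotient but its formula refers to a chosen lift. Once well-definedness is settled, the section property $\pi\circ s=\id_{\overline{\mathbf{R}}_N^n}$ and the identification of the image follow quickly from the results already proved in this subsection.

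First I would establish well-definedness. Suppose $\mathbf{a}'\sim_{rot}\mathbf{a}$, so that $\mathbf{a}'=rot^k(\mathbf{a})$ for some integer $k$. By Proposition 2.2 we have $j(\mathbf{a}')=j(\mathbf{a})+_nk$, and therefore
\begin{eqnarray*}
pr_{\mathbf{I}}(\mathbf{a}')=rot^{n-_nj(\mathbf{a}')}(\mathbf{a}')=rot^{n-_n(j(\mathbf{a})+_nk)}\bigl(rot^k(\mathbf{a})\bigr)=rot^{(n-_nj(\mathbf{a}))}(\mathbf{a})=pr_{\mathbf{I}}(\mathbf{a}),
\end{eqnarray*}
where in the last two steps I use that $rot$ generates an action of the cyclic group $\mathbf{Z}_n$, so exponents may be added modulo $n$. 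Thus $pr_{\mathbf{I}}$ is constant on each $\sim_{rot}$-class, and $s$ is a genuinely well-defined map on $\overline{\mathbf{R}}_N^n$.

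Next I would check the section property. For any $\mathbf{a}\in\mathbf{R}_N^n$ we have $s(\pi(\mathbf{a}))=pr_{\mathbf{I}}(\mathbf{a})=rot^{n-_nj(\mathbf{a})}(\mathbf{a})$, which lies in the same $\sim_{rot}$-class as $\mathbf{a}$ because it is obtained by applying a power of $rot$. Hence $\pi(s(\pi(\mathbf{a})))=\pi(\mathbf{a})$, and since every element of $\overline{\mathbf{R}}_N^n$ is of the form $\pi(\mathbf{a})$, this gives $\pi\circ s=\id_{\overline{\mathbf{R}}_N^n}$. For the image, Proposition 2.4 already shows $pr_{\mathbf{I}}(\mathbf{a})\in\mathbf{I}_N^n$ for every $\mathbf{a}$, so $s(\overline{\mathbf{R}}_N^n)\subseteq\mathbf{I}_N^n$; conversely, given $\mathbf{b}\in\mathbf{I}_N^n$ we have $j(\mathbf{b})=0$, so $s(\pi(\mathbf{b}))=pr_{\mathbf{I}}(\mathbf{b})=\mathbf{b}$ by (2.8), proving the reverse inclusion. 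Finally, the bijection between $\mathbf{I}_N^n$ and $\overline{\mathbf{R}}_N^n$ follows formally: $s$ is injective because it admits the left inverse $\pi$, and its image is all of $\mathbf{I}_N^n$, while Corollary 2.1 guarantees that each fiber of $\pi$ meets $\mathbf{I}_N^n$ (in exactly the element with jumping number $0$), so $\pi|_{\mathbf{I}_N^n}$ and $s$ are mutually inverse. I expect the only genuinely delicate point to be the representative-independence computation above, where one must handle the modular arithmetic of the $rot$-exponents carefully; the remaining assertions are essentially bookkeeping built on Propositions 2.2 and 2.4.
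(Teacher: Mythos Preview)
Your argument is correct. The paper itself gives no explicit proof of this proposition; it simply prefaces the statement with ``Thus we obtain a section of the quotient map $\pi$:'', treating it as an immediate consequence of Corollary 2.1 and Proposition 2.3. Your write-up fills in precisely what the paper leaves tacit, and you are right that the only point requiring any work is the representative-independence of $s$, which you handle cleanly via Proposition 2.2 and the fact that $rot^n=\id$.

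One small slip in labeling: where you write ``Proposition 2.4 already shows $pr_{\mathbf{I}}(\mathbf{a})\in\mathbf{I}_N^n$'', you mean Proposition 2.3 (the contraction property of $pr_{\mathbf{I}}$, together with equation (2.8)); Proposition 2.4 is the very statement under proof.
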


\begin{rem}
When $n=0,1$, we define $s:\overline{\mathbf{R}}_N^n\rightarrow\mathbf{R}_N^n$ to be the identity map. (See Remark 2.2.)
\end{rem}

\subsection{From $\overline{\mathbf{R}}_N^n$ to Boolean world via $\mathbf{I}_N^n$ }
In a standard way, the set $\mathbf{I}_N^n$ is connected directly to the Boolean world. For later use we introduce some notations and make the connection clear:

\begin{df}
Let $\mathbf{B}_N$ denote the $N$-dimensional vector space $\mathbf{F}_2^N$ over the prime field $\mathbf{F}_2=\{0,1\}$. For any vector $\mathbf{v}\in\mathbf{B}_N$, we denote by $supp(\mathbf{v})$ the set of indices of the non-zero entries of $\mathbf{v}$,
\begin{eqnarray*}
supp(\mathbf{v})=\{i\in\mathbf{Z}_N;v_i=1\}\subset\mathbf{Z}_N,
\end{eqnarray*}
and let
\begin{eqnarray*}
\mathbf{B}_N^n=\{\mathbf{v}\in\mathbf{B}_N;|supp(\mathbf{v})|=n\}
\end{eqnarray*}
for any $n$ with $0\leq n\leq N$, so that
\begin{eqnarray*}
\mathbf{B}_N=\bigcup_{n=0}^N\mathbf{B}_N^n.
\end{eqnarray*}
\end{df}

First we relate every element in $\mathbf{R}_N^n$ to $\mathbf{B}_N^n$ by taking its characteristic function:

\begin{df}
With any $\mathbf{a}\in\mathbf{R}_N^n$, we associate a vector 
\begin{eqnarray*}
RtoB(\mathbf{a})=(v_0,\cdots,v_{N-1})\in\mathbf{B}_N
\end{eqnarray*}
by the rule
\begin{eqnarray*}
v_i=
\left\{
\begin{array}{ll}
1, & \mbox{ if $i\in \mathbf{a}$,}\\
0, & \mbox{ if $i\not\in \mathbf{a}$.} \\
\end{array}
\right.
\end{eqnarray*}
This gives us the map
\begin{eqnarray*}
RtoB:\mathbf{R}_N^n\rightarrow \mathbf{B}_N^n.
\end{eqnarray*}
\end{df}

\noindent
Note that the following simple but important equality
\begin{eqnarray}
RtoB\circ rot=RtoB.
\end{eqnarray}
holds, since the map $rot$ does not change the contents of any rhythms. It follows from (2.9) that
\begin{eqnarray}
RtoB\circ pr_{\mathbf{I}}=RtoB,
\end{eqnarray}
since $pr_{\mathbf{I}}$ is a power of $rot$ by the definition (2.7). \\
Next, based on the map $RtoB$, we relate $\mathbf{I}_N^n$ with $\mathbf{B}_N^n$:

\begin{df}
For any $n$-element subset $S$ of $\mathbf{Z}_N$, by arranging its elements in increasing order
\begin{eqnarray*}
0\leq s_0<s_1<\cdots <s_{n-1}\leq N-1,
\end{eqnarray*}
we define
\begin{eqnarray*}
ord(S)=(s_0,\cdots,s_{n-1}).
\end{eqnarray*}
We put
\begin{eqnarray}
ItoB&=&RtoB\circ\iota_{\mathbf{I}}:\mathbf{I}_N^n\rightarrow\mathbf{B}_N^n, \\
BtoI&=&ord\circ supp:\mathbf{B}_N^n\rightarrow\mathbf{I}_N^n.
\end{eqnarray}
\end{df}

\noindent
Since one can check that
\begin{eqnarray}
ItoB\circ BtoI&=&id_{\mathbf{B}_N^n},\\
BtoI\circ ItoB&=&id_{\mathbf{I}_N^n},
\end{eqnarray}
$\mathbf{I}_N^n$ and $\mathbf{B}_N^n$ are in one-to-one correspondence. \\
Finally, we relate the quotient space $\overline{\mathbf{R}}_N^n$ with $\mathbf{B}_N^n$.

\begin{prp}
Let $\overline{R}toB:\overline{\mathbf{R}}_N^n\rightarrow\mathbf{B}_N^n$ be the map defined by
\begin{eqnarray}
\overline{R}toB=ItoB\circ s,
\end{eqnarray}
and let $Bto\overline{R}:\mathbf{B}_N^n\rightarrow\overline{\mathbf{R}}_N^n$ be the map defined by
\begin{eqnarray}
Bto\overline{R}=\pi\circ BtoI.
\end{eqnarray}
These maps are inverse to each other:
\begin{eqnarray}
Bto\overline{R}\circ \overline{R}toB&=&id_{\overline{\mathbf{R}}_N^n}, \\
\overline{R}toB\circ Bto\overline{R}&=&id_{\mathbf{B}_N^n}.
\end{eqnarray}
In particular, the sets $\overline{\mathbf{R}}_N^n$ and $\mathbf{B}_N^n$ are in one-to-one correspondence.
\end{prp}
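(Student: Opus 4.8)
The plan is to establish both inverse relations (2.18) and (2.19) by purely formal diagram chases, reducing each composite to building-block identities that are already available: the mutually-inverse pair (2.14) and (2.15) relating $\mathbf{I}_N^n$ and $\mathbf{B}_N^n$, the identity (2.8) expressing that $pr_{\mathbf{I}}$ fixes $\mathbf{I}_N^n$ pointwise, and the defining property of the section $s$ from Proposition 2.5. No new geometric input is needed; everything follows by composing maps and collapsing the resulting strings.

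For (2.18), I would expand the two maps via their definitions (2.16) and (2.17) and regroup:
\begin{eqnarray*}
Bto\overline{R}\circ\overline{R}toB=(\pi\circ BtoI)\circ(ItoB\circ s)=\pi\circ(BtoI\circ ItoB)\circ s.
\end{eqnarray*}
The inner composite $BtoI\circ ItoB$ collapses to $id_{\mathbf{I}_N^n}$ by (2.15), leaving $\pi\circ s$. Since $s$ is a section of $\pi$, that is $\pi\circ s=id_{\overline{\mathbf{R}}_N^n}$ (recorded in Proposition 2.5, and ultimately a consequence of $s(\pi(\mathbf{a}))=pr_{\mathbf{I}}(\mathbf{a})$ being $rot$-equivalent to $\mathbf{a}$), the whole composite is the identity. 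This step is entirely mechanical.

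For (2.19), expanding in the same way gives
\begin{eqnarray*}
\overline{R}toB\circ Bto\overline{R}=(ItoB\circ s)\circ(\pi\circ BtoI)=ItoB\circ(s\circ\pi)\circ BtoI.
\end{eqnarray*}
Here the crucial observation is that $s\circ\pi$ restricts to the identity on $\mathbf{I}_N^n$: for $\mathbf{a}\in\mathbf{I}_N^n$ one has $s(\pi(\mathbf{a}))=pr_{\mathbf{I}}(\mathbf{a})=\mathbf{a}$, the last equality being exactly (2.8). Because $BtoI=ord\circ supp$ takes values in $\mathbf{I}_N^n$ by construction, the composite $(s\circ\pi)\circ BtoI$ therefore equals $BtoI$ itself. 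Finally $ItoB\circ BtoI=id_{\mathbf{B}_N^n}$ by (2.14), which completes the identity. The concluding bijection claim is then immediate, since two maps that are inverse on both sides are mutually inverse bijections.

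I expect no genuine obstacle; the single point that demands care is the bookkeeping of domains and codomains, in particular the reduction $(s\circ\pi)\circ BtoI=BtoI$ in the second identity. That reduction rests on two facts used \emph{together}: that $BtoI$ lands in the increasing-sequence set $\mathbf{I}_N^n$, and that $s\circ\pi$ acts as the identity precisely on that set. Keeping these two ingredients explicit is the substantive content underlying what is otherwise a routine chase.
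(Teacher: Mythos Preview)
Your proof is correct and follows essentially the same approach as the paper: expand the composites via the definitions, collapse $BtoI\circ ItoB$ to the identity and use $\pi\circ s=\id$ for one direction, and for the other direction invoke that $s\circ\pi$ restricts to the identity on $\mathbf{I}_N^n$ (since $BtoI$ lands there) before applying $ItoB\circ BtoI=\id$. Note only that your cross-references are shifted by one throughout (the section $s$ is Proposition~2.4, the $\mathbf{I}$--$\mathbf{B}$ identities are (2.13)--(2.14), the definitions are (2.15)--(2.16), and the inverse relations are (2.17)--(2.18)).
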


\begin{proof}
First we compute $Bto\overline{R}\circ \overline{R}toB$ as follows:
\begin{eqnarray*}
Bto\overline{R}\circ \overline{R}toB&=&(\pi\circ BtoI)\circ(ItoB\circ s)\\
&=&\pi\circ s\hspace{15mm}\mbox{ (by (2.14))}\\
&=&id_{\overline{\mathbf{R}}_N^n},
\end{eqnarray*}
hence we have (2.17). For the validity of (2.18), we note that, by Proposition 2.4, the composite map $s\circ\pi:\mathbf{R}_N^n\rightarrow\mathbf{R}_N^n$ becomes the identity when restricted to the subset $\mathbf{I}_N^n$. Hence 
\begin{eqnarray*}
\overline{R}toB\circ Bto\overline{R}&=&(ItoB\circ s)\circ(\pi\circ BtoI)\\
&=&ItoB\circ BtoI\\
&=&id_{\mathbf{B}_N^n},
\end{eqnarray*}
the last equality being a consequence of (2.13). This completes the proof.
\end{proof}

The two maps $RtoB$ and $\overline{R}toB$ are related in a natural way:

\begin{prp}
We have
\begin{eqnarray*}
\overline{R}toB\circ\pi=RtoB.
\end{eqnarray*}
\end{prp}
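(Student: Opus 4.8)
The plan is to collapse the left-hand composite $\overline{R}toB\circ\pi$ down to $RtoB\circ pr_{\mathbf{I}}$ and then invoke the already-established identity (2.10). Since both $\overline{R}toB\circ\pi$ and $RtoB$ are maps $\mathbf{R}_N^n\to\mathbf{B}_N^n$, it suffices to show the two agree, and arguing at the level of maps (rather than pointwise on a rhythm $\mathbf{a}$) keeps things short. I would treat $n\geq 2$ as the main case and dispose of $n=0,1$ separately.

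First I would substitute the definition $\overline{R}toB=ItoB\circ s$ from (2.15), giving $\overline{R}toB\circ\pi=ItoB\circ s\circ\pi$. The key observation is that $s\circ\pi=pr_{\mathbf{I}}$ as a map $\mathbf{R}_N^n\to\mathbf{I}_N^n$: this is exactly how the section $s$ was built in Proposition 2.4, namely $s(\pi(\mathbf{a}))=pr_{\mathbf{I}}(\mathbf{a})$ for every $\mathbf{a}$. Hence $\overline{R}toB\circ\pi=ItoB\circ pr_{\mathbf{I}}$.

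Next I would expand $ItoB=RtoB\circ\iota_{\mathbf{I}}$ from (2.11), so that $ItoB\circ pr_{\mathbf{I}}=RtoB\circ\iota_{\mathbf{I}}\circ pr_{\mathbf{I}}$. The only point requiring care is that $\iota_{\mathbf{I}}\circ pr_{\mathbf{I}}$ is nothing but $pr_{\mathbf{I}}$ regarded as a self-map of $\mathbf{R}_N^n$ (the contraction followed by the inclusion of $\mathbf{I}_N^n$), which is precisely the $pr_{\mathbf{I}}$ occurring in (2.10). Applying (2.10), i.e.\ $RtoB\circ pr_{\mathbf{I}}=RtoB$, then yields $\overline{R}toB\circ\pi=RtoB$, as claimed.

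The argument is a short chain of substitutions, so I do not anticipate a genuine obstacle; the one matter demanding attention is the bookkeeping of domains and codomains, specifically the identification of $s\circ\pi$ with $pr_{\mathbf{I}}$ and the harmless insertion and removal of the inclusion $\iota_{\mathbf{I}}$ so that $RtoB$ becomes directly applicable. For completeness I would also note the degenerate cases $n=0,1$: there $\pi$ and $s$ are the identity maps (Remark 2.2 and Remark 2.5) and $\mathbf{I}_N^n=\mathbf{R}_N^n$, so the asserted equality reduces at once to $\overline{R}toB=RtoB\circ\iota_{\mathbf{I}}=RtoB$.
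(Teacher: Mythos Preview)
Your argument is correct and follows exactly the same chain of substitutions as the paper's proof: $\overline{R}toB\circ\pi = ItoB\circ s\circ\pi = ItoB\circ pr_{\mathbf{I}} = RtoB\circ\iota_{\mathbf{I}}\circ pr_{\mathbf{I}} = RtoB\circ pr_{\mathbf{I}} = RtoB$, invoking (2.15), Proposition~2.4, (2.11), and (2.10) in that order. Your additional remarks on the degenerate cases $n=0,1$ go slightly beyond what the paper writes out but are harmless and correct.
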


\begin{proof}
Combining several defining equations, we compute as follows:
\begin{eqnarray*}
\overline{R}toB\circ\pi&=&ItoB\circ s\circ\pi\hspace{10mm}(\mbox{by }(2.15))\\
&=&ItoB\circ pr_{\mathbf{I}}\hspace{12mm}(\mbox{by Proposition }2.4)\\
&=&RtoB\circ\iota_{\mathbf{I}}\circ pr_{\mathbf{I}}\hspace{6mm}(\mbox{by }(2.11))\\
&=&RtoB\circ pr_{\mathbf{I}}\\
&=&RtoB. \hspace{20mm}(\mbox{by }(2.10))
\end{eqnarray*}
This finishes the proof.
\end{proof}

\subsection{Counterpart of $Rav$ on $\mathbf{I}_N^n$}
The discrete average map $Rav:\mathbf{R}_N^n\rightarrow\mathbf{R}_N^n$, to our regret, does {\it not} keep the subspace $\mathbf{I}_N^n\subset\mathbf{R}_N^n$ invariant. For example, for $\mathbf{a}=(2,3,7)\in\mathbf{I}_8^3$, its discrete average is
\begin{eqnarray*}
Rav(2,3,7)=(2,5,0),
\end{eqnarray*}
which does not belong to $\mathbf{I}_8^3$. In order to overcome this inconvenience and to define a self map $Iav$ on $\mathbf{I}_N^n$, we introduce the following notion:

\begin{df}
A rhythm $\mathbf{a}=(a_0,\cdots,a_{n-1})\in\mathbf{I}_N^n$ is said to be {\rm proper}, if
\begin{eqnarray}
N-a_{n-1}>a_0.
\end{eqnarray}
If it is not proper, it is called {\rm improper}. 
\end{df}

\noindent
The following proposition relates the properness of an increasing rhythm with its behavior under the discrete average map.

\begin{prp}
(1) For an increasing rhythm $\mathbf{a}=(a_0,\cdots,a_{n-1})\in\mathbf{I}_N^n$, it is proper if and only if $Rav(\mathbf{a})\in\mathbf{I}_N^n$. \\
(2) If $\mathbf{a}=(a_0,\cdots,a_{n-1})\in\mathbf{I}_N^n$ is improper, then $rot(Rav(\mathbf{a}))\in \mathbf{I}_N^n$.
\end{prp}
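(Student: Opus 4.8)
We may assume $n \ge 2$, the cases $n \le 1$ being immediate from the degenerate definitions. The plan is to compute $Rav(\mathbf{a})$ coordinate by coordinate from the increasing data $0 \le a_0 < a_1 < \cdots < a_{n-1} \le N-1$ and to isolate the single coordinate that can break monotonicity. Write $Rav(\mathbf{a}) = (b_0, \ldots, b_{n-1})$, where $b_i = av_{\mathbf{Z}_N}(a_i, a_{i+1})$ for $0 \le i \le n-2$ and $b_{n-1} = av_{\mathbf{Z}_N}(a_{n-1}, a_0)$. For $0 \le i \le n-2$ we have $a_i < a_{i+1}$, so the first branch of Lemma 1.1 gives $b_i = \lfloor (a_i + a_{i+1})/2 \rfloor$, and Corollary 1.1 places $b_i$ in $[a_i, a_{i+1})_{\mathbf{Z}_N}$, i.e. $a_i \le b_i < a_{i+1}$. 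Chaining $b_i < a_{i+1} \le b_{i+1}$ shows $b_0 < b_1 < \cdots < b_{n-2}$ unconditionally; moreover $b_0 \ge a_0$. Hence the only coordinate in question is the wrap-around average $b_{n-1}$.

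Next I would analyze $b_{n-1} = av_{\mathbf{Z}_N}(a_{n-1}, a_0)$. Since $a_{n-1} > a_0$, the second branch of Lemma 1.1 gives $b_{n-1} = \langle \lfloor (N + a_0 + a_{n-1})/2 \rfloor \rangle_N$, while Corollary 1.1 forces $b_{n-1} \in [a_{n-1}, a_0)_{\mathbf{Z}_N}$. Thus $b_{n-1}$ lies either in the upper block $\{a_{n-1}, \ldots, N-1\}$, whence $b_{n-1} \ge a_{n-1}$, or in the lower block $\{0, \ldots, a_0 - 1\}$, whence $b_{n-1} < a_0$. The decisive point is that these two alternatives are separated precisely by whether the reduction modulo $N$ is trivial: one has $\lfloor (N + a_0 + a_{n-1})/2 \rfloor < N$ if and only if $a_0 + a_{n-1} < N$, and this last inequality is exactly the properness condition $N - a_{n-1} > a_0$.

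With this dichotomy, part (1) follows. If $\mathbf{a}$ is proper, then $a_0 + a_{n-1} < N$, no reduction occurs, $b_{n-1} \ge a_{n-1} > b_{n-2}$, and we obtain $0 \le b_0 < \cdots < b_{n-1} \le N-1$, so $Rav(\mathbf{a}) \in \mathbf{I}_N^n$. If $\mathbf{a}$ is improper, then $a_0 + a_{n-1} \ge N$, the reduction is nontrivial, and $b_{n-1}$ falls in the lower block, so $b_{n-1} < a_0 \le b_0$; hence $Rav(\mathbf{a})$ is not increasing and $Rav(\mathbf{a}) \notin \mathbf{I}_N^n$. Since every element of $\mathbf{I}_N^n$ is either proper or improper, this proves the equivalence. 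For part (2), in the improper case I would read off the jumping number of $\mathbf{b} = Rav(\mathbf{a})$: we have $b_0 < \cdots < b_{n-2}$ together with $b_{n-1} < a_0 \le b_0 \le b_{n-2}$, so the cyclic sequence descends only at $b_{n-2} > b_{n-1}$; as $\mathbf{b}$ is a genuine rhythm by Proposition 1.4, this is its unique descent and $j(\mathbf{b}) = n-1$. Proposition 2.2 then gives $j(rot(\mathbf{b})) = (n-1) +_n 1 = 0$, so $rot(Rav(\mathbf{a})) \in \mathbf{I}_N^n$, as required.

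The interlacing of $b_0, \ldots, b_{n-2}$ is routine, so the crux is the floor bookkeeping for $b_{n-1}$: one must verify that $\lfloor (N + a_0 + a_{n-1})/2 \rfloor$ crosses the threshold $N$ exactly when $a_0 + a_{n-1} \ge N$, and check that the boundary value $a_0 + a_{n-1} = N$ (which forces $a_0 \ge 1$ and $b_{n-1} = 0 < a_0$) sits on the improper side, in agreement with the strict inequality defining properness. A geometric alternative I would keep in reserve uses Proposition 2.1: since $j(\mathbf{a}) = 0$, the point $1 \in S^1$ lies in the arc $(\chi_N(a_{n-1}), \chi_N(a_0)]_{S^1}$ whose $S^1$-midpoint produces $b_{n-1}$, and properness says exactly that this midpoint lies strictly before $1$, on the $\chi_N(a_{n-1})$ side.
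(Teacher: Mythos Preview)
Your proof is correct and shares the paper's skeleton: both first establish the interlacing $a_0 \le b_0 < a_1 \le \cdots \le b_{n-2} < a_{n-1}$ via Corollary~1.1, reducing everything to the location of the wrap-around coordinate $b_{n-1}$. The divergence is in how $b_{n-1}$ is handled. The paper argues geometrically, rewriting properness as the distance inequality $d_{S^1}(\chi_N(a_{n-1}),1) > d_{S^1}(1,\chi_N(a_0))$ and reading off from the $S^1$-midpoint that $av_{\mu_N}(\chi_N(a_{n-1}),\chi_N(a_0))$ lands in $[\chi_N(a_{n-1}),1)_{S^1}$ exactly in the proper case; this is precisely the alternative you sketch in your last paragraph. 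You instead work algebraically from Lemma~1.1, showing that $\lfloor (N+a_0+a_{n-1})/2 \rfloor$ needs reduction modulo $N$ exactly when $a_0+a_{n-1} \ge N$. Your route is more self-contained (no detour through $S^1$) and makes the boundary case $a_0+a_{n-1}=N$ explicit; the paper's is a bit shorter and more conceptual. For part~(2) the paper simply writes out $rot(\mathbf{b})=(b_{n-1},b_0,\ldots,b_{n-2})$ and observes it is increasing from the inequalities already obtained, whereas you compute $j(\mathbf{b})=n-1$ and invoke Proposition~2.2; both are immediate. One minor citation slip: the fact that $\mathbf{b}\in\mathbf{R}_N^n$ is Proposition~1.3, not~1.4.
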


\begin{proof}
(1) Let $Rav(\mathbf{a})=\mathbf{b}=(b_0,b_1,\cdots,b_{n-1})$. By Corollary 1.1, (2), we have a series of inequalities
\begin{eqnarray}
0\leq a_0\leq b_0< a_1\leq b_1< a_2\leq \cdots <a_{n-2}\leq b_{n-2}< a_{n-1}\leq N-1.
\end{eqnarray}
Therefore only the behavior of $b_{n-1}=av_{\mathbf{Z}_N}(a_{n-1},a_0)$ determines whether $\mathbf{b}$ belongs to $\mathbf{I}_N^n$ or not. Note that the condition (2.19) is expressed as
\begin{eqnarray*}
d_{\mathbf{Z}_N}(a_{n-1},0)>d_{\mathbf{Z}_N}(0,a_0).
\end{eqnarray*}
Since the two distances $d_{\mathbf{Z}_N}$ and $d_{S^1}$ are in proportion through the map $\chi_N$ by (1.2), this inequality is equivalent to
\begin{eqnarray*}
d_{S^1}(\chi_N(a_{n-1}),1)>d_{S^1}(1,\chi_N(a_0)).
\end{eqnarray*}
This condition holds if and only if the midpoint $av_{S^1}(\chi_N(a_{n-1}),\chi_N(a_0))$ lies on the $S^1$-interval $[\chi_N(a_{n-1}), 1)_{S^1}$. Moreover the last condition is equivalent to $av_{\mu_N}(\chi_N(a_{n-1}),\chi_N(a_0))\in [\chi_N(a_{n-1}), 1)_{S^1}$. It follows from Proposition 1.1 that this is equivalent to the condition that $av_{\mathbf{Z}_N}(a_{n-1},a_0)\in [a_{n-1}, 0)_{\mathbf{Z}_N}$, which together with (2.20) says that $Rav(\mathbf{a})\in\mathbf{I}_N^n$. \\
(2) If $\mathbf{a}=(a_0,\cdots,a_{n-1})\in\mathbf{I}_N^n$ is improper, then it follows from the proof for the point (1) that
\begin{eqnarray}
0\leq b_{n-1}< a_0\leq b_0< a_1\leq b_1< a_2\leq \cdots \leq b_{n-2}< a_{n-1}\leq N-1.
\end{eqnarray}
Therefore $rot(\mathbf{b})=(b_{n-1}, b_0, b_1,  \cdots,b_{n-2})\in \mathbf{I}_N^n$. This completes the proof.
\end{proof}

\noindent
The following lemma is a rephrasing of Proposition 2.7.

\begin{cor}
For any increasing rhythm $\mathbf{a}\in\mathbf{I}_N^n$, we introduce the number $p(\mathbf{a})\in\{0,1\}$ by the rule
\begin{eqnarray*}
p(\mathbf{a})=
\left\{
\begin{array}{ll}
0, & \mbox{ if }\mathbf{a}\mbox{ is proper,}\\
1, & \mbox{ if }\mathbf{a}\mbox{ is improper.}\\
\end{array}
\right.
\end{eqnarray*}
Then we have
\begin{eqnarray*}
rot^{p(\mathbf{a})}(Rav(\mathbf{a}))\in\mathbf{I}_N^n.
\end{eqnarray*}
\end{cor}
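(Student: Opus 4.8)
The plan is to read the Corollary purely as a restatement of Proposition 2.7, so the proof should do nothing more than unwind the definition of $p(\mathbf{a})$ and invoke the two cases already established. Since $p(\mathbf{a})\in\{0,1\}$ is defined by a dichotomy (proper versus improper), I would split the verification into exactly these two cases and check that the exponent of $rot$ produced by $p(\mathbf{a})$ is precisely the one that Proposition 2.7 tells us lands back in $\mathbf{I}_N^n$.

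First I would treat the proper case: if $\mathbf{a}$ is proper, then by definition $p(\mathbf{a})=0$, so $rot^{p(\mathbf{a})}(Rav(\mathbf{a}))=rot^0(Rav(\mathbf{a}))=Rav(\mathbf{a})$, and Proposition 2.7(1) asserts exactly that $Rav(\mathbf{a})\in\mathbf{I}_N^n$ when $\mathbf{a}$ is proper. Next I would treat the improper case: here $p(\mathbf{a})=1$, so $rot^{p(\mathbf{a})}(Rav(\mathbf{a}))=rot(Rav(\mathbf{a}))$, and Proposition 2.7(2) asserts precisely that $rot(Rav(\mathbf{a}))\in\mathbf{I}_N^n$ when $\mathbf{a}$ is improper. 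Since every increasing rhythm is either proper or improper, these two cases are exhaustive, and in both the claimed membership holds.

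There is essentially no obstacle here, because the biconditional in Proposition 2.7(1) and the implication in Proposition 2.7(2) together cover precisely the two values of $p$. The only thing one must be careful about is bookkeeping: confirm that the convention $rot^0=\id$ is being used (so the proper case genuinely reduces to $Rav(\mathbf{a})$ itself), and that $p$ is well defined, i.e. that properness and improperness are mutually exclusive and exhaustive, which is immediate from Definition 2.9 since the single inequality $N-a_{n-1}>a_0$ either holds or fails. Thus the entire proof is a two-line case distinction citing Proposition 2.7, and I would present it as such without reproving the inequalities (2.20)--(2.21), which have already done all the work.
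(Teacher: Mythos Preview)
Your proposal is correct and matches the paper's approach exactly: the paper simply labels this corollary ``a rephrasing of Proposition 2.7'' and gives no further argument, and your two-case unpacking of $p(\mathbf{a})$ against Proposition 2.7(1) and (2) is precisely that rephrasing made explicit.
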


Now we can define $\mathbf{I}$-version of the discrete average transformation as follows:

\begin{df}
For any $\mathbf{a}\in\mathbf{I}_N^n$, let
\begin{eqnarray*}
Iav(\mathbf{a})=rot^{p(\mathbf{a})}(Rav(\mathbf{a})).
\end{eqnarray*}
\end{df}

\noindent
It follows from Corollary 2.2 that $Iav$ defines a self map on $\mathbf{I}_N^n$:
\begin{eqnarray*}
Iav:\mathbf{I}_N^n\rightarrow\mathbf{I}_N^n.
\end{eqnarray*}
The following proposition shows that $Iav$ is compatible with $\overline{R}av$:

\begin{prp}
We have the following equality
\begin{eqnarray}
\pi|_{\mathbf{I}_N^n}\circ Iav=\overline{R}av\circ\pi|_{\mathbf{I}_N^n},
\end{eqnarray}
namely, the diagram
\begin{eqnarray*}
  \begin{CD}
     \mathbf{I}_N^n @>{\pi|_{\mathbf{I}_N^n}}>> \overline{\mathbf{R}}_N^n \\
  @V{Iav}VV @V{\overline{R}av}VV \\
     \mathbf{I}_N^n @>{\pi|_{\mathbf{I}_N^n}}>> \overline{\mathbf{R}}_N^n \\
  \end{CD}
\end{eqnarray*}
commutes.
\end{prp}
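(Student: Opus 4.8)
The plan is to deduce (2.22) from the already-established descent property (2.4) of $Rav$, by observing that the rotation factor hidden in the definition of $Iav$ is annihilated by the projection $\pi$. In other words, the case distinction between proper and improper rhythms, which was genuinely needed in order to make $Iav$ land back inside $\mathbf{I}_N^n$, becomes irrelevant as soon as one passes to the quotient, because the correcting power of $rot$ lives entirely within a single $\sim_{rot}$-class.

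The single ingredient I would isolate first is the identity
\begin{eqnarray*}
\pi\circ rot=\pi,
\end{eqnarray*}
valid on all of $\mathbf{R}_N^n$. This is immediate from Definition 2.2: for every $\mathbf{a}\in\mathbf{R}_N^n$ one has $rot(\mathbf{a})\sim_{rot}\mathbf{a}$, so $\pi(rot(\mathbf{a}))=\pi(\mathbf{a})$. Iterating, $\pi\circ rot^k=\pi$ for every integer $k$; in the degenerate cases $n=0,1$ this is trivial, since $rot$ is then the identity.

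With this in hand the computation is purely formal. Fix $\mathbf{a}\in\mathbf{I}_N^n$. By Definition 2.7 we have $Iav(\mathbf{a})=rot^{p(\mathbf{a})}(Rav(\mathbf{a}))$, and applying $\pi$ together with the identity above (taken at the exponent $k=p(\mathbf{a})$) gives $\pi(Iav(\mathbf{a}))=\pi(Rav(\mathbf{a}))$. The descent relation (2.4) then rewrites the right-hand side as $\overline{R}av(\pi(\mathbf{a}))$, whence $\pi(Iav(\mathbf{a}))=\overline{R}av(\pi(\mathbf{a}))$. Since $\mathbf{a}$ was an arbitrary element of $\mathbf{I}_N^n$, this is precisely the asserted equality $\pi|_{\mathbf{I}_N^n}\circ Iav=\overline{R}av\circ\pi|_{\mathbf{I}_N^n}$.

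The essential point — and the only place where any thought is required — is recognizing that the value of $p(\mathbf{a})$ never needs to be computed: it enters solely as an exponent of $rot$, and every power of $rot$ is invisible to $\pi$. I therefore do not anticipate any genuine obstacle; the argument is a short diagram chase whose sole prerequisite is the observation $\pi\circ rot=\pi$, with the only bookkeeping being the harmless inclusion of the $n=0,1$ cases where $rot$ degenerates to the identity.
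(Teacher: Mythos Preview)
Your proof is correct and follows essentially the same approach as the paper's: both arguments observe that the power of $rot$ appearing in $Iav(\mathbf{a})=rot^{p(\mathbf{a})}(Rav(\mathbf{a}))$ is absorbed by $\pi$ (since $\pi\circ rot=\pi$), and then invoke the descent relation (2.4). The paper splits explicitly into the proper and improper cases whereas you handle both at once via the exponent $p(\mathbf{a})$; note also that the definition of $Iav$ you cite is Definition~2.10, not Definition~2.7.
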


\begin{proof}
Let $\mathbf{a}$ be an arbitrary element of $\mathbf{I}_N^n$. When $\mathbf{a}$ is proper, it follows from Definition 2.10 and Corollary 2.2 that $Iav(\mathbf{a})=Rav(\mathbf{a})$. Hence the equality (2.22) follows from (2.4). When $\mathbf{a}$ is improper, it follows from Definition 2.10 that $Iav(\mathbf{a})=rot(Rav(\mathbf{a}))$. Hence we have
\begin{eqnarray*}
(\pi|_{\mathbf{I}_N^n}\circ Iav)(\mathbf{a})&=&\pi|_{\mathbf{I}_N^n}(rot(Rav(\mathbf{a})))\\
&=&\pi(rot(Rav(\mathbf{a})))\\
&=&\pi(Rav(\mathbf{a}))\hspace{5mm}(\mbox{by the definition of the quotient map }\pi)\\
&=&(\overline{R}av\circ \pi)(\mathbf{a})\hspace{3mm}(\mbox{by }(2.4))\\
&=&(\overline{R}av\circ \pi|_{\mathbf{I}_N^n})(\mathbf{a}).
\end{eqnarray*}
This completes the proof.
\end{proof}

\noindent
Since $\pi|_{\mathbf{I}_N^n}:\mathbf{I}_N^n\rightarrow \overline{\mathbf{R}}_N^n$ and $s:\overline{\mathbf{R}}_N^n\rightarrow \mathbf{I}_N^n$ are inverse to each other, this proposition implies the following:

\begin{cor}
We have the following equality
\begin{eqnarray}
Iav\circ s=s\circ\overline{R}av,
\end{eqnarray}
namely, the diagram
\begin{eqnarray*}
  \begin{CD}
     \mathbf{I}_N^n @<<{s}< \overline{\mathbf{R}}_N^n \\
  @V{Iav}VV @V{\overline{R}av}VV \\
     \mathbf{I}_N^n @<<{s}< \overline{\mathbf{R}}_N^n \\
  \end{CD}
\end{eqnarray*}
commutes.
\end{cor}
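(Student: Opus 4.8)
The plan is to derive this identity purely formally from Proposition 2.9, exploiting the fact, noted just before the corollary, that $\pi|_{\mathbf{I}_N^n}:\mathbf{I}_N^n\rightarrow\overline{\mathbf{R}}_N^n$ and $s:\overline{\mathbf{R}}_N^n\rightarrow\mathbf{I}_N^n$ are two-sided inverses. First I would record the two inverse relations explicitly. On one side, $\pi|_{\mathbf{I}_N^n}\circ s=id_{\overline{\mathbf{R}}_N^n}$, which holds because $s$ is a section of $\pi$ by Proposition 2.4 and its image is exactly $\mathbf{I}_N^n$. On the other side, $s\circ\pi|_{\mathbf{I}_N^n}=id_{\mathbf{I}_N^n}$, which holds because for $\mathbf{a}\in\mathbf{I}_N^n$ we have $j(\mathbf{a})=0$, so $pr_{\mathbf{I}}(\mathbf{a})=\mathbf{a}$ by (2.7), and hence $s(\pi(\mathbf{a}))=pr_{\mathbf{I}}(\mathbf{a})=\mathbf{a}$.

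Next I would conjugate the commuting square of Proposition 2.9 by these inverses in two steps. Starting from the identity $\pi|_{\mathbf{I}_N^n}\circ Iav=\overline{R}av\circ\pi|_{\mathbf{I}_N^n}$ of (2.22), I compose both sides on the right with $s$ and apply $\pi|_{\mathbf{I}_N^n}\circ s=id_{\overline{\mathbf{R}}_N^n}$ to the right-hand side, obtaining $\pi|_{\mathbf{I}_N^n}\circ Iav\circ s=\overline{R}av$. Then I compose both sides on the left with $s$ and apply $s\circ\pi|_{\mathbf{I}_N^n}=id_{\mathbf{I}_N^n}$ to the left-hand side, which collapses $s\circ\pi|_{\mathbf{I}_N^n}\circ Iav\circ s$ to $Iav\circ s$. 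This yields exactly $Iav\circ s=s\circ\overline{R}av$, which is (2.23). A quick domain check confirms consistency: both $Iav\circ s$ and $s\circ\overline{R}av$ are maps $\overline{\mathbf{R}}_N^n\rightarrow\mathbf{I}_N^n$.

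I do not anticipate any genuine obstacle, since the statement is the standard fact that a commuting square transports across an isomorphism of its two horizontal edges. The only point requiring care is the bookkeeping of composition sides: one must compose with $s$ on the correct side at each stage and invoke the matching inverse relation, keeping in mind that $\pi|_{\mathbf{I}_N^n}\circ s$ and $s\circ\pi|_{\mathbf{I}_N^n}$ are identities on \emph{different} spaces ($\overline{\mathbf{R}}_N^n$ and $\mathbf{I}_N^n$ respectively). Getting these two collapses in the right order is all that the argument amounts to.
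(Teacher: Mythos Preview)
Your argument is correct and is exactly the approach the paper takes: the corollary is stated as an immediate consequence of the fact that $\pi|_{\mathbf{I}_N^n}$ and $s$ are mutual inverses, applied to equation (2.22). One minor slip: the proposition containing (2.22) is Proposition~2.8, not Proposition~2.9.
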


\noindent
This corollary in turn implies the following:

\begin{cor}
We have the following equality
\begin{eqnarray}
Iav\circ pr_{\mathbf{I}}=pr_{\mathbf{I}}\circ Rav,
\end{eqnarray}
namely, the diagram
\begin{eqnarray*}
  \begin{CD}
     \mathbf{I}_N^n @<<{pr_{\mathbf{I}}}< \mathbf{R}_N^n \\
  @V{Iav}VV @V{Rav}VV \\
     \mathbf{I}_N^n @<<{pr_{\mathbf{I}}}< \mathbf{R}_N^n \\
  \end{CD}
\end{eqnarray*}
commutes.
\end{cor}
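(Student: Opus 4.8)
The plan is to prove the identity by a pure diagram chase, assembling it from three facts already established in this section: the description of the section via Proposition 2.4, namely $s\circ\pi=pr_{\mathbf{I}}$; the commutativity $Iav\circ s=s\circ\overline{R}av$ of Corollary 2.3; and the descent relation $\overline{R}av\circ\pi=\pi\circ Rav$ of equation (2.4). No new geometric input about $av_{\mathbf{Z}_N}$ or about properness is needed here; everything reduces to stacking these three commuting squares, so the argument is formal.

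First I would rewrite the left-hand side using the section. By Proposition 2.4 the contraction $pr_{\mathbf{I}}$ factors as $pr_{\mathbf{I}}=s\circ\pi$, whence $Iav\circ pr_{\mathbf{I}}=Iav\circ s\circ\pi$. Next I would push $Iav$ past the section by Corollary 2.3, replacing $Iav\circ s$ with $s\circ\overline{R}av$ to get $s\circ\overline{R}av\circ\pi$. Then I would invoke the descent relation (2.4) to rewrite $\overline{R}av\circ\pi$ as $\pi\circ Rav$, obtaining $s\circ\pi\circ Rav$. Finally, folding the section back up once more via $s\circ\pi=pr_{\mathbf{I}}$ gives $pr_{\mathbf{I}}\circ Rav$, which is exactly the desired equality (2.24).

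The main point to watch is bookkeeping of domains rather than any genuine difficulty. Each map is applied to an arbitrary $\mathbf{a}\in\mathbf{R}_N^n$, and one must check that after the first rewrite the element $\pi(\mathbf{a})\in\overline{\mathbf{R}}_N^n$ is the correct source for both $s$ and $\overline{R}av$, so that Corollary 2.3, which governs $s$ on all of $\overline{\mathbf{R}}_N^n$, legitimately applies to $\pi(\mathbf{a})$. Since all four spaces $\mathbf{R}_N^n$, $\overline{\mathbf{R}}_N^n$, $\mathbf{I}_N^n$, $\mathbf{B}_N^n$ and the intervening maps have already been arranged so that these compositions are defined everywhere, the chase closes without obstruction. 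The only content is that the three auxiliary squares genuinely commute, which is precisely what Proposition 2.4, Corollary 2.3, and equation (2.4) guarantee; consequently the proof is a three-line substitution and I do not anticipate a hard step.
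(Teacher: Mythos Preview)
Your proof is correct and follows essentially the same approach as the paper: rewrite $pr_{\mathbf{I}}=s\circ\pi$, then apply Corollary 2.3 (equation (2.23)) and the descent relation (2.4), and fold back via $s\circ\pi=pr_{\mathbf{I}}$. The paper's proof is literally this same four-line chain of substitutions.
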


\begin{proof}
Since $pr_{\mathbf{I}}=s\circ \pi$, we can compute as follows:
\begin{eqnarray*}
Iav\circ pr_{\mathbf{I}}&=&Iav\circ s\circ\pi\\
&=&s\circ\overline{R}av\circ\pi\hspace{5mm}(\mbox{by }(2.23))\\
&=&s\circ\pi \circ Rav\hspace{5mm}(\mbox{by }(2.4))\\
&=&pr_{\mathbf{I}} \circ Rav\hspace{5mm}(\mbox{by }(2.4)).
\end{eqnarray*}
This completes the proof.
\end{proof}

\subsection{Counterpart of $Rav$ on $\mathbf{B}_N^n$}
We transport the discrete average on $\overline{\mathbf{R}}_N^n$ to the corresponding map on $\mathbf{B}_N^n$:

\begin{df}
The {\rm Boolean average} $Bav:\mathbf{B}_N^n\rightarrow\mathbf{B}_N^n$ is defined by
\begin{eqnarray}
Bav=\overline{R}toB \circ \overline{R}av\circ Bto\overline{R},
\end{eqnarray}
which makes the following diagram commute:
\begin{eqnarray*}
  \begin{CD}
     \overline{\mathbf{R}}_N^n @>{\overline{R}toB}>> \mathbf{B}_N^n \\
  @V{\overline{R}av}VV     @V{Bav}VV \\
     \overline{\mathbf{R}}_N^n @>{\overline{R}toB}>> \mathbf{B}_N^n \\
  \end{CD}
\end{eqnarray*}
Namely the equality
\begin{eqnarray}
Bav\circ\overline{R}toB = \overline{R}toB\circ\overline{R}av
\end{eqnarray}
holds.
\end{df}

\noindent
The following lemma relates $Bav$ with $Iav$:

\begin{lem}
\begin{eqnarray}
Bav=ItoB \circ Iav\circ BtoI.
\end{eqnarray}
\end{lem}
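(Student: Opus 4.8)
The plan is to start from Definition 2.11, unfold $Bav$ completely into the elementary maps between the four spaces, and then collapse the resulting long composite using the compatibility between $Iav$ and $\overline{R}av$ that Corollary 2.3 already supplies. First I would substitute the two factorizations $\overline{R}toB = ItoB\circ s$ from (2.15) and $Bto\overline{R} = \pi\circ BtoI$ from (2.16) into the defining equation (2.25), $Bav = \overline{R}toB\circ\overline{R}av\circ Bto\overline{R}$. This turns the three-fold composite into the five-fold composite
\[
Bav = ItoB\circ s\circ\overline{R}av\circ\pi\circ BtoI .
\]

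The central step is to rewrite the middle block $s\circ\overline{R}av$. Corollary 2.3, equation (2.23), states exactly $Iav\circ s = s\circ\overline{R}av$, so I may replace $s\circ\overline{R}av$ by $Iav\circ s$, pushing $\overline{R}av$ across $s$ and converting it into $Iav$. This yields
\[
Bav = ItoB\circ Iav\circ s\circ\pi\circ BtoI .
\]
By Proposition 2.4 the composite $s\circ\pi$ is precisely the contraction $pr_{\mathbf{I}}$, so the display becomes $Bav = ItoB\circ Iav\circ pr_{\mathbf{I}}\circ BtoI$.

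The one point that genuinely needs care — and the only place the chase could go wrong — is that $pr_{\mathbf{I}}=s\circ\pi$ is \emph{not} the identity on all of $\mathbf{R}_N^n$; it is only a retraction onto $\mathbf{I}_N^n$, equal to the identity merely on that subset, as recorded in (2.8). The resolution is to note that $BtoI$ takes its values in $\mathbf{I}_N^n$ by its definition (2.12), so precomposing $pr_{\mathbf{I}}$ with $BtoI$ changes nothing: $pr_{\mathbf{I}}\circ BtoI = BtoI$. Substituting this into the last display gives $Bav = ItoB\circ Iav\circ BtoI$, which is the claim. I expect no real difficulty beyond this bookkeeping, namely keeping track of which equalities hold globally and which hold only after restriction to $\mathbf{I}_N^n$.
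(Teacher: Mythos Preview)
Your proof is correct and follows essentially the same route as the paper's: expand $Bav$ via (2.15), (2.16), commute the middle block using the compatibility of $Iav$ and $\overline{R}av$, and then collapse $s\circ\pi$ to the identity on the image of $BtoI$. The only cosmetic difference is that the paper invokes (2.22) to move $\pi$ past $\overline{R}av$ (obtaining $ItoB\circ s\circ\pi\circ Iav\circ BtoI$), whereas you invoke the equivalent form (2.23) to move $s$ past $\overline{R}av$; either way the same restriction issue arises and you handle it correctly.
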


\begin{proof}
This is proved by inserting the defining equations (2.15), (2.16) into (2.25) as follows:
\begin{eqnarray*}
Bav&=&\overline{R}toB \circ \overline{R}av\circ Bto\overline{R} \\
&=&ItoB \circ s \circ \overline{R}av\circ \pi \circ BtoI \hspace{10mm}(\mbox{by }(2.15), (2.16))\\
&=&ItoB \circ s \circ \pi \circ Iav\circ BtoI \hspace{10mm}(\mbox{by }(2.22))\\
&=&ItoB \circ Iav\circ BtoI.\hspace{20mm}(\mbox{by Proposition }2.4)
\end{eqnarray*}
This finishes the proof.
\end{proof}

\noindent
Since $ItoB$ is the inverse of $BtoI$, we have the following:

\begin{prp}
We have the equality
\begin{eqnarray}
Bav\circ ItoB = ItoB\circ Iav,
\end{eqnarray}
namely, the diagram
\begin{eqnarray*}
  \begin{CD}
     \mathbf{I}_N^n @>{ItoB}>> \mathbf{B}_N^n \\
  @V{Iav}VV     @V{Bav}VV \\
     \mathbf{I}_N^n @>{ItoB}>> \mathbf{B}_N^n \\
  \end{CD}
\end{eqnarray*}
commutes.
\end{prp}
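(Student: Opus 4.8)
The plan is to derive this directly from Lemma 2.2 together with the fact that $ItoB$ and $BtoI$ are mutually inverse bijections. Lemma 2.2 already gives the factorization $Bav = ItoB \circ Iav \circ BtoI$, so the remaining work is purely formal: I would compose this identity on the right with $ItoB$ and cancel the resulting $BtoI \circ ItoB$.

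Concretely, first I would write down the right-composition with $ItoB$ of both sides of Lemma 2.2, obtaining
\begin{eqnarray*}
Bav\circ ItoB = ItoB \circ Iav\circ BtoI\circ ItoB.
\end{eqnarray*}
The one substantive point is to identify the correct inversion identity to apply to the innermost pair. Since the composition $BtoI\circ ItoB$ sends $\mathbf{I}_N^n$ to itself, it is governed by equation (2.14), namely $BtoI\circ ItoB = id_{\mathbf{I}_N^n}$ (as opposed to (2.13), which would be relevant if the pair appeared in the other order). Substituting this in collapses the right-hand side to $ItoB\circ Iav\circ id_{\mathbf{I}_N^n} = ItoB\circ Iav$, which is exactly the desired equality.

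There is essentially no hard step here; the statement is a mechanical corollary of Lemma 2.2. The only thing to watch is bookkeeping on the order of composition, since the bijectivity of the pair $(ItoB, BtoI)$ is expressed by two separate equations (2.13) and (2.14) depending on direction, and one must invoke the one matching the end at which the pair occurs. Once that is pinned down, the commutativity of the displayed square follows immediately, and the map $Bav$ is thereby realized as the transport of $Iav$ along the bijection $ItoB$.

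\begin{proof}
By Lemma 2.2 we have $Bav = ItoB \circ Iav\circ BtoI$. Composing both sides on the right with $ItoB$ yields
\begin{eqnarray*}
Bav\circ ItoB &=& ItoB \circ Iav\circ BtoI\circ ItoB\\
&=& ItoB \circ Iav,
\end{eqnarray*}
where the second equality uses the relation $BtoI\circ ItoB = id_{\mathbf{I}_N^n}$ from (2.14). This establishes (2.29) and hence the commutativity of the diagram.
\end{proof}
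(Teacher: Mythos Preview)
Your proof is correct and follows exactly the reasoning the paper intends: the paper does not give a separate proof but simply remarks, just before stating the proposition, that it follows from Lemma~2.1 because $ItoB$ is the inverse of $BtoI$. Two minor label corrections: the lemma you cite is Lemma~2.1 (not 2.2), and the displayed equation in the proposition is (2.28) (not (2.29)).
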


\noindent
Thus we have accomplised the purpose of this section, namely our construction of three maps $\overline{R}av, Iav, Bav$, each of which corresponds to the discrete average map $Rav$ on $\mathbf{R}_N$.\\
The following example indicates how we can compute $Iav$ and $Bav$:\\
Example 2.1. Let $N=8$ and $n=3$. For the vector $\mathbf{v}=(0,0,1,1,0,0,0,1)\in\mathbf{B}_8^3$, we can compute $Bav(\mathbf{v})$ as follows:
\begin{eqnarray*}
Bav(\mathbf{v})&=&(ItoB\circ Iav \circ BtoI )(0,0,1,1,0,0,0,1)\hspace{5mm}(\mbox{by Lemma 2.1)}\\
&=&(ItoB(Iav(2,3,7))\hspace{35mm}(\mbox{by (2.12)})\\
&=&ItoB(rot(Rav(2,3,7)))\hspace{15mm}(\mbox{since }(2,3,7)\mbox{ is improper})\\
&=&ItoB(rot(2,5,0))\\
&=&ItoB(0,2,5)\\
&=&(1,0,1,0,0,1,0,0),
\end{eqnarray*}
the last quality coming from (2.11) and Definition 2.7.

\section{Cyclicity of the components of the Boolean average}
Our main theme in this paper is to investigate the properties of the Boolean average $Bav$. As a map from $\mathbf{F}_2^N$ to itself, $Bav$ has $N$ components each of which is a Boolean function on $\mathbf{F}_2^N$. In this section we show that only one of these $N$ Boolean functions determines the shapes of the other $N-1$ functions. For this purpose we need to check the functorialities of several maps.

\subsection{Boolean counterpart of the translation map $tr$ on $\mathbf{R}_N^n$}

In [2], we introduce a self map $tr:\mathbf{R}_N^n\rightarrow\mathbf{R}_N^n$ when $n\geq 2$, which is defined by
\begin{eqnarray*}
tr(a_0,\cdots,a_{n-1})=(a_0+_N1,\cdots,a_{n-1}+_N1)
\end{eqnarray*}
for any $(a_0,\cdots,a_{n-1})\in\mathbf{R}_N^n$. When $n=1$ or $n=0$, we supply this with the following natural ones:
\begin{eqnarray*}
tr(a_0)&=&(a_0+_N1),\\
tr(\phi)&=&\phi.
\end{eqnarray*}
Since this map $tr$ commutes with the map $rot$ for any $n\geq 0$, It descends to a self map on $\overline{\mathbf{R}}_N^n$, which we denote by $\overline{R}tr$:
\begin{eqnarray*}
&&\overline{R}tr: \overline{\mathbf{R}}_N^n\rightarrow \overline{\mathbf{R}}_N^n.
\end{eqnarray*}
\noindent
Therefore we have
\begin{eqnarray}
\overline{R}tr\circ \pi&=&\pi\circ tr,
\end{eqnarray}
namely, we have the following commutative diagram:
\begin{eqnarray*}
  \begin{CD}
     \mathbf{R}_N^n @>{\pi}>> \overline{\mathbf{R}}_N^n \\
  @V{tr}VV @V{\overline{R}tr}VV \\
     \mathbf{R}_N^n @>{\pi}>> \overline{\mathbf{R}}_N^n \\
   \end{CD}
\end{eqnarray*}

\noindent
Next we show that the map $tr$ commutes with $Rav$:

\begin{prp}
The two self maps $tr$ and $Rav$ on $\mathbf{R}_N^n$ commute with each other:
\begin{eqnarray*}
tr\circ Rav=Rav\circ tr,
\end{eqnarray*}
namely, we have the following commutative diagram:
\begin{eqnarray*}
  \begin{CD}
     \mathbf{R}_N^n @>{tr}>> \mathbf{R}_N^n \\
  @V{Rav}VV @V{Rav}VV \\
     \mathbf{R}_N^n @>{tr}>> \mathbf{R}_N^n \\
  \end{CD}
\end{eqnarray*}
\end{prp}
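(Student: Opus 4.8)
The plan is to reduce the asserted coordinatewise commutativity to a single translation-equivariance property of the scalar average $av_{\mathbf{Z}_N}$. Concretely, the first step I would take is to isolate and prove the key lemma that, for every $a,b\in\mathbf{Z}_N$,
\begin{eqnarray*}
av_{\mathbf{Z}_N}(a+_N1,b+_N1)=av_{\mathbf{Z}_N}(a,b)+_N1.
\end{eqnarray*}
Granting this, the proposition follows at once by applying the identity in each coordinate of the tuple.

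To establish the lemma, the crucial observation is that the $\mathbf{Z}_N$-difference is invariant under simultaneous translation: since $+_N$ and $-_N$ are exactly the group operations of $\mathbf{Z}/N\mathbf{Z}$ transported to $\mathbf{Z}_N$ via least nonnegative remainders, we have
\begin{eqnarray*}
(b+_N1)-_N(a+_N1)=b-_Na.
\end{eqnarray*}
Indeed, both sides lie in $\{0,1,\cdots,N-1\}$ and are congruent to $(b+1)-(a+1)=b-a$ modulo $N$, hence coincide. Consequently the floor term in the definition (1.3) is unchanged, and using the associativity and commutativity of $+_N$ we obtain
\begin{eqnarray*}
av_{\mathbf{Z}_N}(a+_N1,b+_N1)=(a+_N1)+_N\left\lfloor\frac{b-_Na}{2}\right\rfloor=\left(a+_N\left\lfloor\frac{b-_Na}{2}\right\rfloor\right)+_N1=av_{\mathbf{Z}_N}(a,b)+_N1,
\end{eqnarray*}
which is the lemma.

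With the lemma in hand the remaining steps are routine. For $n\geq 2$ and $\mathbf{a}=(a_0,\cdots,a_{n-1})$, each coordinate of $Rav(tr(\mathbf{a}))$ has the form $av_{\mathbf{Z}_N}(a_i+_N1,a_{i+_n1}+_N1)$, which by the lemma equals $av_{\mathbf{Z}_N}(a_i,a_{i+_n1})+_N1$, the corresponding coordinate of $tr(Rav(\mathbf{a}))$; hence the two tuples agree. The cases $n=0,1$ are immediate, since there both $Rav$ and $tr$ reduce to the identity (respectively, to adding $1$ in the singleton case), so the equality holds by direct inspection. I do not anticipate a serious obstacle: the only point demanding care is the translation invariance of $-_N$, and this is a direct consequence of the group law. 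The conceptual heart of the matter is simply that $av_{\mathbf{Z}_N}(a,b)$ depends on its arguments only through the base point $a$ and the directed difference $b-_Na$, and translation by $1$ shifts the former while leaving the latter fixed.
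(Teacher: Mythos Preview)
Your proof is correct and follows essentially the same route as the paper: isolate the translation-equivariance identity $av_{\mathbf{Z}_N}(a+_N1,b+_N1)=av_{\mathbf{Z}_N}(a,b)+_N1$ and apply it coordinatewise. The paper simply asserts this identity as following from the definition, whereas you supply the underlying reason (translation-invariance of $-_N$) and also spell out the trivial cases $n=0,1$; otherwise the arguments coincide.
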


\begin{proof}
It follows from the definition of the $\mathbf{Z}_N$-average that
\begin{eqnarray}
av_{\mathbf{Z}_N}(a+_N1,b+_N1)&=&av_{\mathbf{Z}_N}(a,b)+_N1.
\end{eqnarray}
Therefore for any $\mathbf{a}=(a_0,\cdots, a_{n-1})\in\mathbf{R}_N^n$, we have
\begin{eqnarray*}
(Rav\circ tr)(\mathbf{a})&=&Rav(tr(a_0,\cdots,a_{n-1}))\\
&=&Rav(a_0+_N1,\cdots,a_{n-1}+_N1)\\
&=&(av_{\mathbf{Z}_N}(a_0+_N1,a_1+_N1),\cdots,av_{\mathbf{Z}_N}(a_{n-1}+_N1,a_0+_N1))\\
&=&(av_{\mathbf{Z}_N}(a_0,a_1)+_N1,\cdots,av_{\mathbf{Z}_N}(a_{n-1},a_0)+_N1)\hspace{5mm}(\mbox{by } (3.2))\\
&=&(tr\circ Rav)(\mathbf{a}).
\end{eqnarray*}
This completes the proof.
\end{proof}

\noindent
Since we have seen that both maps $tr$ and $Rav$ on $\mathbf{R}_N^n$ descend to the corresponding maps on $\overline{\mathbf{R}}_N^n$, this proposition implies the following:

\begin{cor}
For the two self maps $\overline{R}av$ and $\overline{R}tr$, the equality
\begin{eqnarray}
\overline{R}av\circ\overline{R}tr=\overline{R}tr\circ\overline{R}av,
\end{eqnarray}
holds, namely, the following diagram commutes:
\begin{eqnarray*}
  \begin{CD}
     \overline{\mathbf{R}}_N^n @>{\overline{R}tr}>> \overline{\mathbf{R}}_N^n \\
  @V{\overline{R}av}VV @V{\overline{R}av}VV \\
     \overline{\mathbf{R}}_N^n @>{\overline{R}tr}>> \overline{\mathbf{R}}_N^n \\
  \end{CD}
\end{eqnarray*}
\end{cor}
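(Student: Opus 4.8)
The plan is to reduce the claim to Proposition 3.1 by a diagram chase, exploiting the surjectivity of the quotient map $\pi:\mathbf{R}_N^n\rightarrow\overline{\mathbf{R}}_N^n$. Since $\pi$ is surjective, every element of $\overline{\mathbf{R}}_N^n$ is of the form $\pi(\mathbf{a})$ for some $\mathbf{a}\in\mathbf{R}_N^n$; hence it suffices to verify that the two self maps $\overline{R}av\circ\overline{R}tr$ and $\overline{R}tr\circ\overline{R}av$ agree after precomposition with $\pi$. Right-cancellation of the surjection $\pi$ then yields (3.3).

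First I would rewrite $\overline{R}av\circ\overline{R}tr\circ\pi$ by pushing $\pi$ leftward through the two descent relations already established, namely (3.1) in the form $\overline{R}tr\circ\pi=\pi\circ tr$ and (2.4) in the form $\overline{R}av\circ\pi=\pi\circ Rav$. This brings everything up to the level of $\mathbf{R}_N^n$, where Proposition 3.1 allows me to interchange $Rav$ and $tr$; running the same two relations in reverse then pushes $\pi$ back out to the right. Concretely, I would carry out the computation
\begin{eqnarray*}
\overline{R}av\circ\overline{R}tr\circ\pi
&=&\overline{R}av\circ\pi\circ tr\hspace{12mm}(\mbox{by }(3.1))\\
&=&\pi\circ Rav\circ tr\hspace{12mm}(\mbox{by }(2.4))\\
&=&\pi\circ tr\circ Rav\hspace{12mm}(\mbox{by Proposition }3.1)\\
&=&\overline{R}tr\circ\pi\circ Rav\hspace{10mm}(\mbox{by }(3.1))\\
&=&\overline{R}tr\circ\overline{R}av\circ\pi.\hspace{10mm}(\mbox{by }(2.4))
\end{eqnarray*}
Thus the two composites coincide on the image of $\pi$, which is all of $\overline{\mathbf{R}}_N^n$, and (3.3) follows.

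There is essentially no serious obstacle here: the entire mathematical content has already been absorbed into Proposition 3.1, and the remainder is a formal transport of that identity across the quotient. The only point demanding any care is the justification that $\pi$ is right-cancellable, i.e.\ that $f\circ\pi=g\circ\pi$ forces $f=g$ for self maps $f,g$ of $\overline{\mathbf{R}}_N^n$; this is immediate from the surjectivity of the quotient map, since its fibers exhaust $\overline{\mathbf{R}}_N^n$. (One could equally give an argument representative by representative, choosing the canonical representative in $\mathbf{I}_N^n$ via the section $s$, but the cancellation argument is cleaner and avoids any dependence on properness.)
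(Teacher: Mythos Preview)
Your proof is correct and is precisely the standard descent argument the paper has in mind: the paper does not write out a proof at all, merely remarking that since both $tr$ and $Rav$ descend to $\overline{\mathbf{R}}_N^n$, Proposition 3.1 immediately implies the corollary. Your diagram chase via (3.1), (2.4), and the surjectivity of $\pi$ makes this implicit reasoning explicit, so the approach is essentially the same.
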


Next we define a Boolean counterpart $Btr$ of $tr$:

\begin{df}
Let $Btr:\mathbf{B}_N\rightarrow\mathbf{B}_N$ denote the map defined by
\begin{eqnarray*}
Btr(v_0,v_1,v_2,\cdots,v_{N-1})=(v_{N-1},v_0,v_1,\cdots,v_{N-2})
\end{eqnarray*}
for any $\mathbf{v}=(v_0,v_1,v_2,\cdots,v_{N-1})\in\mathbf{B}_N$.
\end{df}

\noindent
The following proposition shows the compatibility of $Btr$ and $tr$:

\begin{prp}
We have the equality of maps:
\begin{eqnarray}
Btr\circ RtoB=RtoB\circ tr,
\end{eqnarray}
namely, the following diagram commutes:
\begin{eqnarray*}
  \begin{CD}
     \mathbf{R}_N^n @>{RtoB}>> \mathbf{B}_N^n \\
  @V{tr}VV @V{Btr}VV \\
     \mathbf{R}_N^n @>{RtoB}>> \mathbf{B}_N^n \\
  \end{CD}
\end{eqnarray*}

\end{prp}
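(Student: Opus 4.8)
The plan is to verify the asserted identity of maps componentwise, by fixing an arbitrary rhythm $\mathbf{a}=(a_0,\dots,a_{n-1})\in\mathbf{R}_N^n$ and comparing the $i$-th entry of each side for every $i\in\mathbf{Z}_N$. Write $\mathbf{v}=(v_0,\dots,v_{N-1})=RtoB(\mathbf{a})$, so that by Definition 2.7 we have $v_i=1$ precisely when $i$ occurs among $a_0,\dots,a_{n-1}$; equivalently $supp(\mathbf{v})=\{a_0,\dots,a_{n-1}\}$.

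First I would compute the right-hand side. Since $tr$ adds $1$ modulo $N$ to every onset, the support of $RtoB(tr(\mathbf{a}))$ is $\{a_0+_N1,\dots,a_{n-1}+_N1\}$. An index $i\in\mathbf{Z}_N$ lies in this set exactly when $i-_N1$ is one of the $a_k$, so the $i$-th entry of $RtoB(tr(\mathbf{a}))$ equals $v_{i-_N1}$ for every $i$. Next I would compute the left-hand side directly from Definition 3.1: the $i$-th entry of $Btr(\mathbf{v})$ is $v_{i-1}$ for $1\le i\le N-1$ and is $v_{N-1}$ when $i=0$; in both cases this is exactly $v_{i-_N1}$. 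Comparing the two computations entry by entry yields $Btr(RtoB(\mathbf{a}))=RtoB(tr(\mathbf{a}))$, which is the claim.

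There is no deep obstacle here; the argument is a routine index-chase. The one point deserving care is the cyclic wraparound at index $0$: I must check that the modular subtraction $-_N$ appearing in the description of $tr$ matches the single right-rotation-with-wraparound built into $Btr$, i.e.\ that the entry of $Btr(\mathbf{v})$ at position $0$ correctly picks up $v_{N-1}=v_{0-_N1}$ rather than an out-of-range index. The degenerate cases are immediate: for $n=0$ both sides are the zero vector, and for $n=1$ the same support computation applies verbatim to the singleton $(a_0)$.
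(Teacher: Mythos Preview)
Your proof is correct and follows essentially the same approach as the paper's: both fix an arbitrary $\mathbf{a}\in\mathbf{R}_N^n$, set $\mathbf{v}=RtoB(\mathbf{a})$, and verify entrywise that the $i$-th coordinate of each side equals $v_{i-_N1}$. The only cosmetic differences are the order in which the two sides are computed and your explicit mention of the degenerate cases $n=0,1$.
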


\begin{proof}
For any $\mathbf{a}=(a_0, a_1,\cdots, a_{n-1})\in\mathbf{R}_N^n$, Definition 2.7 implies that
\begin{eqnarray*}
RtoB(\mathbf{a})=\mathbf{v}=(v_0,v_1,\cdots,v_{N-1})\in \mathbf{B}_N^n,
\end{eqnarray*}
where
\begin{eqnarray*}
v_i=\left\{
\begin{array}{ll}
1, & \mbox{ if $i\in \mathbf{a}$},\\
0, & \mbox{ if $i\not\in \mathbf{a}$}, \\
\end{array}
\right.
\end{eqnarray*}
Therefore the left hand side of (3.4) maps $\mathbf{a}$ to
\begin{eqnarray*}
Btr(RtoB(\mathbf{a}))&=&Btr(v_0,v_1,\cdots,v_{N-1})\\
&=&(v_{N-1},v_0,\cdots,v_{N-2}).
\end{eqnarray*}
Let $w_i=v_{i-_N1}$ for any $i\in\mathbf{Z}_N$. Then we have
\begin{eqnarray*}
\mathbf{w}=(w_0,w_1,\cdots,w_{N-1})=(v_{N-1},v_0,\cdots,v_{N-2}),
\end{eqnarray*}
namely we have 
\begin{eqnarray*}
\mathbf{w}=Btr(RtoB(\mathbf{a})).
\end{eqnarray*}
Notice that we have the following series of equivalences:
\begin{eqnarray*}
w_i=1 &\Leftrightarrow& v_{i-_N1}=1 \\
&\Leftrightarrow& i-_N1\in \mathbf{a} \\
&\Leftrightarrow& i\in tr(\mathbf{a}). 
\end{eqnarray*}
It follows that
\begin{eqnarray*}
\mathbf{w}=RtoB(tr(\mathbf{a})).
\end{eqnarray*}
Hence we have
\begin{eqnarray*}
Btr\circ RtoB=RtoB\circ tr,
\end{eqnarray*}
This completes the proof.
\end{proof}

\noindent
As an immediate consequence, we have the following:

\begin{cor}
The equality of composite maps
\begin{eqnarray}
Btr\circ \overline{R}toB=\overline{R}toB\circ \overline{R}tr
\end{eqnarray}
holds, namely, the following diagram commutes:
\begin{eqnarray*}
  \begin{CD}
     \overline{\mathbf{R}}_N^n @>{\overline{R}toB}>> \mathbf{B}_N^n \\
  @V{\overline{R}tr}VV @V{Btr}VV \\
     \overline{\mathbf{R}}_N^n @>{\overline{R}toB}>> \mathbf{B}_N^n \\
  \end{CD}
\end{eqnarray*}
\end{cor}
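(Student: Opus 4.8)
The plan is to reduce the claimed equality of maps out of $\overline{\mathbf{R}}_N^n$ to the commutative squares already established, by precomposing with the quotient projection $\pi$. The key observation is that $\pi:\mathbf{R}_N^n\rightarrow\overline{\mathbf{R}}_N^n$ is surjective, being the canonical quotient by the relation $\sim_{rot}$; hence two maps $\overline{\mathbf{R}}_N^n\rightarrow\mathbf{B}_N^n$ coincide if and only if they agree after composition with $\pi$ on the right. Thus it suffices to verify that $Btr\circ\overline{R}toB\circ\pi=\overline{R}toB\circ\overline{R}tr\circ\pi$, and then cancel $\pi$.

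First I would compute the left-hand composite. Using Proposition 2.6 in the form $\overline{R}toB\circ\pi=RtoB$, the left-hand side becomes $Btr\circ RtoB$, and then equation (3.4) of Proposition 3.2 rewrites this as $RtoB\circ tr$. Next I would compute the right-hand composite. Applying the descent relation (3.1), namely $\overline{R}tr\circ\pi=\pi\circ tr$, turns $\overline{R}toB\circ\overline{R}tr\circ\pi$ into $\overline{R}toB\circ\pi\circ tr$, and a second application of Proposition 2.6 converts this into $RtoB\circ tr$ as well. Since both composites equal $RtoB\circ tr$, they agree on all of $\mathbf{R}_N^n$, and surjectivity of $\pi$ then yields the desired identity $Btr\circ\overline{R}toB=\overline{R}toB\circ\overline{R}tr$.

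There is essentially no serious obstacle here: the statement is a purely formal gluing of the two commutative squares recorded in Proposition 2.6 and Proposition 3.2 together with the descent relation (3.1). The only point I would make sure to state explicitly is the right-cancellability of $\pi$, i.e.\ its surjectivity; this is immediate because $\pi$ projects onto a quotient set, so every class in $\overline{\mathbf{R}}_N^n$ has a representative in $\mathbf{R}_N^n$. Everything else is a two-step diagram chase on each side, exactly parallel to how Corollary 3.1 was deduced from Proposition 3.1.
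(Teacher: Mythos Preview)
Your proposal is correct and follows essentially the same approach as the paper's proof. The paper argues pointwise by choosing an arbitrary representative $\mathbf{a}$ of a class $x\in\overline{\mathbf{R}}_N^n$, which is exactly your use of the surjectivity of $\pi$; the chain of equalities in the paper invokes (3.1), Proposition~2.6, and (3.4) in the same way you do, arriving at $RtoB\circ tr$ on both sides.
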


\begin{proof}
For any $x\in\overline{\mathbf{R}}_N^n$, let $\mathbf{a}$ be an arbitrary representative of the equivalence class $x$. Then we have
\begin{eqnarray*}
(\overline{R}toB\circ \overline{R}tr)(x)&=&(\overline{R}toB\circ \overline{R}tr)(\pi(\mathbf{a}))\\
&=&(\overline{R}toB\circ \overline{R}tr \circ \pi)(\mathbf{a})\\
&=&(\overline{R}toB\circ \pi \circ tr)(\mathbf{a})\hspace{10mm}(\mbox{by }(3.1))\\
&=&(RtoB\circ tr)(\mathbf{a})\hspace{16mm}(\mbox{by Proposition }2.6)\\
&=&(Btr\circ RtoB)(\mathbf{a})\hspace{13mm}(\mbox{by }(3.4))\\
&=&(Btr\circ \overline{R}toB\circ \pi)(\mathbf{a})\hspace{8mm}(\mbox{by Propotision }2.6)\\
&=&(Btr\circ \overline{R}toB)(x).
\end{eqnarray*}
This completes the proof of Corollary 3.2.
\end{proof}

\subsection{Cyclicity of the Boolean average}
Now we can prove the main result of this section, which asserts that only one of the components of $Bav$ determines the others completely:

\begin{thm}
$(1)$ The two self maps $Bav, Btr$ of the space $\mathbf{B}_N^n$ commute with each other:
\begin{eqnarray}
Btr\circ Bav=Bav\circ Btr,
\end{eqnarray}
namely, the following diagram commutes:
\begin{eqnarray*}
  \begin{CD}
     \mathbf{B}_N^n @>{Btr}>> \mathbf{B}_N^n \\
  @V{Bav}VV @V{Bav}VV \\
     \mathbf{B}_N^n @>{Btr}>> \mathbf{B}_N^n \\
  \end{CD}
\end{eqnarray*}
$(2)$ For any $i\in\mathbf{Z}_N$, we denote by $Bav_N^i$ the composite $pr_i\circ Bav:\mathbf{B}_N^n\rightarrow\mathbf{F}_2$, where $pr_i:\mathbf{F}_2^N\rightarrow\mathbf{F}_2$ is the projection onto the $i$-th factor. Then we have
\begin{eqnarray}
Btr^*(Bav_N^i)=Bav_N^{i-_N1}.
\end{eqnarray}
Here the left hand side means the pull-back of the function $Bav_N^i$ through the self map $Btr$ on $\mathbf{B}_N^n$, namely the composite $Bav_N^i\circ Btr$.
\end{thm}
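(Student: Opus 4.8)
The plan is to establish both assertions by transporting the already-proven commutativity relations through the bijections $\overline{R}toB$ and $Bto\overline{R}$. For part $(1)$, the key observation is that $Bav$ and $Btr$ are defined as conjugates of $\overline{R}av$ and $\overline{R}tr$ by these inverse maps. The ingredients are already in place: Corollary 3.1 gives $\overline{R}av\circ\overline{R}tr=\overline{R}tr\circ\overline{R}av$, equation (2.26) gives the intertwining $Bav\circ\overline{R}toB=\overline{R}toB\circ\overline{R}av$, and Corollary 3.2 gives $Btr\circ\overline{R}toB=\overline{R}toB\circ\overline{R}tr$. First I would write $Btr\circ Bav\circ\overline{R}toB$ and push the $\overline{R}toB$ leftward through each factor using these two intertwining relations, converting the composite of Boolean maps into $\overline{R}toB\circ\overline{R}tr\circ\overline{R}av$. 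Then I would apply Corollary 3.1 to swap the two barred maps, and push $\overline{R}toB$ back out to the left in the reverse order, arriving at $Bav\circ Btr\circ\overline{R}toB$. Since $\overline{R}toB$ is a bijection (Proposition 2.5), it is right-cancellable, and the desired equality (3.6) follows.

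For part $(2)$, the goal is to read off what part $(1)$ says coordinate-by-coordinate. First I would unwind the definitions: $Bav_N^i=pr_i\circ Bav$, so the pull-back $Btr^*(Bav_N^i)=Bav_N^i\circ Btr=pr_i\circ Bav\circ Btr$. Using the commutativity (3.6) just proven, I would rewrite this as $pr_i\circ Btr\circ Bav$. The crucial remaining step is the purely combinatorial identity $pr_i\circ Btr=pr_{i-_N1}$, which follows immediately from Definition 3.1: since $Btr$ sends $(v_0,\dots,v_{N-1})$ to $(v_{N-1},v_0,\dots,v_{N-2})$, the $i$-th entry of the output is $v_{i-_N1}$, so reading off the $i$-th coordinate after applying $Btr$ is the same as reading off the $(i-_N1)$-th coordinate beforehand. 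Substituting this gives $pr_{i-_N1}\circ Bav=Bav_N^{i-_N1}$, which is exactly (3.7).

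I expect the only genuine subtlety to be bookkeeping in the direction of composition when pushing $\overline{R}toB$ through the chain in part $(1)$: one must be careful to use (2.26) and (3.5) in the correct orientation, since each intertwines a Boolean map with a barred map across $\overline{R}toB$, and applying them in the wrong order would produce the inverse maps instead. Everything else is formal, relying entirely on the bijectivity established in Proposition 2.5 and the two commutation relations from Corollaries 3.1 and 3.2. The index shift $i\mapsto i-_N1$ in part $(2)$ is forced by the defining formula for $Btr$ and requires no further argument beyond the elementary identity $pr_i\circ Btr=pr_{i-_N1}$.
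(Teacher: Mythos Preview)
Your proposal is correct and follows essentially the same route as the paper's proof: for part~(1) you compose with the bijection $\overline{R}toB$, push it through using (2.26) and (3.5), apply Corollary~3.1 to swap $\overline{R}tr$ and $\overline{R}av$, and then push back and cancel; for part~(2) you use the elementary identity $pr_i\circ Btr=pr_{i-_N1}$ together with (3.6), exactly as the paper does. The only slip is the phrase ``push $\overline{R}toB$ back out to the left'' --- you mean back to the right (converting the barred maps back into their Boolean counterparts) --- but the mathematical content is unambiguous and matches the paper's argument step for step.
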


\begin{proof}
(1) Composing the left hand side of (3.6) with the bijection $\overline{R}toB$, we compute as follows:
\begin{eqnarray*}
Btr\circ Bav\circ\overline{R}toB&=&Btr\circ\overline{R}toB\circ \overline{R}av\hspace{10mm}(\mbox{by }(2.26))\\
&=&\overline{R}toB\circ \overline{R}tr\circ \overline{R}av\hspace{10mm}(\mbox{by }(3.5))\\
&=&\overline{R}toB\circ \overline{R}av\circ \overline{R}tr\hspace{10mm}(\mbox{by }(3.3))\\
&=&Bav\circ \overline{R}toB\circ \overline{R}tr\hspace{10mm}(\mbox{by }(2.26))\\
&=&Bav\circ Btr\circ \overline{R}toB\hspace{10mm}(\mbox{by }(3.5))\\
\end{eqnarray*}
This shows that the equality (3.6) holds.\\
(2) Note that we have
\begin{eqnarray}
pr_i\circ Btr=pr_{i-_N1},
\end{eqnarray}
since for any $\mathbf{v}=(v_0,v_1,\cdots,v_{N-1})\in\mathbf{B}_N^n$, we have
\begin{eqnarray*}
(pr_i\circ Btr)(\mathbf{v})&=&pr_i(v_{N-1},v_0,v_1,\cdots,v_{N-2})\\
&=&v_{i-_N1}\\
&=&pr_{i-_N1}(\mathbf{v}).
\end{eqnarray*}
Hence we can compute as follows:
\begin{eqnarray*}
Btr^*(Bav_N^i)&=&Bav_N^i\circ Btr\\
&=&pr_i\circ Bav\circ Btr\\
&=&pr_i\circ Btr\circ Bav \hspace{12mm}(\Leftarrow \mbox{ by }(3.6))\\
&=&pr_{i-_N1}\circ Bav \hspace{15mm}(\Leftarrow \mbox{ by }(3.8))\\
&=&Bav_N^{i-_N1}.
\end{eqnarray*}
This completes the proof.
\end{proof}

\begin{rem}
In concrete terms, the equality (3.7) means that every $Bav_N^i$ $(i\in\mathbf{Z}_N)$ can be obtained by a cyclic coordinate change from $Bav_N^0$.
\end{rem}

\section{Polynomial expression of the Boolean average}
In this section we investigate how the Boolean average $Bav$ is expressed as a polynomial map on $\mathbf{B}_N$. First we recall some standard method which translates any propositional functions into polynomial expressions.

\subsection{Algebraic standard form of propositional functions}
In this subsection we recall some standard facts about propositional functions. See [3,4] for details.\\
Let the truth-values $0$ and $1$ stand for "false" and "true", respectively. Then any proposition can be regarded as an $\mathbf{F}_2$-valued function on $\mathbf{F}_2^N$ for an appropriate $N$. For example, the logical connective "and" corresponds to the multiplication operator "$\times$", as is observed from the truth table of "and". On the other hand, the addition operator "$+$" corresponds to the connective "xor (exclusive or)". Since any proposition can be expressed as a compound of these two connectives, it corresponds to a Boolean function of a certain number of variables. In view of the fact that we express a polynomial in several variables usually as a sum of monomials, the disjunctive normal form of logical formula plays a fundamental role for our purpose. We recall its construction by a typical example.\\
Example 4.1. The disjunctive normal form of $P=(x \vee y)\wedge (y\vee z)$. The truth table of $P$ is given by
\begin{eqnarray*}
\begin{array}{lccccccccc}
\mbox{number} & \vline & x & y & z & \vline & x\vee y & y\vee z & \vline &  P \\
\hline
(1) & \vline & 0 & 0 & 0 & \vline & 0 & 0 & \vline &  0 \\
(2) & \vline & 0 & 0 & 1 & \vline & 0 & 1 & \vline &  0 \\
(3) & \vline & 0 & 1 & 0 & \vline & 1 & 1 & \vline &  1 \\
(4) & \vline & 0 & 1 & 1 & \vline & 1 & 1 & \vline &  1 \\
(5) & \vline & 1 & 0 & 0 & \vline & 1 & 0 & \vline &  0 \\
(6) & \vline & 1 & 0 & 1 & \vline & 1 & 1 & \vline &  1 \\
(7) & \vline & 1 & 1 & 0 & \vline & 1 & 1 & \vline &  1 \\
(8) & \vline & 1 & 1 & 1 & \vline & 1 & 1 & \vline &  1 \\
\end{array}
\end{eqnarray*}
\begin{center}
Table 4.1. Truth table for $P=(x \vee y)\wedge (y\vee z)$
\end{center}
There are five rows, named (3), (4), (6), (7), (8), for which the value of $P$ equals 1. We associate a term with each of these five rows. For example, the third row gives $P$ the value 1. We make a constituent of the corresponding term by the following rule. Here "$w$" stands for any one of the variables $x,y,z$:
\begin{eqnarray*}
&&\mbox{{\it If the value of $w$ is 1, then use "$w$", }}\\
&&\mbox{{\it otherwise use "$\overline{w}$" as its constituent.}}
\end{eqnarray*}
Therefore the third row provides us with the term $\overline{x}y\overline{z}$, since $(x,y,z)=(0,1,0)$. Connecting the term $\overline{x}y\overline{z}$ with the other four terms obtained from the remaining four rows, we obtain the following disjunctive normal form of the proposition $P$:
\begin{eqnarray*}
P=\overline{x}y\overline{z}\vee \overline{x}yz \vee x\overline{y}z \vee xy\overline{z} \vee xyz.
\end{eqnarray*}
Note that we can replace all "$\vee$" by "$\underline{\vee}$", since each term on the right hand side contradicts to each other. Therefore we can rewrite this as
\begin{eqnarray*}
P=\overline{x}y\overline{z}\hspace{1mm}\underline{\vee}\hspace{1mm} \overline{x}yz \hspace{1mm}\underline{\vee}\hspace{1mm} x\overline{y}z \hspace{1mm}\underline{\vee}\hspace{1mm} xy\overline{z} \hspace{1mm}\underline{\vee}\hspace{1mm} xyz.
\end{eqnarray*}
By letting $\overline{w}=1+w$ for each variable $w\in\{x,y,z\}$, and by reducing the result modulo 2, we arrive at the following expression $B_P$ of $P$ as a Boolean polynomial:
\begin{eqnarray*}
B_P&=&(1+x)y(1+z)+(1+x)yz+x(1+y)z+xy(1+z)+xyz\\
&=&5xyz+2xy+2yz+xz+y\\
&=&xyz+xz+y.
\end{eqnarray*}
One can check that the rightmost side $xyz+xz+y$ coincides with $P$ as a function on $\mathbf{F}_2^3$.

\begin{rem}
In general, any Boolean polynomial $f(x_1,\cdots,x_n)$ should be regarded as an object in $\mathbf{F}_2[x_1,\cdots,x_n]/I$, where $I$ denotes the ideal
\begin{eqnarray*}
I=(x_1^2-x_1,\cdots,x_n^2-x_n).
\end{eqnarray*}
In the example above, however, one does not need to reduce $B_P$ modulo the ideal $I$, since it has no squared variables.
\end{rem}

\subsection{Rhythm polynomials}
In this subsection we investigate the problem how the self map $Bav$ on $\mathbf{B}_N^n$ can be expressed as a polynomial map. Since $\mathbf{B}_N=\bigcup_{n=0}^N \mathbf{B}_N^n$, we can extend the domain of the definition of the map $Bav$ to the whole $\mathbf{B}_N$ naturally. By abuse of language we also denote the extended map by $Bav$. Furthermore the composite 
\begin{eqnarray*}
Bav_N^i=pr_i\circ Bav:\mathbf{F}_2^N\rightarrow\mathbf{F}_2
\end{eqnarray*}
is called the $i$-th {\it rhythm polynomial modulo $N$}. By Theorem 3.1, (2), every rhythm polynomial is obtained from the 0-th rhythm polynomial $Bav_N^0$ through a cyclic change of variables $v_i$, $i\in\mathbf{Z}_N$. Therefore we can focus our attention exclusively on $Bav_N^0$.\\
First we examine the problem for the case $N=3$. The following table displays the Boolean averages of vectors in $\mathbf{B}_3$. Recall that $Bav=ItoB\circ Iav\circ BtoI$ by (2.27):
\begin{eqnarray*}
\begin{array}{ccccccc}
\mathbf{v} & \vline & BtoI(\mathbf{v}) & \vline & Iav(BtoI(\mathbf{v})) & \vline & Bav(\mathbf{v}) \\
\hline
(0,0,0) & \vline & \phi & \vline & \phi & \vline & (0,0,0) \\
(0,0,1) & \vline & (2) & \vline & (2) & \vline & (0,0,1) \\
(0,1,0) & \vline & (1) & \vline & (1) & \vline & (0,1,0) \\
(0,1,1) & \vline & (1,2) & \vline & (0,1) & \vline & (1,1,0) \\
(1,0,0) & \vline & (0) & \vline & (0) & \vline & (1,0,0) \\
(1,0,1) & \vline & (0,2) & \vline & (1,2) & \vline & (0,1,1) \\
(1,1,0) & \vline & (0,1) & \vline & (0,2) & \vline & (1,0,1) \\
(1,1,1) & \vline & (0,1,2) & \vline & (0,1,2) & \vline & (1,1,1) \\
\end{array}
\end{eqnarray*}
\begin{center}
Table 4.2. Boolean averages of three-dimensional vectors
\end{center}
\noindent
Employing the algorithm explained in the previous subsection, we have
\begin{eqnarray*}
Bav_3^0(v_0,v_1,v_2)&=&\overline{v_0}v_1v_2+v_0\overline{v_1}\overline{v_2}+v_0v_1\overline{v_2}+v_0v_1v_2\\
&=&v_0+v_0v_2+v_1v_2,\\
Bav_3^1(v_0,v_1,v_2)&=&\overline{v_0}v_1\overline{v_2}+\overline{v_0}v_1v_2+v_0\overline{v_1}v_2+v_0v_1v_2\\
&=&v_1+v_0v_1+v_0v_2,\\
Bav_3^2(v_0,v_1,v_2)&=&\overline{v_0}\overline{v_1}v_2+v_0\overline{v_1}v_2+v_0v_1\overline{v_2}+v_0v_1v_2\\
&=&v_2+v_0v_1+v_1v_2.
\end{eqnarray*}
This computation confirms the cyclicity stated in Theorem 3.1, (2).\\
In a similar way we can compute the rhythm polynomial $Bav_N^0$ for small $N$. We list the results as follows:
\begin{eqnarray*}
\begin{array}{cl}
N & Bav_N^0 \\
\hline
3 & v_0+v_0v_2+v_1v_2 \\
4 & v_0+v_0v_2+v_0v_3+v_1v_3+v_2v_3+v_0v_1v_2+v_1v_2v_3 \\
5 & v_0+v_0v_2+v_0v_3+v_0v_4+v_1v_4+v_2v_3+v_2v_4+v_0v_1v_2+v_0v_1v_3\\
& +v_0v_3v_4+v_1v_2v_3+v_1v_2v_4+v_2v_3v_4+v_0v_1v_3v_4+v_1v_2v_3v_4 \\
\end{array}
\end{eqnarray*}
\begin{center}
Table 4.3. $Bav_N^0$ for $N=3,4,5$ in variables $v_i$
\end{center}
\noindent
We observe here that for each $N\in\{3,4,5\}$ the number of terms in $Bav_N^0$ is equal to $2^{N-1}-1$. If we change, however, the variables $v_i$ to $w_i+1$ for $i\in[0,N-1]$, the number of terms diminishes to $2(N-1)$, as is observed in the following list:
\begin{eqnarray*}
\begin{array}{cl}
N & Bav_N^0 \\
\hline
3 & 1+w_1+w_0w_2+w_1w_2 \\
4 & 1+w_1+w_0w_1+w_0w_3+w_0w_1w_2+w_1w_2w_3 \\
5 & 1+w_1+w_0w_1+w_0w_4+w_0w_1w_2+w_0w_1w_4\\
&+w_0w_1w_3w_4+w_1w_2w_3w_4 \\
6 & 1+w_1+w_0w_1+w_0w_5+w_0w_1w_2+w_0w_1w_5\\
&+w_0w_1w_2w_5+w_0w_1w_4w_5+w_0w_1w_2w_3w_5+w_1w_2w_3w_4w_5 \\
\end{array}
\end{eqnarray*}
\begin{center}
Table 4.4. $Bav_N^0$ for $N=3,4,5, 6$ in variables $w_i$
\end{center}

\noindent
At this point, we can imagine neither the general shape of $Bav_N^0$, nor a possible relation between $Bav_N^0$ and $Bav_{N+1}^0$. We will show that, if we change the index set $\mathbf{Z}_N$ to $\mathbf{Z}_{N,\pm}$, the set of the least absolute remainders modulo $N$, as a new index set, then we can detect the structure of $Bav_N^0$ more easily. When $N$ is even, we choose $\frac{N}{2}$ from the two possible representatives $\{-\frac{N}{2},\frac{N}{2}\}$ as the remainder, and hence the set $\mathbf{Z}_{N,\pm}$ is specified as
\begin{eqnarray*}
\mathbf{Z}_{N,\pm}=
\left\{
\begin{array}{ll}
\{-m,-m+1,\cdots,-1,0,1,\cdots, m-1, m\}, & \mbox{ if $N=2m+1$}, \\
\{-m+1,\cdots,-1,0,1,\cdots, m-1, m\}, & \mbox{ if $N=2m$}. \\
\end{array}
\right.
\end{eqnarray*}
By employing the floor symbol, we can express the set $\mathbf{Z}_{N,\pm}$ regardless of the parity of $N$ as follows:
\begin{eqnarray}
\mathbf{Z}_{N,\pm}=[-\left\lfloor\frac{N-1}{2}\right\rfloor,\left\lfloor\frac{N}{2}\right\rfloor].
\end{eqnarray}
For later use, we divide the set $\mathbf{Z}_{N,\pm}$ into the following three parts:
\begin{eqnarray*}
\mathbf{Z}_{N,\pm}=\mathbf{Z}_{N,-}\sqcup\{0\}\sqcup\mathbf{Z}_{N,+},
\end{eqnarray*}
where
\begin{eqnarray*}
\mathbf{Z}_{N,-}&=&\{k\in\mathbf{Z}_{N,\pm};k<0\},\\
\mathbf{Z}_{N,+}&=&\{k\in\mathbf{Z}_{N,\pm};k>0\},
\end{eqnarray*}
and put
\begin{eqnarray*}
\mathbf{Z}_{N,\leq 0}&=&\mathbf{Z}_{N,-}\cup\{0\}.
\end{eqnarray*}

We construct a bridge between $\mathbf{Z}_N$ and $\mathbf{Z}_{N,\pm}$ employing a simple bijection defined as follows:

\begin{df}
Let $rem_{\pm}:\mathbf{Z}\rightarrow \mathbf{Z}_{N,\pm}$ denote the map whose value at $k\in\mathbf{Z}$ is the least absolute remainder of $k$ modulo $N$. Let $\varphi_N:\mathbf{Z}_N\rightarrow\mathbf{Z}_{N,\pm}$ denote the composite $rem_{\pm}\circ \iota$, where $\iota:\mathbf{Z}_N\rightarrow \mathbf{Z}$ is the natural inclusion map, namely, $\varphi_N$ is a bijection between $\mathbf{Z}_N$ and $\mathbf{Z}_{N,\pm}$ defined by
\begin{eqnarray*}
\varphi_N(k)=
\left\{
\begin{array}{ll}
k, & \mbox{ if }0\leq k\leq\left\lfloor\frac{N}{2}\right\rfloor, \\
k-N, & \mbox{ if }\left\lfloor\frac{N}{2}\right\rfloor<k\leq N-1. \\
\end{array}
\right.
\end{eqnarray*}
\end{df}

\begin{rem}
The map $\varphi_N$ satisfies the condition 
\begin{eqnarray}
\varphi_N(a)\equiv a\pmod N
\end{eqnarray}
for any $a\in\mathbf{Z}_N$. Therefore $\varphi_N$ exchanges the two representatives in $\mathbf{Z}_N$ and in $\mathbf{Z}_{N,\pm}$ of a class in the quotient $\mathbf{Z}/N\mathbf{Z}$.
\end{rem}

\noindent
We introduce new variables $y_j$, $j\in\mathbf{Z}_{N,\pm}$, by the rule
\begin{eqnarray*}
w_i=y_{\phi_N(i)}
\end{eqnarray*}
for any $i\in\mathbf{Z}_N$. Then Table 4.4 transforms to the following:
\begin{eqnarray*}
\begin{array}{cl}
N & Bav_N^0 \\
\hline
3 & 1+y_1+y_0y_{-1}+y_1y_{-1} \\
4 & 1+y_1+y_0y_1+y_0y_{-1}+y_0y_1y_2+y_1y_2y_{-1} \\
5 & 1+y_1+y_0y_1+y_0y_{-1}+y_0y_1y_2+y_0y_1y_{-1}\\
&+y_0y_1y_{-2}y_{-1}+y_1y_2y_{-2}y_{-1} \\
6 & 1+y_1+y_0y_1+y_0y_{-1}+y_0y_1y_2+y_0y_1y_{-1}\\
&+y_0y_1y_2y_{-1}+y_0y_1y_{-1}y_{-1} \\
& +y_0y_1y_2y_3y_{-1}+y_1y_2y_3y_{-2}y_{-1} \\
\end{array}
\end{eqnarray*}
\begin{center}
Table 4.5. $Bav_N^0$ for $N=3,4,5,6$ in variables $y_i$, $i\in\mathbf{Z}_{N,\pm}$
\end{center}

\noindent
This table suggests us the structure of the rhythm polynomial $Bav_N^0$ more clearly than Table 4.4. We explore in the next section the true nature of these polynomials.

\section{Determination of rhythm polynomials}
In this section we determine the Boolean polynomial $Bav_N^0$ completely for any $N$, and derive a recurrence formula which connects $Bav_N^0$ and $Bav_{N-1}^0$. Furthermore we prove that $Bav_N^0$ is a {\it balanced} polynomial. (See Definition 5.10.)

\subsection{Change of the set of indices}
As is motivated in Subsection 4.2, we need to use two index sets $\mathbf{Z}_N$ and $\mathbf{Z}_{N,\pm}$ properly to understand the rhythm polynomials. Fortunately enough, even if we employ $\mathbf{Z}_{N,\pm}$ as the index set, several corresponding objects can be defined in almost the same way.\\
By abuse of language, we denote also by $\varphi_N$ the bijective map $\mathbf{Z}_N^n\rightarrow\mathbf{Z}_{N,\pm}^n$ induced by $\varphi_N$ between their self-products, and put 
\begin{eqnarray*}
\mathbf{R}_{N,\pm}^n=\varphi_N(\mathbf{R}_N^n).
\end{eqnarray*}

\begin{df}
Let $\mathbf{I}_{N,\pm}^n$ denote the subset of $\mathbf{R}_{N,\pm}^n$ which consists of the monotone increasing sequence:
\begin{eqnarray*}
\mathbf{I}_{N,\pm}^n=\{\mathbf{a}=(a_0,\cdots,a_{n-1})\in\mathbf{R}_{N,\pm}^n;a_0<a_1<\cdots <a_{n-1}\}.
\end{eqnarray*}
\end{df}

\begin{df}
Let $\mathbf{B}_{N,\pm}$ denote the set of $n$-dimensional Boolean vectors with index set $\mathbf{Z}_{N,\pm}$, namely,
\begin{eqnarray*}
\mathbf{B}_{N,\pm}=\{(v_{\ell},\cdots,v_g);v_i\in\mathbf{F}_2 \mbox{ for any }i\in[\ell,g]\},
\end{eqnarray*}
where we introduce the two symbols $\ell, g$ by
\begin{eqnarray*}
\ell=-\left\lfloor\frac{N-1}{2}\right\rfloor,\hspace{2mm}g=\left\lfloor\frac{N}{2}\right\rfloor,
\end{eqnarray*}
which are the \underline{l}east and the \underline{g}reatest elements in $\mathbf{Z}_{N,\pm}$.
\end{df}

\noindent
We use the same symbol $ItoB$ and $BtoI$ as before to denote the pair of bijections between $\mathbf{I}_{N,\pm}^n$ and $\mathbf{B}_{N,\pm}^n$, which are defined as follows:

\begin{df}
$(1)$ For any $\mathbf{a}\in\mathbf{I}_{N,\pm}^n$, the Boolean vector $\mathbf{v}=(v_{\ell},\cdots,v_g)=ItoB(\mathbf{a})\in\mathbf{B}_{N,\pm}$ is defined by
\begin{eqnarray}
v_i=
\left\{
\begin{array}{ll}
1, & \mbox{ if }i\in\mathbf{a},\\
0, & \mbox{ if }i\not\in\mathbf{a}.\\
\end{array}
\right.
\end{eqnarray}
$(2)$ For any $\mathbf{v}\in\mathbf{B}_{N,\pm}^n$, the $n$-tuple $BtoI(\mathbf{v})\in\mathbf{I}_{N,\pm}^n$ is defined to be the unique arrangement of the set $\{i\in\mathbf{Z}_{N,\pm};v_i=1\}$ in increasing order.
\end{df}

\subsection{Parental pair and ancestor of zero in $\mathbf{Z}_N$}
It is important to keep in mind the following fact:

\begin{lem}
The $0$-th rhythm polynomial $Bav_N^0$ takes value $1$ at $\mathbf{v}\in\mathbf{B}_N$ if and only if 
\begin{eqnarray*}
0\in Rav(BtoI(\mathbf{v})).
\end{eqnarray*}
\end{lem}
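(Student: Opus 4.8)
The plan is to unwind the chain of definitions connecting the Boolean polynomial $Bav_N^0$ back to the discrete average $Rav$ on rhythms, and to track where the index $0$ lives at each stage. Recall that $Bav_N^0 = pr_0\circ Bav$, and that by Lemma 2.1 we have $Bav = ItoB\circ Iav\circ BtoI$. So for a vector $\mathbf{v}\in\mathbf{B}_N$, evaluating $Bav_N^0(\mathbf{v})$ amounts to computing $ItoB(Iav(BtoI(\mathbf{v})))$ and reading off its $0$-th coordinate. By the definition of $ItoB$ (Definition 2.7 via Definition 2.8), the $0$-th coordinate of $ItoB(\mathbf{b})$ is $1$ precisely when $0$ is a member of the rhythm $\mathbf{b}$. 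Hence $Bav_N^0(\mathbf{v})=1$ if and only if $0\in Iav(BtoI(\mathbf{v}))$.

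The remaining task is therefore to replace the condition $0\in Iav(BtoI(\mathbf{v}))$ by the stated condition $0\in Rav(BtoI(\mathbf{v}))$. Here I would invoke Definition 2.10, which sets $Iav(\mathbf{a})=rot^{p(\mathbf{a})}(Rav(\mathbf{a}))$ for $\mathbf{a}\in\mathbf{I}_N^n$ (writing $\mathbf{a}=BtoI(\mathbf{v})$, which indeed lies in $\mathbf{I}_N^n$). The key observation is that $rot$ merely permutes the entries of a rhythm cyclically and does \emph{not} change its underlying set of values; this is exactly the content of equation (2.10), $RtoB\circ rot = RtoB$, which expresses that the membership set is a $rot$-invariant. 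Consequently, for any rhythm $\mathbf{b}$ and any power $k$, we have $0\in rot^k(\mathbf{b})$ if and only if $0\in\mathbf{b}$. Applying this with $\mathbf{b}=Rav(BtoI(\mathbf{v}))$ and $k=p(BtoI(\mathbf{v}))$ yields
\begin{eqnarray*}
0\in Iav(BtoI(\mathbf{v})) \Longleftrightarrow 0\in Rav(BtoI(\mathbf{v})),
\end{eqnarray*}
which is precisely what is needed.

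I expect the main (and only) subtlety to be the bookkeeping at the first stage: one must be careful that ``the $0$-th coordinate of $ItoB(\mathbf{b})$ equals $1$'' translates exactly into ``$0\in\mathbf{b}$'' as an element of $\mathbf{Z}_N$, and that $BtoI(\mathbf{v})$ genuinely lands in the domain $\mathbf{I}_N^n$ on which $Iav$ is defined, so that Definition 2.10 applies. Both are immediate from the definitions, but they are where an error could creep in. The $rot$-invariance of the value set, being the crux, is already packaged cleanly as equation (2.10), so once the first translation is in place the equivalence follows with no real computation.
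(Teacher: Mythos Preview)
Your proof is correct and follows essentially the same route as the paper: unwind $Bav_N^0=pr_0\circ Bav$ via $Bav=ItoB\circ Iav\circ BtoI$, translate the $0$-th coordinate condition into $0\in Iav(BtoI(\mathbf{v}))$, and then pass from $Iav$ to $Rav$. If anything, your handling of that last step is more careful than the paper's: the paper simply asserts $Iav(\mathbf{a})=Rav(\mathbf{a})$ ``since $\mathbf{a}\in\mathbf{I}_N^n$,'' which as an equality of tuples is only true in the proper case, whereas you correctly argue via $Iav(\mathbf{a})=rot^{p(\mathbf{a})}(Rav(\mathbf{a}))$ and the $rot$-invariance of the underlying value set (equation~(2.9)/(2.10)), which covers both the proper and improper cases.
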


\begin{proof}
Let $\mathbf{a}=(a_0,\cdots,a_{n-1})=BtoI(\mathbf{v})\in\mathbf{I}_N^n$. Since it is in $\mathbf{I}_N^n$, we have
\begin{eqnarray*}
Iav(\mathbf{a})=Rav(\mathbf{a}).
\end{eqnarray*}
Recall that the two maps $Iav$ and $Bav$ are compatible through the map $ItoB$ by Proposition 2.9. Hence we have the following series of equivalences:
\begin{eqnarray*}
0\in Rav(\mathbf{a})&\Leftrightarrow& 0\in Iav(\mathbf{a})\\
&\Leftrightarrow&0\in (BtoI\circ Bav\circ ItoB)(\mathbf{a})\hspace{10mm}(\mbox{by }(2.27))\\
&\Leftrightarrow& 0\in(BtoI\circ Bav)(\mathbf{v})\\
&\Leftrightarrow& 0\in supp(Bav(\mathbf{v}))\hspace{25mm}(\mbox{by }(2.12))\\
&\Leftrightarrow& pr_0(Bav(\mathbf{v}))=1\\
&\Leftrightarrow& Bav_N^0(\mathbf{v})=1.
\end{eqnarray*}
Thus we finish the proof.
\end{proof}

\noindent
By this lemma, in order to find the form of $Bav_N^0$, it is essential to determine the set of pairs $(a,b)\in\mathbf{Z}_N\times\mathbf{Z}_N$ such that $av_{\mathbf{Z}_N}(a,b)=0$. This leads us to the following:

\begin{df}
A pair $(a,b)\in\mathbf{Z}_N\times\mathbf{Z}_N$ is said to be a {\rm parental pair of zero} if $av_{\mathbf{Z}_N}(a,b)=0$. A rhythm $\mathbf{a}\in\mathbf{R}_N$ is called an ancestor of zero if $0\in Rav(\mathbf{a})$.
\end{df}

The following proposition provides us with a necessary and sufficient condition for a pair to be a parental pair of zero. It is expressed as a congruence modulo $N$. This will facilitate our transition from $\mathbf{Z}_N$ to $\mathbf{Z}_{N,\pm}$ in the next subsection.

\begin{prp}
Let $a,b$ be an arbitrary pair of elements in $\mathbf{Z}_N$. \\
$(0)$ When $a=0$, the following conditions are equivalent: \\
$(0.{\rm A}) \hspace{1mm}av_{\mathbf{Z}_N}(a,b)=0$, \\
$(0.{\rm B}) \hspace{1mm}b=0,1$. \\
$(1)$ When $a=1$, these exists no $b\in\mathbf{Z}_N$ such that $av_{\mathbf{Z}_N}(a,b)=0$.\\
$(2)$ When $a\geq 2$, the following conditions are equivalent: \\
$(2.{\rm A}) \hspace{1mm}av_{\mathbf{Z}_N}(a,b)=0$, \\
$(2.{\rm B}) \hspace{1mm} a>b\mbox{ and }a+b\equiv 0,1\pmod N$.
\end{prp}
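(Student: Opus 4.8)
The plan is to reduce everything to the explicit formula for $av_{\mathbf{Z}_N}$ furnished by Lemma 1.1, and then, in the main case, to a bounding argument showing that the only multiple of $N$ that the relevant floor can attain is $N$ itself. For the two degenerate cases I would argue directly. In case $(0)$, since $a=0\le b$ for every $b\in\mathbf{Z}_N$, Lemma 1.1 gives $av_{\mathbf{Z}_N}(0,b)=\lfloor b/2\rfloor$, which vanishes precisely when $b\in\{0,1\}$. In case $(1)$ I would split on the position of $b$: if $b\ge 1$ then $a=1\le b$ and $av_{\mathbf{Z}_N}(1,b)=\lfloor (1+b)/2\rfloor\ge 1$, whereas if $b=0$ then $a>b$ and $av_{\mathbf{Z}_N}(1,0)=\langle\lfloor (N+1)/2\rfloor\rangle_N$; since $N\ge 3$ one checks that $0<\lfloor (N+1)/2\rfloor<N$, so the value is nonzero. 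Hence no $b$ yields $0$.

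The substantive case is $(2)$. First I would establish that $(2.\mathrm{A})$ forces $a>b$, by contraposition: if $a\le b$, then with $a\ge 2$ we have $a+b\ge 4$, so $av_{\mathbf{Z}_N}(a,b)=\lfloor (a+b)/2\rfloor\ge 2>0$. Thus $av_{\mathbf{Z}_N}(a,b)=0$ already implies $a>b$, and under $a>b$ Lemma 1.1 gives $av_{\mathbf{Z}_N}(a,b)=\langle\lfloor (a+b+N)/2\rfloor\rangle_N$, so $(2.\mathrm{A})$ is equivalent to $\lfloor (a+b+N)/2\rfloor\equiv 0\pmod N$.

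The key step is then to bound the argument of the floor. From $a\ge 2$, $b\ge 0$, $a>b$ and $a\le N-1$ one obtains $2\le a+b\le 2N-3$, whence $0<\lfloor (a+b+N)/2\rfloor<2N$. Consequently the congruence holds if and only if this floor equals exactly $N$, and $N\le (a+b+N)/2<N+1$ rearranges to $a+b\in\{N,N+1\}$. Finally I would reconcile $a+b\in\{N,N+1\}$ with the stated condition $a+b\equiv 0,1\pmod N$: using the same range $2\le a+b\le 2N-3$, the only element of $[2,2N-3]$ congruent to $0$ modulo $N$ is $N$, and the only one congruent to $1$ is $N+1$ (which is simply unattainable when $N$ is so small that $N+1>2N-3$, as for $N=3$). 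Hence $(2.\mathrm{A})\Leftrightarrow(2.\mathrm{B})$.

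I expect the main obstacle to be precisely this bounding argument in case $(2)$: one must track the bounds $2\le a+b\le 2N-3$ carefully, since they use all of $a\ge 2$, $a>b$, and $a\le N-1$, in order to guarantee that $\lfloor (a+b+N)/2\rfloor$ lies strictly between $0$ and $2N$ and hence can equal no multiple of $N$ other than $N$. The parity of $N$ and the small-$N$ edge cases should be checked explicitly to confirm they do not spoil the strict inequalities.
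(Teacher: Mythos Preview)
Your proof is correct, but it proceeds along a different line from the paper's own argument. The paper works directly from the original definition $av_{\mathbf{Z}_N}(a,b)=a+_N\lfloor(b-_Na)/2\rfloor$ together with the geometric fact $av_{\mathbf{Z}_N}(a,b)\in[a,b]_{\mathbf{Z}_N}$ (Proposition~1.2). In case~(1) it uses the latter to force $b=0$ and then computes $av_{\mathbf{Z}_N}(1,0)$ by parity of $N$; in case~(2) it handles the two implications separately, first deducing $\lfloor(b-_Na)/2\rfloor=N-a$ and splitting on the parity of $b-_Na$ to obtain the congruence, and then, for the converse, plugging $b=N-a$ or $b=N+1-a$ back into the definition. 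By contrast, you push everything through the closed form of Lemma~1.1, $av_{\mathbf{Z}_N}(a,b)=\langle\lfloor(a+b+N)/2\rfloor\rangle_N$ for $a>b$, and replace the parity case analysis by a single bounding argument showing $0<\lfloor(a+b+N)/2\rfloor<2N$, which pins down the unique multiple of $N$ and yields both directions of~(2) at once. Your route is more uniform and avoids the separate forward/backward computations; the paper's route makes the origin of the two residues $0,1$ modulo $N$ more visibly tied to the parity of $b-_Na$.
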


\begin{rem}
When $n\geq 2$, an element in $\mathbf{R}_N^n$ consists of $n$ \underline{distinct} elements of $\mathbf{Z}_N$ by the definition. Hence the first alternative in $(0.{\rm B})$, namely the case when $(a,b)=(0,0)$ occurs only if $n=1$. In that case, the discrete average $Rav((0))$ of the one-element rhythm $(0)$ is $(0)$.
\end{rem}

\begin{proof}
(0) When $a=0$, we have
\begin{eqnarray*}
av_{\mathbf{Z}_N}(0,b)=\left\lfloor\frac{b}{2}\right\rfloor,
\end{eqnarray*}
and hence it is equal to $0$ if and only if $b=0,1$. Therefore (0.A) and (0.B) are equivalent. \\
(1) When $a=1$, if $av_{\mathbf{Z}_N}(1,b)=0$, then $0\in [1,b]_N$ by Proposition 1.2, (2). This occurs only if $b=0$. When $N$ is odd and $N=2m+1$, we have
\begin{eqnarray*}
av_{\mathbf{Z}_N}(1,0)&=&1+_N\left\lfloor\frac{0-_N1}{2}\right\rfloor\\
&=&1+_N\left\lfloor\frac{N-1}{2}\right\rfloor\\
&=&1+_Nm.
\end{eqnarray*}
Note that $1+_Nm=1+m<1+2m=N$, and hence $1+_Nm$ cannot be equal to 0. When $N$ is even and $N=2m$, it follows by a similar computation that $av_{\mathbf{Z}_N}(1,0)=1+_N(m-1)$, which cannot be zero. This completes the proof of the assertion (1).\\
(2) (2.A)$\Rightarrow$(2.B): Since $av_{\mathbf{Z}_N}(a,b)\in[a,b]_N$ by Proposition 1.2, (2), the condition $av_{\mathbf{Z}_N}(a,b)=0$ implies that $0\in [a,b]$, and hence $a>b$. Furthermore the equality $av_{\mathbf{Z}_N}(a,b)=a+_N\left\lfloor\frac{b-_Na}{2}\right\rfloor=0$ holds if and only if
\begin{eqnarray}
\left\lfloor\frac{b-_Na}{2}\right\rfloor=N-a,
\end{eqnarray}
since $a\neq 0$. Note that
\begin{eqnarray*}
2\left\lfloor\frac{x}{2}\right\rfloor =
\left\{
\begin{array}{ll}
x, & \mbox{ if }x\equiv 0\pmod{2}, \\
x-1, & \mbox{ if }x\equiv 1\pmod{2}, \\
\end{array}
\right.
\end{eqnarray*}
for any $x\in\mathbf{Z}$. When $b-_Na\equiv 0\pmod{2}$, doubling the both sides of (5.2), we have
\begin{eqnarray*}
b-_Na=2N-2a,
\end{eqnarray*}
which gives us the congruence
\begin{eqnarray*}
a+b\equiv 0\pmod N.
\end{eqnarray*}
When $b-_Na\equiv 1\pmod{2}$, the equality (5.2) implies similarly that
\begin{eqnarray*}
b-_Na-1=2N-2a,
\end{eqnarray*}
which gives us the congruence
\begin{eqnarray*}
a+b\equiv 1\pmod N.
\end{eqnarray*}
Thus (2.A) implies (2.B).\\
(2.B)$\Rightarrow$(2.A): In case $a+b\equiv 0\pmod N$, we have $b=N-a$ since $a>0$. Furthermore, since $a>b$, we have $a>N-a$, therefore $0>N-2a>-N$. It follows that
\begin{eqnarray*}
b-_Na&=&(N-a)-_Na\\
&=&\langle (N-a)-a\rangle_N\\
&=&\langle N-2a\rangle_N\\
&=&2N-2a.
\end{eqnarray*}
Hence we have
\begin{eqnarray*}
av_{\mathbf{Z}_N}(a,b)&=&a+_N\left\lfloor\frac{b-_Na}{2}\right\rfloor\\
&=&a+_N\left\lfloor\frac{2N-2a}{2}\right\rfloor\\
&=&a+_N(N-a)\\
&=&0.
\end{eqnarray*}
In case $a+b\equiv 1\pmod N$, we have $b=N+1-a$, since we have assumed that $a\geq 2$. Since $a>b$, we have $a>N+1-a$, therefore $0>N+1-2a>-N$. It follows that
\begin{eqnarray*}
b-_Na&=&(N+1-a)-_Na\\
&=&\langle (N+1-a)-a\rangle_N\\
&=&\langle N+1-2a\rangle_N\\
&=&2N+1-2a.
\end{eqnarray*}
Hence we have
\begin{eqnarray*}
av_{\mathbf{Z}_N}(a,b)&=&a+_N\left\lfloor\frac{b-_Na}{2}\right\rfloor\\
&=&a+_N\left\lfloor\frac{2N+1-2a}{2}\right\rfloor\\
&=&a+_N(N-a)\\
&=&0.
\end{eqnarray*}
This completes the proof.
\end{proof}

\begin{cor}
When $a\in [1,N/2]$, there exists no $b\in\mathbf{Z}_N$ such that $av_{\mathbf{Z}_N}(a,b)$\\$=0$.
\end{cor}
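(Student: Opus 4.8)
The plan is to derive this corollary directly from Proposition 5.1, which already classifies all parental pairs of zero; the only work left is a short range estimate on $a+b$. First I would dispose of the boundary value $a=1$: by Proposition 5.1(1) there is simply no $b\in\mathbf{Z}_N$ with $av_{\mathbf{Z}_N}(1,b)=0$, so that case needs nothing further.

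For the remaining range $2\le a\le\lfloor N/2\rfloor$, I would invoke the equivalence $(2.\mathrm{A})\Leftrightarrow(2.\mathrm{B})$ of Proposition 5.1(2): a pair $(a,b)$ satisfies $av_{\mathbf{Z}_N}(a,b)=0$ if and only if $a>b$ and $a+b\equiv 0,1\pmod N$. The idea is to show that the congruence condition can never be met once $a$ is confined to the lower half of $\mathbf{Z}_N$. Assuming $a>b\ge 0$, I would bound $a+b$ from both sides: since $b\ge 0$ and $a\ge 2$ we get $a+b\ge 2$, while $b\le a-1$ together with $2a\le N$ (which is exactly the hypothesis $a\le N/2$) gives $a+b\le 2a-1\le N-1$. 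Thus $a+b$ lies in the integer range $[2,N-1]$.

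The conclusion then follows because this range contains no integer congruent to $0$ or to $1$ modulo $N$: the nearest multiples of $N$ are $0$ and $N$, and the nearest integers $\equiv 1$ are $1$ and $N+1$, all four of which fall outside $[2,N-1]$. Hence condition $(2.\mathrm{B})$ fails for every admissible $b$, so $av_{\mathbf{Z}_N}(a,b)\ne 0$, which completes the argument.

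I expect no genuine obstacle here; the proof is essentially a one-line estimate riding on Proposition 5.1. The only point requiring a moment of care is the even endpoint $a=N/2$, where $2a=N$ and the upper bound $a+b\le N-1$ is tight; there one should confirm that the candidates $b=N-a$ and $b=N+1-a$ forced by $a+b\equiv 0,1$ are excluded precisely by the strict inequality $a>b$. Since the range estimate $a+b\in[2,N-1]$ already incorporates this strictness, the endpoint is handled uniformly with the rest, and the verification reduces to bookkeeping on the inequalities.
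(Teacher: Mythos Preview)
Your proof is correct and follows essentially the same approach as the paper: both dispose of $a=1$ via Proposition~5.1(1), and for $2\le a\le N/2$ both use Proposition~5.1(2) together with the range constraint on $a+b$ coming from $a>b$ and $2a\le N$. The paper phrases the last step as a contradiction (solving for $b=N-a$ or $N+1-a$ and deriving $a>N/2$), whereas you give the equivalent direct bound $a+b\in[2,N-1]$; the underlying arithmetic is identical.
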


\begin{proof}
By Proposition 5.1, (1), we may assume that $2\leq a\leq N/2$. Suppose that $av_{\mathbf{Z}_N}(a,b)=0$ holds for some $b\in\mathbf{Z}_N$. Then it follows from Proposition 5.1, (2) that $a+b\equiv 0,1\pmod N$, which implies $b=N-a$ or $b=N+1-a$. Since $a>b$ by (2.B), we have $2a>N$ or $2a>N+1$, and hence $a>N/2$ in both cases. This contradiction finishes the proof.
\end{proof}

\subsection{Parental pair and ancestor of zero in $\mathbf{Z}_{N,\pm}$}
In this subsection, we show that, if we employ the index set $\mathbf{Z}_{N,\pm}$ instead of $\mathbf{Z}_N$, then the assertions of Proposition 5.1 and Corollary 5.1 are simplified considerably.

\begin{df}
For any $(a,b)\in\mathbf{Z}_{N,\pm}\times\mathbf{Z}_{N,\pm}$, $\mathbf{Z}_{N,\pm}$-{\rm average} $av_{\mathbf{Z}_N,\pm}(a,b)$ is defined by
\begin{eqnarray*}
av_{\mathbf{Z}_N,\pm}(a,b)=\varphi_N(av_{\mathbf{Z}_N}(\varphi_N^{-1}(a),\varphi_N^{-1}(b))).
\end{eqnarray*}
Furthermore, on $\mathbf{R}_{N,\pm}^n=\varphi_N(\mathbf{R}_N^n)$, we define {\rm $\pm$-discrete average transformation} $Rav_{\pm}:\mathbf{R}_{N,\pm}^n\rightarrow\mathbf{R}_{N,\pm}^n$ by
\begin{eqnarray*}
Rav_{\pm}(\mathbf{a})=\varphi_N(Rav(\varphi_N^{-1}(\mathbf{a})))
\end{eqnarray*}
for any $\mathbf{a}\in\mathbf{R}_{N,\pm}^n$.
\end{df}

\noindent
The two notions, parental pairs and ancestors of zero, introduced in the previous subsection, are translated as follows:

\begin{df}
A pair $(a,b)\in\mathbf{Z}_{N,\pm}\times\mathbf{Z}_{N,\pm}$ is said to be a {\rm parental pair of zero} if $rav_{\pm}(a,b)=0$. We denote the set of parental pairs of zero in $\mathbf{Z}_{N,\pm}\times\mathbf{Z}_{N,\pm}$ by $Par_N$:
\begin{eqnarray*}
Par_N=\{(a,b)\in\mathbf{Z}_{N,\pm}\times\mathbf{Z}_{N,\pm};rav_{\pm}(a,b)=0\}.
\end{eqnarray*}
A rhythm $\mathbf{a}\in\mathbf{R}_{N,\pm}$ is called an ancestor of zero if $0\in Rav_{\pm}(\mathbf{a})$.
\end{df}

We can determine the set $Par_N$ completely in the following way:

\begin{prp}
$(0)$ If $(a,b)\in Par_N$, then $a\in\mathbf{Z}_{N,\leq 0}=\mathbf{Z}_{N,-}\cup\{0\}$. \\
$(1)$ We have
\begin{eqnarray*}
Par_N=Par_N^0\sqcup Par_N^1,
\end{eqnarray*}
where
\begin{eqnarray*}
Par_N^0&=&\{(-k,k);0\leq k\leq\lfloor\frac{N-1}{2}\rfloor\}\subset\mathbf{Z}_{N,\pm}\times\mathbf{Z}_{N,\pm}, \\
Par_N^1&=&\{(-k,k+1);0\leq k\leq\lfloor\frac{N-2}{2}\rfloor\}\subset\mathbf{Z}_{N,\pm}\times\mathbf{Z}_{N,\pm}.
\end{eqnarray*}
In particular there are $N$ parental pairs of zero for any $N$.
\end{prp}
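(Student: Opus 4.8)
The plan is to transport the whole problem back to $\mathbf{Z}_N$, where Proposition 5.1 and Corollary 5.1 already give a complete description of the parental pairs of zero, and then to read off the $\mathbf{Z}_{N,\pm}$-picture through $\varphi_N$. The starting observation is that, since $\varphi_N$ is a bijection with $\varphi_N(0)=0$, the defining relation $av_{\mathbf{Z}_N,\pm}(a,b)=\varphi_N(av_{\mathbf{Z}_N}(\varphi_N^{-1}(a),\varphi_N^{-1}(b)))$ of Definition 5.4 yields
\[
(a,b)\in Par_N\iff av_{\mathbf{Z}_N}(\varphi_N^{-1}(a),\varphi_N^{-1}(b))=0,
\]
that is, $(a,b)$ is a $\mathbf{Z}_{N,\pm}$-parental pair of zero exactly when $(a',b'):=(\varphi_N^{-1}(a),\varphi_N^{-1}(b))$ is a parental pair of zero in $\mathbf{Z}_N$. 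Throughout I will use the explicit inverse $\varphi_N^{-1}(c)=c$ for $c\geq 0$ and $\varphi_N^{-1}(c)=c+N$ for $c<0$.

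For part $(0)$ I apply Proposition 5.1 to $(a',b')$. The case $a'=1$ is excluded by Proposition 5.1, $(1)$, and the case $a'=0$ gives $a=\varphi_N(0)=0$. In the remaining case $a'\geq 2$, Corollary 5.1 forces $a'>N/2$; a short check of floors (splitting on the parity of $N$) shows $a'>N/2$ implies $a'>\lfloor N/2\rfloor$, whence $\varphi_N(a')=a'-N<0$, so $a<0$. In every case $a\leq 0$, i.e. $a\in\mathbf{Z}_{N,\leq 0}$.

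For part $(1)$ I translate each surviving case. Because $\varphi_N(c)\equiv c\pmod N$ (Remark 4.2), the congruence condition of Proposition 5.1, $(2)$, namely $a'+b'\equiv 0,1\pmod N$, becomes $a+b\equiv 0,1\pmod N$ in $\mathbf{Z}_{N,\pm}$, and the $a'=0$ subcases $b'\in\{0,1\}$ give $(a,b)=(0,0),(0,1)$, which likewise satisfy $a+b\in\{0,1\}$. The crucial step is to upgrade these congruences to exact equalities. Using $a\leq 0$ from part $(0)$ together with the bounds $-\lfloor\frac{N-1}{2}\rfloor\leq a,b\leq\lfloor\frac N2\rfloor$, one checks that $-N<a+b<N$. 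Hence $a+b\equiv 0$ forces $a+b=0$, the unique multiple of $N$ in that range, giving $(a,b)=(-k,k)$; whereas $a+b\equiv 1$ leaves the two candidates $a+b=1$ and $a+b=1-N$. Excluding the wraparound value $1-N$ is where I expect the main friction: $a+b=1-N$ can occur only when $N$ is odd and $a=b=-\tfrac{N-1}{2}$, but the strict inequality $a'>b'$ in Proposition 5.1, $(2)$ — equivalently $a\neq b$, since $\varphi_N$ is injective — rules this out, leaving $a+b=1$ and $(a,b)=(-k,k+1)$.

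Finally I pin down the ranges of $k$. Membership of $(-k,k)$ and $(-k,k+1)$ in $\mathbf{Z}_{N,\pm}\times\mathbf{Z}_{N,\pm}$ requires $k\leq\lfloor\frac{N-1}{2}\rfloor$ and $k+1\leq\lfloor\frac N2\rfloor$ respectively; since $\lfloor\frac N2\rfloor-1=\lfloor\frac{N-2}{2}\rfloor$, the latter reads $k\leq\lfloor\frac{N-2}{2}\rfloor$, producing exactly $Par_N^0$ and $Par_N^1$. Disjointness is immediate, as a type-$0$ pair has $a+b=0$ while a type-$1$ pair has $a+b=1$. Counting $|Par_N^0|=\lfloor\frac{N-1}{2}\rfloor+1$ and $|Par_N^1|=\lfloor\frac{N-2}{2}\rfloor+1$ and summing, separately for $N$ even and $N$ odd, gives $N$ in both cases, which completes the proof.
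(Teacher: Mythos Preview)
Your proof is correct and follows essentially the same route as the paper's: transport the question back to $\mathbf{Z}_N$ via $\varphi_N$, invoke Proposition~5.1 and Corollary~5.1 to pin down $a'=\varphi_N^{-1}(a)$ and the congruence $a'+b'\equiv 0,1\pmod N$, and then read off the $\mathbf{Z}_{N,\pm}$-form of the pairs. Your write-up is in fact slightly more explicit than the paper's in a couple of places---you spell out the bound $-N<a+b<N$ that upgrades the congruences to genuine equalities, and you isolate and dispose of the wraparound candidate $a+b=1-N$ via $a\neq b$---but the underlying argument is the same.
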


\begin{proof}
(0) By Definition 5.5, if $rav_{\pm}(a,b)=0$, then
\begin{eqnarray*}
rav(\varphi_N^{-1}(a),\varphi_N^{-1}(b))=\varphi_N^{-1}(0)=0.
\end{eqnarray*}
This implies by Corollary 5.1 that $\varphi_N^{-1}(a)=0$ or $\frac{N}{2}<\varphi_N^{-1}(a)\leq N-1$, which is equivalent to the condition that $a\in\mathbf{Z}_{N,\leq 0}$. 

\noindent
(1) Suppose that $(a,b)\in Par_N$. Since we have $(0,0), (0,1)\in Par_N$ by Proposition 5.1, (0), we may assume that $a\neq 0$, and hence $a\in\mathbf{Z}_{N,-}$. The latter implies in particular that $\varphi_N^{-1}(a)\geq 2$. Therefore it follows from Proposition 5.3, (2) that
\begin{eqnarray*}
\varphi_N^{-1}(a)+\varphi_N^{-1}(b)\equiv 0, 1\pmod N.
\end{eqnarray*}
This is equivalent to
\begin{eqnarray}
a+b\equiv 0, 1\pmod N
\end{eqnarray}
by Remark 4.2. Since we can put $a=-k$ with $k\in [1,\lfloor\frac{N-1}{2}\rfloor]$, the condition (5.3) implies that $b=k$ or $b=k+1$. The latter alternative is excluded only when $N$ is odd and $k=\frac{N-1}{2}$ by the shape of $\mathbf{Z}_{N,\pm}$. The last assertion follows from the equality
\begin{eqnarray*}
|Par_N^0|+|Par_N^1|=(\lfloor\frac{N-1}{2}\rfloor+1)+(\lfloor\frac{N-2}{2}\rfloor+1)=N.
\end{eqnarray*}
This completes the proof.
 \end{proof}

For small $N$, the set $Par_N$ is given by the following:
\begin{eqnarray*}
\begin{array}{lcl}
N & \vline & Par_N \\
\hline
3 & \vline & (0,0), (0,1), (-1,1) \\
4 & \vline & (0,0), (0,1), (-1,1), (-1,2) \\
5 & \vline & (0,0), (0,1), (-1,1), (-1,2), (-2,2) \\
6 & \vline & (0,0), (0,1), (-1,1), (-1,2), (-2,2), (-2,3) \\
\end{array}
\end{eqnarray*}
\begin{center}
Table 5.1. $Par_N$ for $N=3,4,5, 6$
\end{center}

\begin{rem}
When $n=1$, there is only one ancestor of zero in $\mathbf{R}_{N,\pm}^1$, namely, the singleton $(0)$. This is because the discrete average map $Rav_{\pm}$ is defined to be the identity map on $\mathbf{R}_{N,\pm}^1$. For this reason we assume $n\geq 2$ from now on.
\end{rem}

Based on Proposition 5.2, we can understand completely the structure of the set of ancestors of zero, and hence the shape of the rhythm polynomial $Bav_N^0$. The following proposition is a rephrasing of Definition 5.4:

\begin{prp}
When $n\geq 2$, an element $\mathbf{a}=(a_0,\cdots,a_{n-1})\in\mathbf{R}_{N,\pm}^n$ is an ancestor of zero if and only if there exists an index $i\in\mathbf{Z}_n$ such that $(a_i,a_{i+_n1})\in Par_N$.
\end{prp}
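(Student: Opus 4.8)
The plan is to read the statement directly off the definition of $Rav_{\pm}$ by identifying its coordinates one at a time. First I would recall, from Definition 5.6, that $\mathbf{a}$ is an ancestor of zero precisely when $0\in Rav_{\pm}(\mathbf{a})$. Thus the entire assertion reduces to describing which entry of the tuple $Rav_{\pm}(\mathbf{a})$ is allowed to vanish, and matching the vanishing condition against the defining condition of $Par_N$.

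The central step is to compute the coordinates of $Rav_{\pm}(\mathbf{a})$. Writing $\mathbf{b}=\varphi_N^{-1}(\mathbf{a})=(b_0,\cdots,b_{n-1})$ with $b_i=\varphi_N^{-1}(a_i)$, and using that $\varphi_N$ acts coordinatewise together with the defining formula for $Rav$ in Definition 1.3, the definition $Rav_{\pm}(\mathbf{a})=\varphi_N(Rav(\varphi_N^{-1}(\mathbf{a})))$ in Definition 5.5 yields that the $i$-th entry of $Rav_{\pm}(\mathbf{a})$ equals $\varphi_N(av_{\mathbf{Z}_N}(b_i,b_{i+_n1}))$. By the very definition of the $\mathbf{Z}_{N,\pm}$-average in Definition 5.5, this is nothing but $av_{\mathbf{Z}_N,\pm}(a_i,a_{i+_n1})$. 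Hence
\[
Rav_{\pm}(\mathbf{a})=\bigl(av_{\mathbf{Z}_N,\pm}(a_0,a_1),\cdots,av_{\mathbf{Z}_N,\pm}(a_{n-1},a_0)\bigr).
\]

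With this coordinate description in hand the conclusion is immediate and gives both implications at once: $0\in Rav_{\pm}(\mathbf{a})$ holds if and only if there exists an index $i\in\mathbf{Z}_n$ with $av_{\mathbf{Z}_N,\pm}(a_i,a_{i+_n1})=0$, and since this average is exactly the map denoted $rav_{\pm}$ used to define $Par_N$ in Definition 5.6, the vanishing condition is by definition precisely $(a_i,a_{i+_n1})\in Par_N$.

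The only point requiring attention — what would count as the main, albeit mild, obstacle — is the bookkeeping of the cyclic index $i+_n1$ (including the wrap-around pair $(a_{n-1},a_0)$) and the verification that conjugation by $\varphi_N$ genuinely commutes with the coordinatewise passage from $av_{\mathbf{Z}_N}$ to $av_{\mathbf{Z}_N,\pm}$. Because $\varphi_N$ is a bijection applied entry by entry and $Rav$ is built from averages of consecutive pairs, this check is routine and no real difficulty is expected, which is consistent with the proposition being, as announced, a mere rephrasing of the underlying definitions.
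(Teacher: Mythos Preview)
Your proposal is correct and matches the paper's treatment: the paper gives no separate proof at all, simply introducing the proposition as ``a rephrasing of Definition~5.4,'' and your argument is precisely the explicit unpacking of that rephrasing---computing the $i$-th coordinate of $Rav_{\pm}(\mathbf{a})$ as $av_{\mathbf{Z}_N,\pm}(a_i,a_{i+_n1})$ and identifying its vanishing with membership in $Par_N$. There is nothing to add; the bookkeeping you flag about cyclic indices and conjugation by $\varphi_N$ is indeed routine.
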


\noindent
In view of this, we introduce the following:

\begin{df}
For any $(a,b)\in Par_N\setminus\{(0,0)\}$ and for any $n\in[2,N]$, we put
\begin{eqnarray*}
Anc_{(a,b)}^n=\{\mathbf{a}\in\mathbf{I}_{N,\pm}^n;(a_i,a_{i+_n1})=(a,b)\mbox{ for some }i\in\mathbf{Z}_n\},
\end{eqnarray*}
and
\begin{eqnarray}
Anc_{(a,b)}=\bigcup_{n=2}^NAnc_{(a,b)}^n.
\end{eqnarray}
When $(a,b)=(0,0)$, we put
\begin{eqnarray*}
Anc_{(0,0)}=\{(0)\}.
\end{eqnarray*}
Collecting these families of ancestors, we put
\begin{eqnarray*}
Anc_N=\bigcup_{(a,b)\in Par_N}Anc_{(a,b)},
\end{eqnarray*}
and call it {\rm the set of ancestors of zero}.
\end{df}

In order to formulate a Boolean counterpart of the notion of ancestor of zero, we put
\begin{eqnarray}
[a,b]_{N,\pm}&=&\varphi_N([\varphi_N^{-1}(a),\varphi_N^{-1}(b)]_N),\\
\mbox{$[$}a,b)_{N,\pm}&=&[a,b]_{N,\pm}\setminus\{b\}
\end{eqnarray}
for any $(a,b)\in\mathbf{Z}_{N,\pm}\times\mathbf{Z}_{N,\pm}$. Then we have the following:

\begin{prp}
Fix an arbitrary $(a,b)\in Par_N\setminus\{(0,0)\}$. For any $\mathbf{a}\in\mathbf{I}_{N,\pm}^n$ with $n\geq 2$, let $\mathbf{v}=ItoB(\mathbf{a})\in\mathbf{B}_{N,\pm}$. Then the following three conditions are equivalent:\\
$(1)$ $\mathbf{a}\in Anc_{(a,b)}$.\\
$(2)$ $\mathbf{a}\cap[a,b]_{N,\pm}=\{a,b\}$.\\
$(3)$ $v_a=1,v_{a+_N1}=0,\cdots,v_{b-_N1}=0,v_b=1$.\\
\end{prp}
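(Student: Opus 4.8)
The plan is to prove the chain $(1)\Leftrightarrow(2)\Leftrightarrow(3)$ by first normalizing the interval $[a,b]_{N,\pm}$ into an ordinary integer interval, then reading off $(2)\Leftrightarrow(3)$ from the definition of $ItoB$, and finally deriving $(1)\Leftrightarrow(2)$ from the monotonicity built into $\mathbf{I}_{N,\pm}^n$.

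First I would record a preliminary observation: for every $(a,b)\in Par_N\setminus\{(0,0)\}$ one has $a\leq 0<b$, and the set $[a,b]_{N,\pm}$ coincides with the ordinary integer interval $\{a,a+1,\ldots,b\}\subset\mathbf{Z}_{N,\pm}$. This follows by feeding the explicit description of $Par_N$ from Proposition 5.2 into the defining equation (5.5). Indeed, writing $a'=\varphi_N^{-1}(a)$ and $b'=\varphi_N^{-1}(b)$, for $a<0$ we have $a'=a+N>\lfloor N/2\rfloor$ while $b'=b\leq\lfloor N/2\rfloor$, so the $\mathbf{Z}_N$-interval $[a',b']_N$ is the wrap-around block $\{a+N,\ldots,N-1,0,\ldots,b\}$, whose image under $\varphi_N$ is exactly $\{a,a+1,\ldots,-1,0,1,\ldots,b\}$; the degenerate case $(a,b)=(0,1)$ gives $\{0,1\}$ directly. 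In particular the successive indices $a,a+_N1,\ldots,b-_N1,b$ appearing in condition $(3)$ are the honest successors $a,a+1,\ldots,b$, and no further wrap occurs inside the interval.

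Next, $(2)\Leftrightarrow(3)$ is essentially the definition of $ItoB$. By Definition 5.3 we have $v_i=1$ if and only if $i\in\mathbf{a}$. Hence condition $(3)$, which prescribes $v_a=v_b=1$ together with $v_c=0$ for every interior index $c$ with $a<c<b$, asserts precisely that the only elements of $\mathbf{a}$ lying in $\{a,a+1,\ldots,b\}=[a,b]_{N,\pm}$ are $a$ and $b$ themselves. This is exactly the statement $\mathbf{a}\cap[a,b]_{N,\pm}=\{a,b\}$ of condition $(2)$.

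Finally, for $(1)\Leftrightarrow(2)$ I would exploit that $\mathbf{a}=(a_0<a_1<\cdots<a_{n-1})$ is strictly increasing and that $a<b$. For $(1)\Rightarrow(2)$, if $(a_i,a_{i+_n1})=(a,b)$ for some $i$, then the index $i=n-1$ is impossible, since the cyclic wrap pair $(a_{n-1},a_0)$ satisfies $a_{n-1}>a_0$ whereas $a<b$; thus $i<n-1$ and $a_i=a$, $a_{i+1}=b$ are genuinely adjacent in the increasing order, so no element of $\mathbf{a}$ can lie strictly between $a$ and $b$, which together with $a,b\in\mathbf{a}$ yields $(2)$. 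Conversely, $(2)$ forces $a,b\in\mathbf{a}$ with no element of $\mathbf{a}$ strictly between them, so by monotonicity they occupy consecutive positions $a_i=a$, $a_{i+1}=b$, giving $(a_i,a_{i+_n1})=(a,b)$ and hence $\mathbf{a}\in Anc_{(a,b)}$. The main obstacle I anticipate is the preliminary interval computation: one must check carefully, across the two parity families $Par_N^0$ and $Par_N^1$, that the $\varphi_N$-image of the wrap-around $\mathbf{Z}_N$-interval is the clean integer interval $\{a,\ldots,b\}$ and that $a\leq 0<b$. Once this normalization is secured, the remaining two equivalences reduce to bookkeeping with the support of $\mathbf{v}$ and the monotonicity of increasing rhythms, the only delicate point being the exclusion of the wrap pair $(a_{n-1},a_0)$, which is guaranteed precisely by the inequality $a<b$ valid for all parental pairs other than $(0,0)$.
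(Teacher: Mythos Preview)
Your proof is correct but takes a different route from the paper's. The paper argues the cycle $(1)\Rightarrow(2)\Rightarrow(3)\Rightarrow(1)$: for $(1)\Rightarrow(2)$ it invokes the cyclic partition $\mathbf{Z}_{N,\pm}=\bigsqcup_{j}[a_j,a_{j+_n1})_{N,\pm}$ transported from (1.8), so that disjointness forces $\mathbf{a}\cap[a,b]=\{a,b\}$; the step $(3)\Rightarrow(1)$ is handled tersely by reading off adjacent indices from the support pattern. You instead first normalize $[a,b]_{N,\pm}$ to the ordinary integer interval $\{a,a+1,\ldots,b\}$ using the explicit description of $Par_N$ from Proposition 5.2, then prove $(1)\Leftrightarrow(2)$ directly from the monotonicity of $\mathbf{a}\in\mathbf{I}_{N,\pm}^n$, explicitly ruling out the wrap pair $(a_{n-1},a_0)$ via $a<b$. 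The paper's approach is shorter and avoids unwinding $\varphi_N$, leaning on the structural partition already established for rhythms. Your approach is more self-contained and makes explicit two points the paper leaves implicit: that $[a,b]_{N,\pm}$ really is a contiguous block $\{a,\ldots,b\}$ for parental pairs (which underlies the meaning of condition $(3)$), and why the consecutive-pair index $i$ cannot be the wrap index $n-1$. Either argument is adequate; yours trades a citation of (1.8) for a concrete interval computation.
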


\begin{rem}
Here in the item (3), we denote the addition on $\mathbf{Z}_{N,\pm}$ also by "$+_N$", which is used originally for the addition on $\mathbf{Z}_N$. Strictly speaking, for any $(a,b)\in\mathbf{Z}_{N,\pm}\times\mathbf{Z}_{N,\pm}$, we should define a new addition "$a+_{N,\pm}b$" by the rule 
\begin{eqnarray*}
a+_{N,\pm}b=\varphi_N(\varphi_N^{-1}(a)+_N\varphi_N^{-1}(b)).
\end{eqnarray*}
We, however, abuse the notation, since the context will tell us which meaning we are adopting.
\end{rem}

\begin{proof}
$(1)\Rightarrow (2)$: Let $\mathbf{a}=(a_0,\cdots, a_{n-1})$. The statement (1) implies by Definition 5.7 that there exists an index $i\in\mathbf{Z}_n$ such that $(a_i,a_{i+_n1})=(a,b)$. On the other hand, by (1.8), we have
\begin{eqnarray*}
\mathbf{Z}_{N,\pm}=\bigsqcup_{j\in\mathbf{Z}_n}[a_j,a_{j+_n1})_{N,\pm}.
\end{eqnarray*}
Since the right hand side is a disjoint union, we have $\mathbf{a}\cap [a,b]=\{a,b\}$.\\

\noindent
$(2)\Rightarrow (3)$: By the definition of the map $ItoB$, the $i$-th coordinate $v_i$, $i\in\mathbf{Z}_{N,\pm}$, of $\mathbf{v}=ItoB(\mathbf{a})$ is given by
\begin{eqnarray*}
v_i=
\left\{
\begin{array}{ll}
1, & i\in \mathbf{a},\\
0, & i\not\in \mathbf{a}. \\
\end{array}
\right.
\end{eqnarray*}
Therefore the equality $\mathbf{a}\cap[a,b]_{N,\pm}=\{a,b\}$ in (2) implies the equalities $v_a=1,v_{a+_N1}=0,\cdots,v_{b-_N1}=0,v_b=1$ in the statement (3). \\

\noindent
$(3)\Rightarrow (1)$: It follows from (3) and the definition of $ItoB$ that there exists an $i\in\mathbf{Z}_{N,\pm}$ such that $a_i=a, a_{i+_n1}=b$. Hence it follows from Definition 5.7 that the statement (1) holds true. This completes the proof.
\end{proof}

In view of this proposition, we introduce the notion of $\mathbf{B}$-ancestor of zero as follows:

\begin{df}
Every element in $ItoB(Anc_N)\subset \mathbf{B}_{N,\pm}$ are called a $\mathbf{B}$-ancestor of zero. We denote by $\mathbf{B}$-$Anc_N$ the set of $\mathbf{B}$-ancestors of zero. Furthermore for each $(a,b)\in Par_N$, we put
\begin{eqnarray*}
\mathbf{B}\mbox{-}Anc_{(a,b)}=ItoB(Anc_{(a,b)})
\end{eqnarray*}
so that we have
\begin{eqnarray*}
\mathbf{B}\mbox{-}Anc_N=\bigcup_{(a,b)\in Par_N}\mathbf{B}\mbox{-}Anc_{(a,b)}
\end{eqnarray*}
\end{df}

\noindent
The following notation will simplify our description below:

\begin{df}
For any $(a,b)\in Par_N\setminus\{(0,0)\}$, we denote by $\mathbf{e}_{[a,b]}\in\mathbf{F}_2^{b-a+1}$ the Boolean vector whose $i$-th coordinate $\mathbf{e}_{[a,b]}^i$ $(1\leq i\leq b-a+1)$ is defined by
\begin{eqnarray*}
\mathbf{e}_{[a,b]}^i=
\left\{
\begin{array}{ll}
1, & \mbox{ if }i=1\mbox{ or }i=b-a+1,\\
0, & \mbox{ if }1<i<b-a+1.\\
\end{array}
\right.
\end{eqnarray*}
In other words, the vector $\mathbf{e}_{[a,b]}$ is a Boolean vector of length $|[a,b]|$ such that the first and the last coordinates are $1$ and the others are $0$.
\end{df}

\noindent
The equivalence of (1) and (3) in Proposition 5.4 implies the following:

\begin{cor}
For any $(a,b)\in Par_N\setminus\{(0,0)\}$, let $pr_{[a,b]}:\mathbf{B}_{N,\pm}\rightarrow \mathbf{F}_2^{b-a+1}$ denote the projection defined by 
\begin{eqnarray*}
pr_{[a,b]}(v_{\ell},\cdots,v_g)=(v_a,\cdots,v_b).
\end{eqnarray*}
Then we have
\begin{eqnarray*}
\mathbf{B}\mbox{-}Anc_{(a,b)}=pr_{[a,b]}^{-1}(\mathbf{e}_{[a,b]}).
\end{eqnarray*}
\end{cor}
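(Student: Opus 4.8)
The plan is to establish the asserted set equality $\mathbf{B}\mbox{-}Anc_{(a,b)}=pr_{[a,b]}^{-1}(\mathbf{e}_{[a,b]})$ by proving the two inclusions separately, letting the equivalence of conditions $(1)$ and $(3)$ in Proposition 5.4 carry almost the entire argument. The decisive observation is purely a matter of unwinding notation: condition $(3)$, namely $v_a=1, v_{a+_N1}=0,\cdots,v_{b-_N1}=0, v_b=1$, says precisely that the string $(v_a,\cdots,v_b)$ equals the pattern $\mathbf{e}_{[a,b]}$, which is to say $pr_{[a,b]}(\mathbf{v})=\mathbf{e}_{[a,b]}$. Once this dictionary is made explicit, Proposition 5.4 already contains the corollary.

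For the inclusion $\mathbf{B}\mbox{-}Anc_{(a,b)}\subseteq pr_{[a,b]}^{-1}(\mathbf{e}_{[a,b]})$, I would take an arbitrary $\mathbf{v}\in\mathbf{B}\mbox{-}Anc_{(a,b)}=ItoB(Anc_{(a,b)})$ and write $\mathbf{v}=ItoB(\mathbf{a})$ with $\mathbf{a}\in Anc_{(a,b)}$. By Definition 5.7 such an $\mathbf{a}$ automatically lies in some $\mathbf{I}_{N,\pm}^n$ with $n\geq 2$, so Proposition 5.4 applies and condition $(1)$ yields condition $(3)$; hence $(v_a,\cdots,v_b)=\mathbf{e}_{[a,b]}$ and $\mathbf{v}\in pr_{[a,b]}^{-1}(\mathbf{e}_{[a,b]})$. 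For the reverse inclusion I would start from any $\mathbf{v}$ with $pr_{[a,b]}(\mathbf{v})=\mathbf{e}_{[a,b]}$, set $\mathbf{a}=BtoI(\mathbf{v})$, and use that $ItoB$ and $BtoI$ are mutually inverse to write $\mathbf{v}=ItoB(\mathbf{a})$. Condition $(3)$ holds for $\mathbf{v}$ by hypothesis, so Proposition 5.4 gives $\mathbf{a}\in Anc_{(a,b)}$, whence $\mathbf{v}=ItoB(\mathbf{a})\in ItoB(Anc_{(a,b)})=\mathbf{B}\mbox{-}Anc_{(a,b)}$.

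The only point requiring a little care is that Proposition 5.4 is stated for $\mathbf{a}\in\mathbf{I}_{N,\pm}^n$ with $n\geq 2$, so in the reverse inclusion I must first confirm that $\mathbf{a}=BtoI(\mathbf{v})$ has at least two onsets before invoking it. Here I would appeal to the explicit description $Par_N=Par_N^0\sqcup Par_N^1$ from Proposition 5.2: for every $(a,b)\in Par_N\setminus\{(0,0)\}$ one has $a=-k\leq 0$ with $b\in\{k,k+1\}$ and $b\geq 1$, so $a<b$, and the pattern $\mathbf{e}_{[a,b]}$ forces two distinct ones into $\mathbf{v}$ at the indices $a$ and $b$. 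Consequently $n=|supp(\mathbf{v})|\geq 2$, and Proposition 5.4 applies as required. I do not expect any genuine obstacle beyond this bookkeeping, since the substantive combinatorial content — the identification of ancestors of zero with the prescribed coordinate pattern — is already discharged by Proposition 5.4; the corollary merely repackages it in the language of the projection $pr_{[a,b]}$.
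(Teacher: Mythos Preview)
Your proposal is correct and follows exactly the route the paper indicates: the paper simply introduces the corollary with the sentence ``The equivalence of (1) and (3) in Proposition 5.4 implies the following,'' giving no further argument. Your write-up fleshes out that one-line justification into explicit inclusions and, in addition, takes care of the $n\geq 2$ hypothesis needed to invoke Proposition 5.4 --- a point the paper leaves implicit.
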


\noindent
This implies further the following:

\begin{cor}
For any $(a,b)\in Par_N\setminus\{(0,0)\}$, the number of elements in $Anc_{(a,b)}$ is given by
\begin{eqnarray*}
|Anc_{(a,b)}|=2^{N-1-(b-a)}.
\end{eqnarray*}
\end{cor}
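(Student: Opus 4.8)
The plan is to move the count to the Boolean side, where it reduces to an elementary fiber computation. First I would invoke the bijectivity of $ItoB$: by Definition 5.3 the maps $ItoB$ and $BtoI$ are mutually inverse bijections between $\mathbf{I}_{N,\pm}^n$ and $\mathbf{B}_{N,\pm}^n$ for every $n$, so collectively $ItoB:\mathbf{I}_{N,\pm}\rightarrow\mathbf{B}_{N,\pm}$ is a bijection. Since $Anc_{(a,b)}\subseteq\mathbf{I}_{N,\pm}$, its restriction is a bijection onto $\mathbf{B}\mbox{-}Anc_{(a,b)}=ItoB(Anc_{(a,b)})$ by Definition 5.8, and therefore $|Anc_{(a,b)}|=|\mathbf{B}\mbox{-}Anc_{(a,b)}|$. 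It thus suffices to count $\mathbf{B}\mbox{-}Anc_{(a,b)}$.

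Next I would apply Corollary 5.2, which identifies $\mathbf{B}\mbox{-}Anc_{(a,b)}$ with the single fiber $pr_{[a,b]}^{-1}(\mathbf{e}_{[a,b]})$ of the projection $pr_{[a,b]}:\mathbf{B}_{N,\pm}\rightarrow\mathbf{F}_2^{b-a+1}$. Counting this fiber is immediate: a vector $\mathbf{v}=(v_\ell,\cdots,v_g)$ lies in it precisely when its coordinates $v_a,\cdots,v_b$ are forced to equal the fixed pattern $\mathbf{e}_{[a,b]}$, while every coordinate whose index lies outside $[a,b]_{N,\pm}$ is unconstrained and ranges independently over $\mathbf{F}_2$. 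Hence the cardinality of the fiber equals $2^{r}$, where $r$ is the number of free coordinates.

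To determine $r$ I must pin down the size of $[a,b]_{N,\pm}$. Because $(a,b)\in Par_N\setminus\{(0,0)\}$, the explicit description in Proposition 5.2 shows that $a=-k$ and $b\in\{k,k+1\}$ for some $k\ge 0$ with $b>0$, whence $a\leq 0<b$ and in particular $a<b$. As $\mathbf{Z}_{N,\pm}=\{\ell,\ell+1,\cdots,g\}$ is a block of $N$ consecutive integers containing both endpoints, the interval $[a,b]_{N,\pm}$ collapses to the straight block $\{a,a+1,\cdots,b\}$ of cardinality $b-a+1$. Consequently $pr_{[a,b]}$ constrains exactly $b-a+1$ of the $N$ coordinates, leaving $r=N-(b-a+1)=N-1-(b-a)$ free coordinates, and
\[
|Anc_{(a,b)}|=|pr_{[a,b]}^{-1}(\mathbf{e}_{[a,b]})|=2^{N-1-(b-a)}.
\]

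There is no deep obstacle here; the computation is a direct fiber count. The one point demanding care is the claim that $[a,b]_{N,\pm}$ is the ordinary integer block $\{a,\cdots,b\}$ rather than a wrap-around interval. This is exactly where the passage to the least-absolute-remainder index set pays off: under $\varphi_N$ the interval $[\varphi_N^{-1}(a),\varphi_N^{-1}(b)]_N$ in $\mathbf{Z}_N$ genuinely wraps (since $\varphi_N^{-1}(a)>\varphi_N^{-1}(b)$ when $a\le 0<b$), yet it is carried to the contiguous interval straddling $0$ in $\mathbf{Z}_{N,\pm}$, so no reduction modulo $N$ intervenes and the endpoint-to-endpoint count is literally $b-a+1$.
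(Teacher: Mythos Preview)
Your proof is correct and follows essentially the same route as the paper's: reduce to $\mathbf{B}\mbox{-}Anc_{(a,b)}$ via the bijectivity of $ItoB$, identify it with the fiber $pr_{[a,b]}^{-1}(\mathbf{e}_{[a,b]})$ by Corollary~5.2, and count the free coordinates. Your additional care in verifying that $[a,b]_{N,\pm}$ is the ordinary block $\{a,\ldots,b\}$ of size $b-a+1$ (using Proposition~5.2 to pin down the sign pattern of $a$ and $b$) is a detail the paper leaves implicit, but it does not constitute a different approach.
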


\begin{proof}
By the bijectivity of the map $ItoB$, we have only to count the number of elements in $\mathbf{B}$-$Anc_{[a,b]}$, which coincides with $pr_{[a,b]}^{-1}(\mathbf{e}_{[a,b]})$ by Corollary 5.2. Hence we can compute as follows:
\begin{eqnarray*}
|Anc_{(a,b)}|&=&|\mathbf{B}\mbox{-}Anc_{(a,b)}|\\
&=&|pr_{[a,b]}^{-1}(\mathbf{e}_{[a,b]})|\\
&=&2^{N-(b-a+1)}.
\end{eqnarray*}
This finishes the proof.
\end{proof}

We can deduce from this corollary that the polynomial $Bav_N^0$ is {\it balanced}.

\begin{df}
A Boolean function $P:\mathbf{F}_2^n\rightarrow\mathbf{F}_2$ is said to be {\rm balanced} if
\begin{eqnarray*}
|P^{-1}(\{0\})|=|P^{-1}(\{1\})|=2^{n-1}.
\end{eqnarray*}
\end{df}

\noindent
For the importance of the balanced polynomials in cryptology, we refer the reader to [4, Chapter 3].

\begin{thm}
The $0$-th rhythm polynomial $Bav_N^0$ is balanced.
\end{thm}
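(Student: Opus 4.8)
The plan is to count the fibre $(Bav_N^0)^{-1}(\{1\})$ directly and to show it has exactly $2^{N-1}$ elements; since the domain $\mathbf{B}_N=\mathbf{F}_2^N$ has $2^N$ elements, this forces $|(Bav_N^0)^{-1}(\{0\})|=2^{N-1}$ as well, which is precisely the assertion. By Lemma 5.1, $Bav_N^0(\mathbf{v})=1$ holds exactly when $BtoI(\mathbf{v})$ is an ancestor of zero, and transporting this statement through the bijection $\varphi_N$ (which fixes $0$) to the index set $\mathbf{Z}_{N,\pm}$ shows that the number of such $\mathbf{v}$ equals $|\mathbf{B}\mbox{-}Anc_N|$. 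Thus the whole problem reduces to computing the cardinality of $\mathbf{B}\mbox{-}Anc_N$.

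The crucial step is to show that the union $\mathbf{B}\mbox{-}Anc_N=\bigcup_{(a,b)\in Par_N}\mathbf{B}\mbox{-}Anc_{(a,b)}$ is in fact \emph{disjoint}. For this I would first observe that every parental pair $(a,b)\in Par_N\setminus\{(0,0)\}$ satisfies $0\in[a,b)_{N,\pm}$, since $a\leq 0<b$ holds for each pair listed in Proposition 5.2. Now if a rhythm $\mathbf{a}=(a_0,\cdots,a_{n-1})\in\mathbf{I}_{N,\pm}^n$ with $n\geq 2$ had two distinct consecutive pairs $(a_i,a_{i+_n1})$ and $(a_j,a_{j+_n1})$ both lying in $Par_N$, then $0$ would belong to each of the two half-open intervals $[a_i,a_{i+_n1})_{N,\pm}$ and $[a_j,a_{j+_n1})_{N,\pm}$; but these are disjoint by the partition (1.8), a contradiction. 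Hence each ancestor of zero lies in exactly one $\mathbf{B}\mbox{-}Anc_{(a,b)}$. The remaining pair $(0,0)$ contributes only the single vector supported at $0$ alone (the $n=1$ stratum), which is disjoint from the rest because every other $\mathbf{B}\mbox{-}Anc_{(a,b)}$ forces $v_b=1$ for some $b\neq 0$.

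Granting disjointness, I would apply Corollary 5.3, namely $|Anc_{(a,b)}|=2^{N-1-(b-a)}$, together with the explicit list $Par_N=Par_N^0\sqcup Par_N^1$ from Proposition 5.2, to obtain
\begin{eqnarray*}
|\mathbf{B}\mbox{-}Anc_N|=1+\sum_{k=1}^{\lfloor\frac{N-1}{2}\rfloor}2^{N-1-2k}+\sum_{k=0}^{\lfloor\frac{N-2}{2}\rfloor}2^{N-2-2k},
\end{eqnarray*}
where $b-a=2k$ on $Par_N^0$, $b-a=2k+1$ on $Par_N^1$, and the leading $1$ comes from $(0,0)$. The two sums supply powers $2^j$ in complementary exponent ranges, so that together they reconstitute the full geometric series $\sum_{j=0}^{N-2}2^j=2^{N-1}-1$; adding the $1$ from $(0,0)$ gives exactly $2^{N-1}$. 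One checks the two parities of $N$ separately, to confirm that the exponent ranges of the two sums interlock without gaps or overlaps.

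I expect the main obstacle to be the disjointness argument of the second paragraph rather than the final summation, which is routine once the count is set up. The delicate point is that many parental pairs share the nearly symmetric interval around $0$, so a naive positional comparison does not immediately rule out double counting; one genuinely needs the partition property (1.8) to guarantee that the zero beat singles out a unique consecutive pair. After disjointness is secured, Corollary 5.3 and Proposition 5.2 turn the remainder into a mechanical evaluation of a finite geometric sum.
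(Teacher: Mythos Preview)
Your proposal is correct and follows essentially the same approach as the paper: both identify $(Bav_N^0)^{-1}(\{1\})$ with the set of ancestors of zero, decompose it into the pieces $Anc_{(a,b)}$ indexed by $Par_N=Par_N^0\sqcup Par_N^1$, apply Corollary~5.3 to each piece, and then observe that the resulting exponents $\{2k:1\le k\le\lfloor\frac{N-1}{2}\rfloor\}\cup\{2k+1:0\le k\le\lfloor\frac{N-2}{2}\rfloor\}$ exhaust $[1,N-1]$ so that the sum collapses to $2^{N-1}-1+1=2^{N-1}$. Your disjointness argument via the partition (1.8) and the observation $0\in[a,b)_{N,\pm}$ for every $(a,b)\in Par_N\setminus\{(0,0)\}$ actually supplies a justification that the paper simply asserts without proof.
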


\begin{proof}
The set of ancestors of zero is the disjoint union of $Anc_{(a,b)}$ $((a,b)\in Par_N\setminus\{(0,0)\})$ and $Anc_{(0,0)}$. Hence the total is computed through Corollary 5.3 as follows:
\begin{eqnarray}
&&\left(\sum_{(a,b)\in Par_N\setminus\{(0,0)\}}|Anc_{(a,b)}|\right)+|Anc_{(0,0)}|\nonumber\\
&&=\left(\sum_{(a,b)\in Par_N\setminus\{(0,0)\}}2^{N-1-(b-a)}\right)+1\nonumber\\
&&=\left(\sum_{(a,b)\in Par_N^0\setminus\{(0,0)\}}2^{N-1-(b-a)}+\sum_{(a,b)\in Par_N^1}2^{N-1-(b-a)}\right)+1\nonumber\\
&&=\left(\sum_{1\leq k\leq \lfloor\frac{N-1}{2}\rfloor}2^{N-1-(k-(-k))}+\sum_{0\leq k\leq \lfloor\frac{N-2}{2}\rfloor}2^{N-1-((k+1)-(-k))}\right)+1\nonumber\\
&&\hspace{70mm}(\mbox{by Proposition 5.2, (1))}\nonumber\\
&&=\left(\sum_{1\leq k\leq \lfloor\frac{N-1}{2}\rfloor}2^{N-1-2k}+\sum_{0\leq k\leq \lfloor\frac{N-2}{2}\rfloor}2^{N-1-(2k+1)}\right)+1
\end{eqnarray}
Here we notice that the set $\{2k;1\leq k\leq \lfloor\frac{N-1}{2}\rfloor\}$ coincides with the set of positive even numbers in the interval $[0,N-1]$, and that the set $\{2k+1;0\leq k\leq \lfloor\frac{N-2}{2}\rfloor\}$ coincides with the set of odd numbers in the interval $[0,N-1]$. Therefore we have
\begin{eqnarray*}
\{2k;1\leq k\leq \lfloor\frac{N-1}{2}\rfloor\}\cup \{2k+1;0\leq k\leq \lfloor\frac{N-2}{2}\rfloor\}=[1,N-1]
\end{eqnarray*}
Hence the rightmost side of (5.7) is equal to
\begin{eqnarray*}
\left(\sum_{1\leq j\leq N-1}2^{N-1-j}\right)+1=\left(\sum_{0\leq j\leq N-2}2^j\right)+1=2^{N-1}.
\end{eqnarray*}
This completes the proof.
\end{proof}

\subsection{Main theorem on the structure of the rhythm polynomial $Bav_N^0$}
In order to state our main theorem on $Bav_N^0$, we introduce some notations:

\begin{df}
For any $(a,b)\in Par_N\setminus\{(0,0)\}$, we denote by $f_{[a,b]}$ the Boolean polynomial given by
\begin{eqnarray*}
f_{[a,b]}=(y_a+1)\left(\prod_{k=a+1}^{b-1}y_k\right)(y_b+1).
\end{eqnarray*}
Furthermore we set
\begin{eqnarray*}
f_{[0,0]}^N=(y_0+1)\prod_{k\in\mathbf{Z}_{N,\pm}\setminus \{0\}}y_k.
\end{eqnarray*}
\end{df}

\begin{rem}
Notice that the definition of $f_{[a,b]}$ for any $[a,b]\in Par_N\setminus\{(0,0)\}$ does \underline{not} depend on $N$, but only on the values of $a,b$. On the other hand the definition of $f_{[0,0]}^N$ depends on $N$. This is why we put "$N$" on the superscript.
\end{rem}

The general form of the Boolean polynomial $Bav_N^0$ is given by the following:

\begin{thm}
For any $N\geq 3$, we have
\begin{eqnarray}
Bav_N^0=\sum_{(a,b)\in Par_N\setminus\{(0,0)\}}f_{[a,b]}+f_{[0,0]}^N.
\end{eqnarray}
\end{thm}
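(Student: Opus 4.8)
The plan is to show that the two sides of (5.8) coincide as $\mathbf{F}_2$-valued functions on the Boolean space $\mathbf{B}_{N,\pm}$; since a Boolean polynomial is determined by its values (cf. Remark 4.1), equality of polynomials follows. I work throughout with the index set $\mathbf{Z}_{N,\pm}$, writing $\mathbf{v}=(v_j)_{j\in\mathbf{Z}_{N,\pm}}$ for the Boolean vector attached to a point $\mathbf{y}$ via the substitution $v_j=y_j+1$ that undoes the passage $w_i=y_{\varphi_N(i)}$, $v_i=w_i+1$ of Subsection~4.2. First I would pin down the support of the left-hand side. Lemma~5.1 transcribes verbatim to the $\mathbf{Z}_{N,\pm}$-setting: the condition ``$0\in Rav_{\pm}(\cdot)$'' is unchanged under $rot$ (because $Rav$ commutes with $rot$, which merely permutes onsets cyclically) and hence depends only on the underlying onset set, while $\varphi_N$ fixes $0$. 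Consequently $Bav_N^0$ takes the value $1$ at $\mathbf{v}$ exactly when $BtoI(\mathbf{v})$ is an ancestor of zero, i.e. exactly when $\mathbf{v}\in\mathbf{B}\mbox{-}Anc_N=\bigcup_{(a,b)\in Par_N}\mathbf{B}\mbox{-}Anc_{(a,b)}$.

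Next I would identify each summand on the right. Under $v_j=y_j+1$, the product $f_{[a,b]}=(y_a+1)(\prod_{a<k<b}y_k)(y_b+1)$ evaluates to $1$ at $\mathbf{v}$ precisely when $v_a=1$, $v_b=1$, and $v_k=0$ for $a<k<b$; by the equivalence $(1)\Leftrightarrow(3)$ of Proposition~5.4 (equivalently Corollary~5.2), this is exactly the condition $\mathbf{v}\in\mathbf{B}\mbox{-}Anc_{(a,b)}$. Hence $f_{[a,b]}$ is the indicator function of $\mathbf{B}\mbox{-}Anc_{(a,b)}$ for each $(a,b)\in Par_N\setminus\{(0,0)\}$, and similarly $f_{[0,0]}^N=(y_0+1)\prod_{k\neq 0}y_k$ is $1$ exactly when $v_0=1$ and $v_k=0$ for all $k\neq 0$, so it is the indicator of $\mathbf{B}\mbox{-}Anc_{(0,0)}=\{ItoB((0))\}$.

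The heart of the argument, and the step I expect to be the main obstacle, is to prove that the sets $\mathbf{B}\mbox{-}Anc_{(a,b)}$, as $(a,b)$ ranges over $Par_N$, are \emph{pairwise disjoint}: only then does the $\mathbf{F}_2$-sum of their indicator functions equal the indicator of their union, whereas any point lying in an even number of pieces would cancel modulo $2$. For this I would read off from Proposition~5.2 that the associated intervals form the chain
\begin{eqnarray*}
\{0\}\subsetneq [0,1]_{N,\pm}\subsetneq [-1,1]_{N,\pm}\subsetneq [-1,2]_{N,\pm}\subsetneq [-2,2]_{N,\pm}\subsetneq\cdots,
\end{eqnarray*}
totally ordered by inclusion, each member (apart from $\{0\}$) containing $0$ in its interior. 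Suppose $\mathbf{v}\in\mathbf{B}\mbox{-}Anc_{(a,b)}\cap\mathbf{B}\mbox{-}Anc_{(a',b')}$ with distinct pairs; by nestedness one interval strictly contains the other, say $[a,b]_{N,\pm}\subsetneq[a',b']_{N,\pm}$. Strict inclusion forces $a'<a$ or $b<b'$, and since $a\le 0<b'$ and $a'<b$ for every non-$(0,0)$ pair, at least one endpoint of $[a,b]_{N,\pm}$ lies in the interior of $[a',b']_{N,\pm}$; but that endpoint carries $v=1$ (being an endpoint of the smaller interval), contradicting membership in $\mathbf{B}\mbox{-}Anc_{(a',b')}$, all of whose interior coordinates vanish. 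The pair $(0,0)$ is disposed of separately: $\mathbf{B}\mbox{-}Anc_{(0,0)}$ consists of a single weight-one vector, which cannot meet any $\mathbf{B}\mbox{-}Anc_{(a,b)}$ with $a\neq b$.

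With disjointness in hand the conclusion is immediate: at every $\mathbf{v}\in\mathbf{B}_{N,\pm}$ at most one summand on the right of (5.8) is nonzero, so the sum equals $1$ exactly when $\mathbf{v}$ lies in the disjoint union $\mathbf{B}\mbox{-}Anc_N$, which by the first step is precisely the support of $Bav_N^0$. The two sides therefore agree as functions, hence as Boolean polynomials. The only genuinely nontrivial ingredient is the disjointness of the $\mathbf{B}\mbox{-}Anc_{(a,b)}$; the remaining steps amount to bookkeeping with Definition~5.11, Proposition~5.4, and the substitution $v_j=y_j+1$.
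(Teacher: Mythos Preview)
Your proof is correct and follows essentially the same route as the paper's: identify each $f_{[a,b]}$ as the indicator function of $\mathbf{B}\mbox{-}Anc_{(a,b)}$ (via Proposition~5.4/Corollary~5.2 and the substitution $v_j=y_j+1$), identify $Bav_N^0$ as the indicator of their union $\mathbf{B}\mbox{-}Anc_N$ (via Lemma~5.1), and conclude. You are in fact more careful than the paper on the key step---the pairwise disjointness of the $\mathbf{B}\mbox{-}Anc_{(a,b)}$---which the paper simply asserts (in the proof of Theorem~5.1) without argument; your nested-interval observation gives a clean justification of that claim.
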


\noindent
Our proof of this theorem will be given later in the subsection 5.6, after we prepare several results concerning the algebraic standard forms of special Boolean functions.

\subsection{Preliminaries for the proof of Theorem 5.2}
For any $S=\{s_1,\cdots,s_k\}\subset\mathbf{Z}_N$, we denote by $pr_S:\mathbf{F}_2^N\rightarrow\mathbf{F}_2^{|S|}$ the projection defined by 
\begin{eqnarray*}
pr_S(x_0,\cdots,x_{N-1})=(x_{s_1},\cdots,x_{s_k})
\end{eqnarray*}
for any $(x_0,\cdots,x_{N-1})\in\mathbf{F}_2^N$.

\begin{prp}
Let $P=P(x_0,\cdots,x_{N-1}):\mathbf{F}_2^N\rightarrow \mathbf{F}_2$ be a Boolean function with the following property: There exists a subset $S=\{s_1,\cdots,s_k\}\subset \mathbf{Z}_N$ such that
\begin{eqnarray*}
P^{-1}(\{1\})=pr_S^{-1}(\{(b_1,\cdots,b_k)\})
\end{eqnarray*}
for some $(b_1,\cdots,b_k)\in\mathbf{F}_2^k$. Then we have
\begin{eqnarray*}
P(x_0,\cdots,x_{N-1})=x_{s_1}^{(b_1)}x_{s_2}^{(b_2)}\cdots x_{s_k}^{(b_k)},
\end{eqnarray*}
where we put
\begin{eqnarray*}
x_i^{(0)}&=&x_i+1,\\
x_i^{(1)}&=&x_i,
\end{eqnarray*}
for any Boolean variable $x_i, i\in\mathbf{Z}_N$.
\end{prp}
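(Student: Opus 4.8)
The plan is to verify directly that the monomial $x_{s_1}^{(b_1)}\cdots x_{s_k}^{(b_k)}$ has exactly the same $1$-set as $P$; since both are functions into $\mathbf{F}_2=\{0,1\}$, agreement on the $1$-set forces agreement everywhere.

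First I would isolate the single-variable identity that drives everything: for any Boolean variable $x_i$ and any $b\in\mathbf{F}_2$, the literal $x_i^{(b)}$ takes the value $1$ precisely when $x_i=b$. This is a two-case check. When $b=1$ we have $x_i^{(1)}=x_i$, which equals $1$ iff $x_i=1$; when $b=0$ we have $x_i^{(0)}=x_i+1$, and over $\mathbf{F}_2$ this equals $1$ iff $x_i=0$. Hence each $x_i^{(b)}$ is the indicator of the event $\{x_i=b\}$.

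Next I would propagate this through the product. A product of elements of $\mathbf{F}_2$ equals $1$ if and only if every factor equals $1$, so the monomial $x_{s_1}^{(b_1)}\cdots x_{s_k}^{(b_k)}$ evaluates to $1$ at $(x_0,\dots,x_{N-1})$ exactly when $x_{s_j}=b_j$ for all $j\in\{1,\dots,k\}$, the coordinates outside $S$ being unconstrained. By the definition of $pr_S$ this condition is precisely $(x_{s_1},\dots,x_{s_k})=(b_1,\dots,b_k)$, i.e.\ membership in $pr_S^{-1}(\{(b_1,\dots,b_k)\})$. By the hypothesis this set is $P^{-1}(\{1\})$, so the monomial and $P$ share the same $1$-set.

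Finally I would conclude that two $\mathbf{F}_2$-valued functions on $\mathbf{F}_2^N$ which agree on where they equal $1$ also agree on the complementary set where they equal $0$, hence coincide as functions. Since $S=\{s_1,\dots,s_k\}$ is a set, the indices $s_j$ are distinct and the monomial is multilinear, so no reduction modulo the ideal of Remark 4.1 is needed and the displayed equality holds already at the level of the standard polynomial representative. There is no genuine obstacle here; the only point demanding care is the single-variable indicator identity, where one must remember that $x_i^{(0)}=x_i+1$ flips the value correctly only because the arithmetic is modulo $2$.
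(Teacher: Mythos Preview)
Your proof is correct. You verify directly that the monomial $x_{s_1}^{(b_1)}\cdots x_{s_k}^{(b_k)}$ is the indicator function of $pr_S^{-1}(\{(b_1,\dots,b_k)\})$ by reducing to the single-variable identity $x_i^{(b)}=1\Leftrightarrow x_i=b$ and the fact that a product in $\mathbf{F}_2$ is $1$ iff every factor is $1$.

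The paper's proof takes a slightly different route: it starts from the disjunctive normal form algorithm of Subsection~4.1, which a priori produces a term for every point of $P^{-1}(\{1\})$. Writing $T=\{j_1,\dots,j_{N-k}\}=\mathbf{Z}_N\setminus S$, the full DNF factors as
\[
(x_{j_1}+\overline{x_{j_1}})\cdots(x_{j_{N-k}}+\overline{x_{j_{N-k}}})\,x_{s_1}^{(b_1)}\cdots x_{s_k}^{(b_k)},
\]
and the ``free'' variables collapse via $x_{j_t}+\overline{x_{j_t}}=1$. Your argument is more self-contained and avoids invoking the DNF machinery; the paper's argument has the advantage of showing \emph{how} the formula arises from the general construction of Subsection~4.1, rather than verifying a guessed answer after the fact. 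Both are equally valid, and your closing remark that distinctness of the $s_j$ makes the monomial multilinear (so no reduction modulo the ideal of Remark~4.1 is needed) is a nice touch not made explicit in the paper.
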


\begin{proof}
Note that
\begin{eqnarray*}
&&pr_S^{-1}(\{(b_1,\cdots,b_k)\})\\
&&=\{(a_0,\cdots,a_{N-1})\in\mathbf{F}_2^N;(a_{s_1},\cdots,a_{s_k})=(b_1,\cdots,b_k)\}.
\end{eqnarray*}
Hence, denoting the complement $\mathbf{Z}_N\setminus S$ by $T=\{j_1,\cdots,j_{N-k}\}$, we see that
\begin{eqnarray*}
P(x_0,\cdots,x_{N-1})=(x_{j_1}+\overline{x_{j_1}})\cdots(x_{j_{N-k}}+\overline{x_{j_{N-k}}})x_{s_1}^{(b_1)}x_{s_2}^{(b_2)}\cdots x_{s_k}^{(b_k)}
\end{eqnarray*}
by the algorithm explained in the subsection 4.1. Since $x_{j_t}+\overline{x_{j_t}}=1$ holds for any $t\in[1,N-k]$, we finish the proof.
\end{proof}

We also need an equivalent form of the above proposition, where we change the set of indices $\mathbf{Z}_N$ to $\mathbf{Z}_{N,\pm}$. Furthermore, since we have observed in the Table 4.5 that the Boolean variable "$y=x+1$" is superior to the original $x$ for our study, we employ the negated variables as our basic building blocks. The following proposition is a direct consequence of Proposition 5.5:

\begin{prp}
Let $P=P(y_{\ell},\cdots,y_g):\mathbf{F}_2^N\rightarrow \mathbf{F}_2$ be a Boolean function with the following property: There exists a subset $S=\{s_1,\cdots,s_k\}\subset \mathbf{Z}_{N,\pm}$ such that
\begin{eqnarray*}
P^{-1}(\{1\})=pr_S^{-1}(\{(b_1,\cdots,b_k)\})
\end{eqnarray*}
holds for some$(b_1,\cdots,b_k)\in\mathbf{F}_2^k$. Then we have
\begin{eqnarray*}
P(y_{\ell},\cdots,y_g)=y_{s_1}^{(b_1)}y_{s_2}^{(b_2)}\cdots y_{s_k}^{(b_k)},
\end{eqnarray*}
where we put
\begin{eqnarray*}
y_i^{(0)}&=&y_i,\\
y_i^{(1)}&=&y_i+1,
\end{eqnarray*}
for any Boolean variable $y_i, i\in\mathbf{Z}_{N,\pm}$.
\end{prp}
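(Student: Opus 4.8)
The plan is to obtain Proposition 5.6 from Proposition 5.5 by two purely notational moves — a relabeling of the index set and a negation of the variables — so that the only substantive content is the bookkeeping of the two different conventions for the superscript $(\cdot)$, which are rigged precisely to absorb the negation.

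First I would note that the proof of Proposition 5.5 never uses the particular shape of $\mathbf{Z}_N$: it only uses that $S=\{s_1,\dots,s_k\}$ and its complement $T=\mathbf{Z}_N\setminus S$ partition the index set, together with the tautology $x_{j_t}+\overline{x_{j_t}}=1$. Hence Proposition 5.5 holds verbatim with $\mathbf{Z}_{N,\pm}$ in place of $\mathbf{Z}_N$; equivalently, one transports it along the bijection $\varphi_N\colon\mathbf{Z}_N\to\mathbf{Z}_{N,\pm}$ of Definition 4.2, which merely renames coordinates. Writing $x_i$ $(i\in\mathbf{Z}_{N,\pm})$ for the coordinate functions on $\mathbf{F}_2^N$ and retaining the convention $x_i^{(0)}=x_i+1$, $x_i^{(1)}=x_i$ of Proposition 5.5, the hypothesis $P^{-1}(\{1\})=pr_S^{-1}(\{(b_1,\dots,b_k)\})$ yields
\[
P=x_{s_1}^{(b_1)}x_{s_2}^{(b_2)}\cdots x_{s_k}^{(b_k)}.
\]

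Next I would pass to the negated building blocks $y_i=x_i+1$ that Table 4.5 singles out. The heart of the matter is the one-variable identity
\[
x_i^{(0)}=x_i+1=y_i=y_i^{(0)},\qquad x_i^{(1)}=x_i=y_i+1=y_i^{(1)},
\]
where the rightmost expressions invoke the convention $y_i^{(0)}=y_i$, $y_i^{(1)}=y_i+1$ of Proposition 5.6. Thus $x_{s_j}^{(b_j)}=y_{s_j}^{(b_j)}$ as elements of the Boolean polynomial ring for every $j$ and every $b_j\in\{0,1\}$: the flipped convention of Proposition 5.6 exactly cancels the negation. Substituting factor by factor into the product above gives $P=y_{s_1}^{(b_1)}\cdots y_{s_k}^{(b_k)}$, which is the assertion.

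The step carrying the whole weight is the convention check in the second display; there is no induction, no case analysis on $N$, and no appeal to the combinatorics of $Par_N$. Consequently the only way to go astray is to mismatch the two superscript conventions, or to conflate the role of the domain coordinates $x_i$ — on which the support condition $pr_S$ is read, so that Proposition 5.5 may legitimately be applied — with the negated variables $y_i$ in which the output polynomial is finally expressed. I would therefore make this distinction explicit before invoking Proposition 5.5, and only afterwards rewrite the resulting monomial in the $y_i$.
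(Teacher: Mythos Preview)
Your proposal is correct and follows the paper's approach: the paper gives no explicit proof, stating only that Proposition~5.6 is ``a direct consequence of Proposition~5.5,'' and your argument spells out precisely that deduction---transport the index set to $\mathbf{Z}_{N,\pm}$, apply Proposition~5.5 in the un-negated coordinates $x_i$, and then observe that under $y_i=x_i+1$ the flipped superscript convention $y_i^{(0)}=y_i$, $y_i^{(1)}=y_i+1$ satisfies $x_i^{(b)}=y_i^{(b)}$ identically. Your care in distinguishing the domain coordinates (on which $pr_S$ acts) from the negated output variables is exactly the point that makes the ``direct consequence'' work.
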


\subsection{Ancestors of zero when $N=6$}
Before we give a proof of Theorem 5.2, we examine the case when $N=6$. This might help the reader to understand our proof for general cases. \\

\newpage
\noindent
The set $Par_6$ of parental pairs of zero has been found in Proposition 5.2 as follows:
\begin{eqnarray*}
Par_6=\{(0,0),(0,1),(-1,1),(-1,2),(-2,2),(-2,3)\}.
\end{eqnarray*}
The figure below illustrates the ancestors of zero when $N=6$:

\noindent
\begin{center}
\begin{figure}[h]
    \begin{tabular}{ccc}
       \begin{minipage}[t]{0.2\hsize}
        \centering
        \includegraphics[scale=0.3]{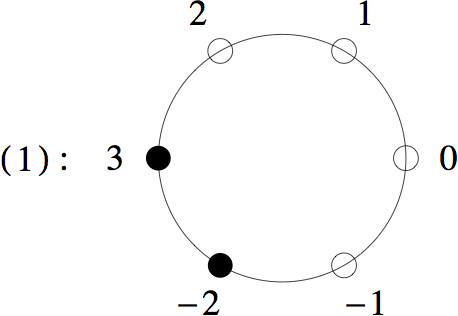}

       \end{minipage} &
       \begin{minipage}[t]{0.2\hsize}
        \centering
        \includegraphics[scale=0.3]{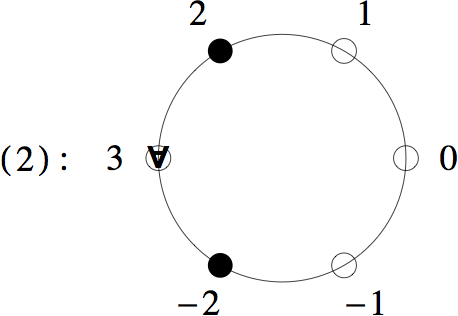}
        \end{minipage} &      
      \begin{minipage}[t]{0.2\hsize}
        \centering
        \includegraphics[scale=0.3]{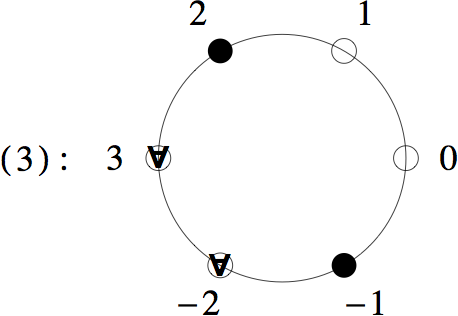}
        \end{minipage}

     \end{tabular}
  \end{figure}
  
  \begin{figure}[h]
    \begin{tabular}{ccc}
      \begin{minipage}[t]{0.2\hsize}
        \centering
        \includegraphics[scale=0.3]{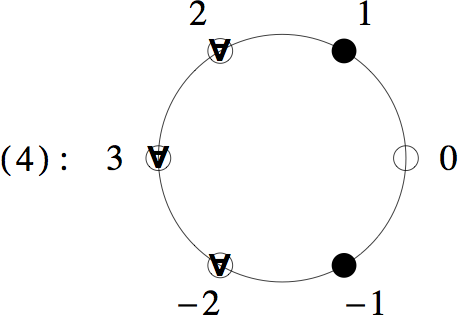}

       \end{minipage} &
      \begin{minipage}[t]{0.2\hsize}
        \centering
        \includegraphics[scale=0.3]{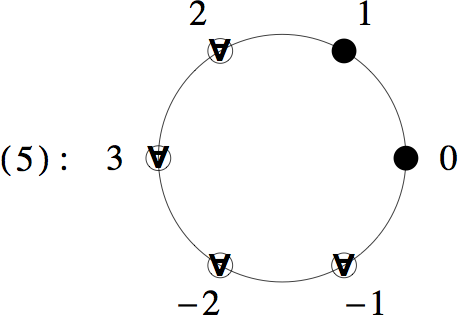}
      \end{minipage} &      
      \begin{minipage}[t]{0.2\hsize}
        \centering
        \includegraphics[scale=0.3]{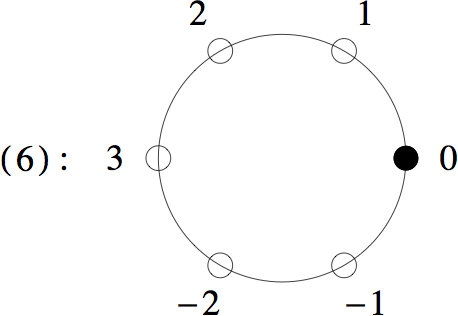}
        \label{ラベル3}
      \end{minipage}

    \end{tabular}
  \end{figure}
\end{center}
\hspace{20mm}Fig. 5.1. Ancestors of zero for $N=6$\\\\

\noindent
Here are six families (1)-(6) of ancestors of 0, which are determined in Proposition 5.2 and Corollary 5.2. Each of six large circles represents the unit circle on the complex plane, and each member $k$ of $\mathbf{Z}_{6,\pm}$ are placed at the point $\exp(k\pi i/3)$. The meanings of symbols in the figure are as follows:\\

\noindent
(1) The numbers surrounding the unit circle specify the names of the elements in $\mathbf{Z}_{6,\pm}$.\\
(2) Black dots represent the parental pairs of zero found in Proposition 5.2. \\
(3) Each of small blank circle signifies that the corresponding element does not appear in the ancestors. \\
(4) Each of circles having "$\forall$" in its interior means that the corresponding element may or may not be present in the ancestors. \\

\noindent
We should be careful about the rather exceptional case when $n=1$, which is depicted as "(6)". Thus the $\mathbf{B}$-ancestors of zero which correspond to the six figures are given by the following list:
\begin{eqnarray*}
\begin{array}{lll}
\mbox{name} & \mbox{$\mathbf{B}$-ancestors of zero in }\mathbf{B}_{6,\pm} & \mbox{number of elements} \\
\hline
(1) & \{e_{[-2,3]}\} & 1 \\
(2) & pr_{[-2,2]}^{-1}(\{e_{[-2,2]}\}) & 2\\
(3) &  pr_{[-1,2]}^{-1}(\{e_{[-1,2]}\}) & 4 \\
(4) &  pr_{[-1,1]}^{-1}(\{e_{[-1,1]}\}) & 8 \\
(5) &  pr_{[0,1]}^{-1}(\{e_{[0,1]}\}) & 16 \\
(6) & \{(0,0,1,0,0,0)\} & 1\\
Total & & 32(=2^{6-1}) \\
\end{array}
\end{eqnarray*}
\begin{center}
Table 5.2. Six families of the $\mathbf{B}$-ancestors of zero for $N=6$
\end{center}

\noindent
Let $S_i$ denote the subset of $\mathbf{B}_{6,\pm}$ which is specified in the $i$-th row $(1\leq i\leq 6)$, and let $f_{S_i}$ be the Boolean function which takes the value 1 (resp. 0) at every element of $S_i$ (resp. $\mathbf{B}_{6,\pm}\setminus S_i)$. Then it follows from Proposition 5.6 and Definition 5.11 that
\begin{eqnarray*}
f_{S_1}&=&f_{[-2,3]}=(y_{-2}+1)y_{-1}y_0y_1y_2(y_3+1), \\
f_{S_2}&=&f_{[-2,2]}=(y_{-2}+1)y_{-1}y_0y_1(y_2+1), \\
f_{S_3}&=&f_{[-1,2]}=(y_{-1}+1)y_0y_1(y_2+1), \\
f_{S_4}&=&f_{[-1,1]}=(y_{-1}+1)y_0(y_1+1), \\
f_{S_5}&=&f_{[0,1]}=(y_0+1)(y_1+1), \\
f_{S_6}&=&f_{[0,0]}^6=(y_0+1)y_{-2}y_{-1}y_1y_2y_3.
\end{eqnarray*}
By summing these up, we have
\begin{eqnarray*}
Bav_0^6=\sum_{(a,b)\in Par_6\setminus\{(0,0)\}}f_{[a,b]}+f_{[0,0]}^6,
\end{eqnarray*}
which shows the validity of (5.6) of Theorem 5.2 when $N=6$. Furthermore the total of the numbers of ancestors in the lines (1)-(6) is equal to 32, which is exactly the half of the number 64 of Boolean vectors in $\mathbf{B}_{6,\pm}$. Therefore $Bav_6^0$ is balanced as is assured by Theorem 5.1.

\subsection{Proof of Theorem 5.2}
For any subset $U\subset\mathbf{B}_{N,\pm}$, we denote by $P_U$ the polynomial function on $\mathbf{B}_{N,\pm}$ such that
\begin{eqnarray*}
P_U^{-1}(\{1\})=U.
\end{eqnarray*}
Namely $P_U$ takes the value 1 on $U$ and the value 0 outside $U$. Recall that we have $\mathbf{B}$-$Anc_N(a,b)=pr_{[a,b]}^{-1}(\{e_{[a,b]}\})$ for any $(a,b)\in Par_N\setminus\{(0,0)\}$ by Corollary 5.2. This implies by Proposition 5.6 and definition 5.11 that
\begin{eqnarray*}
P_{\mathbf{B}-Anc_N(a,b)}=f_{[a,b]}.
\end{eqnarray*}
The remaining parental pair $(0,0)\in Par_N$ corresponds to the one-element rhythm $(0)\in\mathbf{R}_N^1$ (see Remark 5.1). Since $RtoB((0))=(v_i)_{i\in\mathbf{Z}_{N,\pm}}$, where
\begin{eqnarray*}
v_i=
\left\{
\begin{array}{ll}
1, & \mbox{ if }i=0,\\
0, & \mbox{ if }i\neq 0,\\
\end{array}
\right.
\end{eqnarray*}
we have
\begin{eqnarray*}
P_{\mathbf{B}-Anc_N(0,0)}=f_{[0,0]}.
\end{eqnarray*}
Thus we obtain the equality (5.8). This completes the proof of Theorem 5.2.\\

\noindent
We check the validity of Theorem 5.2 by our computational results in Subsection 4.2.\\

\noindent
Example 5.1. The case when $N=3$: The equality (5.8) in Theorem 5.2 asserts that
\begin{eqnarray*}
Bav_3^0=\sum_{(a,b)\in Par_3\setminus\{(0,0)\}}f_{[a,b]}+f_{[0,0]}^3.
\end{eqnarray*}
Recall that
\begin{eqnarray*}
Par_3=\{(0,0),(0,1),(-1,1)\}
\end{eqnarray*}
by Proposition 5.2, and the corresponding polynomials $f_{[a,b]}$, $(a,b)\in Par_3\setminus \{(0,0)\}$ and $f_{[0,0]}^3$ are defined in Definition 5.11 as follows:
\begin{eqnarray*}
f_{[0,1]}&=&(y_0+1)(y_1+1),\\
f_{[-1,1]}&=&(y_{-1}+1)y_0(y_1+1),\\
f_{[0,0]}^3&=&(y_0+1)y_{-1}y_1.
\end{eqnarray*}
Hence we have
\begin{eqnarray*}
Bav_3^0&=&(y_0+1)(y_1+1)+(y_{-1}+1)y_0(y_1+1)+(y_0+1)y_{-1}y_1\\
&=&(1+\underline{y_0}+y_1+\underline{y_0y_1})+y_0(\underline{1}+y_{-1}+\underline{y_1}+\underline{y_{-1}y_1})+y_{-1}(\underline{y_0}+1)y_1\\
&=&1+y_1+y_{-1}y_0+y_{-1}y_1,\\
&&\hspace{30mm}(\mbox{by cancelling the underlined terms})
\end{eqnarray*}
and we see that the rightmost side coincides with Table 4.5 for $N=3$.\\

\noindent
Example 5.2. The case when $N=4$: By Theorem 5.2, Proposition 5.2, and Definition 5.11, we can compute as follows:
\begin{eqnarray*}
Bav_4^0&=&\sum_{(a,b)\in Par_4\setminus\{(0,0)\}}f_{[a,b]}+f_{[0,0]}^4 \\
&=&f_{[0,1]}+f_{[-1,1]}+f_{[-1,2]}+f_{[0,0]}^4\\
&=&(y_0+1)(y_1+1)+(y_{-1}+1)y_0(y_1+1)+(y_{-1}+1)y_0y_1(y_2+1)\\
&&\hspace{5mm}+(y_0+1)y_{-1}y_1y_2\\
&=&(1+\underline{y_0}+y_1+\underline{y_0y_1})+y_0(\underline{1}+y_{-1}+\underline{y_1}+\underline{y_{-1}y_1})\\
&&+y_0y_1(1+\underline{y_{-1}}+y_2+\underline{y_{-1}y_2}))+(\underline{y_0}+1)y_{-1}y_1y_2\\
&=&1+y_1+y_{-1}y_0+y_0y_1+y_0y_1y_2+y_{-1}y_1y_2.\\
&&\hspace{30mm}(\mbox{by cancelling the underlined terms})
\end{eqnarray*}
The rightmost side coincides with Table 4.5 for $N=4$.

\subsection{Recurrence formula}
Inspecting the shapes of $Bav_N^0$  found in Theorem 5.2, we are led naturally to the following:

\begin{thm}
When $N$ is even and $N=2m$, we have
\begin{eqnarray}
Bav_{2m}^0=Bav_{2m-1}^0+y_{-m+2}\cdots y_{-1}y_1\cdots y_{m-1}(y_m+1)(y_0+y_{-m+1}).
\end{eqnarray}
When $N$ is odd and $N=2m+1$, we have
\begin{eqnarray}
Bav_{2m+1}^0=Bav_{2m}^0+y_{-m+1}\cdots y_{-1}y_1\cdots y_{m-1}(y_{-m}+1)(y_0+y_{m}).
\end{eqnarray}
\end{thm}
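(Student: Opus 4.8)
The plan is to read off both $Bav_N^0$ and $Bav_{N-1}^0$ from Theorem 5.2 and compare them, working throughout over $\mathbf{F}_2$ so that subtraction coincides with addition. The decisive simplification is Remark 5.4: for $(a,b)\neq(0,0)$ the polynomial $f_{[a,b]}$ depends only on the integers $a,b$ and not on $N$. Hence, after reinterpreting $Bav_{N-1}^0$ as a polynomial in the variables indexed by the larger set $\mathbf{Z}_{N,\pm}$, every $f_{[a,b]}$ attached to a parental pair common to $Par_{N-1}$ and $Par_N$ cancels against its twin. What survives is
\[
Bav_N^0+Bav_{N-1}^0=f_{\mathrm{new}}+f_{[0,0]}^N+f_{[0,0]}^{N-1},
\]
where $f_{\mathrm{new}}$ is the polynomial $f_{[a,b]}$ attached to the unique parental pair that appears in $Par_N$ but not in $Par_{N-1}$. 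The theorem then follows by rewriting this as $Bav_N^0=Bav_{N-1}^0+\bigl(f_{\mathrm{new}}+f_{[0,0]}^N+f_{[0,0]}^{N-1}\bigr)$ and simplifying the parenthesized part.

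First I would pin down the combinatorial change in $Par_N$ and in the index set $\mathbf{Z}_{N,\pm}$, splitting into the two parities exactly as in the statement. Using Proposition 5.2 one checks that passing from $N-1=2m-1$ to $N=2m$ leaves $Par_N^0$ unchanged and enlarges $Par_N^1$ by the single pair $(-m+1,m)$, while $\mathbf{Z}_{N,\pm}$ acquires the new greatest element $m$; dually, passing from $N-1=2m$ to $N=2m+1$ leaves $Par_N^1$ unchanged and enlarges $Par_N^0$ by the single pair $(-m,m)$, while $\mathbf{Z}_{N,\pm}$ acquires the new least element $-m$. Thus $f_{\mathrm{new}}$ is $f_{[-m+1,m]}$ in the even case and $f_{[-m,m]}$ in the odd case, and the growth of the index set by one element is precisely what distinguishes $f_{[0,0]}^N$ from $f_{[0,0]}^{N-1}$: the former equals the latter times the single new variable $y_m$ (even) or $y_{-m}$ (odd).

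The remaining step is a short $\mathbf{F}_2$-identity collapsing the three surviving terms into the stated product. In the even case, writing $P=y_{-m+2}\cdots y_{-1}y_1\cdots y_{m-1}$ for the product over the nonzero indices strictly between $-m+1$ and $m$, one has $f_{[-m+1,m]}=(y_{-m+1}+1)\,y_0\,P\,(y_m+1)$ and $f_{[0,0]}^{2m}+f_{[0,0]}^{2m-1}=(y_0+1)\,y_{-m+1}\,P\,(y_m+1)$; adding them factors out $P(y_m+1)$ and leaves the bracket $(y_{-m+1}+1)y_0+(y_0+1)y_{-m+1}$, whose mixed monomial $y_{-m+1}y_0$ occurs twice and cancels, giving exactly $y_0+y_{-m+1}$. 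This is (5.9). The odd case runs identically with $P'=y_{-m+1}\cdots y_{-1}y_1\cdots y_{m-1}$, the common factor now being $P'(y_{-m}+1)$ and the surviving bracket $y_0+y_m$, yielding (5.10).

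I expect the only real care to lie in the bookkeeping of the second paragraph, namely correctly matching the parity of $N$ to which of $Par_N^0,Par_N^1$ absorbs the new pair and to which end of $\mathbf{Z}_{N,\pm}$ the new index is adjoined; the final cancellation in the third paragraph is a one-line consequence of $x+x=0$ in $\mathbf{F}_2$. A secondary point worth stating explicitly is that the comparison is legitimate only after reinterpreting $Bav_{N-1}^0$ in the enlarged variable set, which is harmless because no cancellation among the common $f_{[a,b]}$ ever involves the newly adjoined variable.
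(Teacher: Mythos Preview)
Your proposal is correct and follows essentially the same approach as the paper's own proof: both identify the single new parental pair in $Par_N\setminus Par_{N-1}$ via Proposition~5.2, use Theorem~5.2 together with the $N$-independence of $f_{[a,b]}$ for $(a,b)\neq(0,0)$ to reduce the difference to $f_{\mathrm{new}}+f_{[0,0]}^N+f_{[0,0]}^{N-1}$, and then perform the same factor-and-cancel computation over $\mathbf{F}_2$. The paper's proof is slightly more explicit in writing out the products, but the logic and the key cancellation $(y_{-m+1}+1)y_0+(y_0+1)y_{-m+1}=y_0+y_{-m+1}$ (and its odd-case analogue) are identical.
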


\begin{proof}
It follows from Proposition 5.2 that
\begin{eqnarray*}
Par_N\subset Par_{N+1}
\end{eqnarray*}
holds for any $N\geq 3$. Hence we have only to consider their difference. We divide the proof into two parts according to the parity of $N$.\\

\noindent
1) The case when $N=2m\geq 4$. It follows from Proposition 5.2 that
\begin{eqnarray*}
Par_{2m}\setminus Par_{2m-1}=\{[-m+1,m]\}.
\end{eqnarray*}
Before we start our computation, for the convenience of the reader, we recall what members are in $\mathbf{Z}_{2m,\pm}$ or in $\mathbf{Z}_{2m-1,\pm}$:
\begin{eqnarray*}
\mathbf{Z}_{2m,\pm}&=&\{x\in\mathbf{Z};-m+1\leq x\leq m\},\\
\mathbf{Z}_{2m-1,\pm}&=&\{x\in\mathbf{Z};-m+1\leq x\leq m-1\}.
\end{eqnarray*}
Keeping in mind the fact that $-1=1$ in $\mathbf{F}_2$, we can compute as follows:
\begin{eqnarray*}
&&Bav_{2m}^0-Bav_{2m-1}^0\\
&=&f_{[-m+1,m]}+f_{[0,0]}^{2m}-f_{[0,0]}^{2m-1}\\
&=&(y_{-m+1}+1)\left(\prod_{k=-m+2}^{m-1}y_k\right)(y_m+1)\\
&&+\left((y_0+1)\prod_{k\in\mathbf{Z}_{2m,\pm}\setminus \{0\}}y_k-(y_0+1)\prod_{k\in\mathbf{Z}_{2m-1,\pm}\setminus \{0\}}y_k\right)\\
&=&(y_{-m+1}+1)\left(\prod_{k=-m+2}^{m-1}y_k\right)(y_m+1)\\
&&\hspace{5mm}+(y_0+1)\left(\prod_{k\in\mathbf{Z}_{2m-1,\pm}\setminus \{0\}}y_k\right)(y_{m}-1)\\
&=&y_{-m+2}\cdots y_{-1}y_1\cdots y_{m-1}\\
&&\times\left((y_{-m+1}+1)y_0(y_m+1)+y_{-m+1}(y_0+1)(y_m+1)\right)\\
&=&y_{-m+2}\cdots y_{-1}y_1\cdots y_{m-1}(y_m+1)\\
&&\times\left((y_{-m+1}+1)y_0+y_{-m+1}(y_0+1)\right)\\
&=&y_{-m+2}\cdots y_{-1}y_1\cdots y_{m-1}(y_m+1)(y_0+y_{-m+1}).
\end{eqnarray*}
This shows the validity of (5.9).\\

\noindent
2) The case when $N=2m+1\geq 5$. It follows Proposition 5.2 that
\begin{eqnarray*}
Par_{2m+1}\setminus Par_{2m}=\{[-m,m]\}.
\end{eqnarray*}
This time, the members of $\mathbf{Z}_{2m+1,\pm}$ and $\mathbf{Z}_{2m,\pm}$ are given by
\begin{eqnarray*}
\mathbf{Z}_{2m+1,\pm}&=&\{x\in\mathbf{Z};-m\leq x\leq m\},\\
\mathbf{Z}_{2m,\pm}&=&\{x\in\mathbf{Z};-m+1\leq x\leq m\}.
\end{eqnarray*}
Hence we can compute as follows:
\begin{eqnarray*}
&&Bav_{2m+1}^0-Bav_{2m}^0\\
&=&f_{[-m,m]}+f_{[0,0]}^{2m+1}-f_{[0,0]}^{2m}\\
&=&(y_{-m}+1)\left(\prod_{k=-m+1}^{m-1}y_k\right)(y_m+1)\\
&&+\left((y_0+1)\prod_{k\in\mathbf{Z}_{2m+1,\pm}\setminus \{0\}}y_k-(y_0+1)\prod_{k\in\mathbf{Z}_{2m,\pm}\setminus \{0\}}y_k\right)\\
&=&(y_{-m}+1)\left(\prod_{k=-m+1}^{m-1}y_k\right)(y_m+1)\\
&&+(y_0+1)\left(\prod_{k\in\mathbf{Z}_{2m,\pm}\setminus \{0\}}y_k\right)(y_{-m}-1)\\
&=&y_{-m+1}\cdots y_{-1}y_1\cdots y_{m-1}\\
&&\times\left((y_{-m}+1)y_0(y_m+1)+(y_{-m}+1)(y_0+1)y_m\right)\\
&=&y_{-m+1}\cdots y_{-1}y_1\cdots y_{m-1}(y_{-m}+1)\\
&&\times\left(y_0(y_m+1)+(y_0+1)y_m\right)\\
&=&y_{-m+1}\cdots y_{-1}y_1\cdots y_{m-1}(y_{-m}+1)(y_0+y_m).
\end{eqnarray*}
This shows the validity of (5.10). This completes the proof.
\end{proof}

We have displayed in Table 4.5 the rhythm polynomials $Bav_N^0$ for $N=3,\cdots,6$. We reexamine the results through Theorem 5.3.\\

\noindent
Example 5.3. The case when $N=4$. It follows from (5.9) with $m=2$ that
\begin{eqnarray*}
Bav_4^0&=&Bav_3^0+y_1(y_2+1)(y_0+y_{-1})\\
&=&(1+y_1+y_{-1}y_0+\underline{y_{-1}y_1})+y_1(y_{-1}y_2+y_0y_2+\underline{y_{-1}}+y_0)\\
&=&1+y_1+y_{-1}y_0+y_0y_1+y_{-1}y_1y_2+y_0y_1y_2,\\
&&\hspace{30mm}(\mbox{by canceling the underlined terms})
\end{eqnarray*}
where the last polynomial coincides with the result in Table 4.5 for $N=4$.\\

\noindent
Example 5.4. The case when $N=5$. It follows from (5.10) with $m=2$ that
\begin{eqnarray*}
Bav_5^0&=&Bav_4^0+y_{-1}y_1(y_{-2}+1)(y_0+y_2)\\
&=&(1+y_1+y_{-1}y_0+y_0y_1+\underline{y_{-1}y_1y_2}+y_0y_1y_2)\\
&&\hspace{20mm}+y_{-1}y_1(y_{-2}y_0+y_{-2}y_2+y_0+\underline{y_2})\\
&=&1+y_1+y_{-1}y_0+y_0y_1+y_{-1}y_0y_1+y_0y_1y_2\\
&&\hspace{20mm}+y_{-2}y_{-1}y_0y_1+y_{-2}y_{-1}y_1y_2,\\
&&\hspace{30mm}(\mbox{by canceling the underlined terms})
\end{eqnarray*}
which coincides with the result in Table 4.5 for $N=5$.\\\\

\noindent
{\bf References}\\
\noindent
$[1]$ Cusick, T. W., St\u{a}nic\u{a}, {\it Cryptographic Boolean Functions and Applications}. Elsevier/Academic Press, Amsterdam, the Netherlands, 2009.\\
$[2]$ Hazama, F., Iterative method of construction for amooth rhythms.  Journal of Mathematics and Music(2021), On-Line-First, 1-20,\\
https://doi.org/10.1080/17459737.2021.1924303\\
$[3]$ MacWilliams, F. J., Sloane, N. J. A., {\it The Theory of Error-Correcting Codes}. Elsevier/NorthHolland, Amsterdam, the Netherlands, 1977. \\
$[4]$ Whitesitt, J. E., {\it Boolean Algebra and Its Application}, Dover Bookds on Computer science, 2010.\\

\end{document}